\newtheorem{thm}{Theorem}[section]
\newtheorem{prop}[thm]{Proposition}
\newtheorem{lem}[thm]{Lemma}
\newtheorem{cor}[thm]{Corollary}
\theoremstyle{definition}
\newtheorem{defn}[thm]{Definition}
\newtheorem{rem}[thm]{Remark}
\newtheorem{prob}[thm]{Problem}
\newcommand{\abs}[1]{\lvert{#1}\rvert}
\newcommand{\bigabs}[1]{\bigl|{#1}\bigr|}
\renewcommand{\bar}[1]{\overline{#1}}
\newcommand{\bigset}[2]{ \bigl\{ \, {#1} \bigm| {#2} \, \bigr\} }
\renewcommand{\emptyset}{\varnothing}
\renewcommand{\setminus}{-}
\newcommand{\field}[1]{\mathbb{#1}}
\newcommand{\Z}{\field{Z}}
\newcommand{\R}{\field{R}}
\newcommand{\N}{\field{N}}
\DeclareMathOperator{\CAT}{CAT}
\newcommand{\showcomments}{yes}
\newsavebox{\commentbox}
\begin{document}

\title{Subgroup distortion of 3-manifold groups}

\author{Hoang Thanh Nguyen}
\address{Beijing International Center for Mathematical Research\\
Peking University\\
 Beijing 100871, China
P.R.}
\email{htnguyen.dn.vn@outlook.com}

\author{Hongbin Sun}
\address{Department of Mathematics\\
Rutgers University-New Brunswick\\
Hill center, Busch Campus\\
Piscataway, NJ 08854\\
USA}
\email{hongbin.sun@rutgers.edu}

\date{\today}

\begin{abstract}
In this paper, we compute the subgroup distortion of all finitely generated subgroups of all finitely generated $3$--manifold groups, and the subgroup distortion in this case can only be linear, quadratic, exponential and double exponential. It turns out that the subgroup distortion of a subgroup of a $3$-manifold group is closely related to the separability of this subgroup.
\end{abstract}


\maketitle
\section{Introduction}
 A finitely generated group $G$ can be considered as a metric space when we equip the group with the word metric from a finite generating set. 
 Gromov has been successful in promoting this idea to study finitely generated
groups. With different finite generating sets on $G$, we have different metrics on $G$. However, such metric spaces are unique up to the quasi-isometric equivalence. Here, a map between two metric spaces is called a quasi-isometry if it is coarsely bi-Lipschitz and coarsely surjective.

A basic goal in geometric group theory that is proposed by Gromov is to quantifies the failure of the coarsely bi-Lipschitz property (called \emph{distortion}) of the inclusion $H \to G$ of
a finitely generated subgroup $H$ in a finitely generated group $G$.
 More precisely, let $\mathcal{S}$ and $\mathcal{A}$ be finite generating sets of $G$ and $H$ respectively. The \emph{distortion} of $H$ in $G$ is the function
\[
   \Delta_{H}^G(n)
     = \max \bigset{\abs{h}_{\mathcal{A}}}{\text{$h\in{H}$ and $\abs{h}_{\mathcal{S}}\leq{n}$} }
\]
Up to a natural equivalence, the function $\Delta_{H}^{G}$ does not depend on the choice of finite generating sets $\mathcal{S}$ and $\mathcal{A}$.  

In the 3-manifold theory, study fundamental groups of 3-manifolds is one of the most central topic, thus computing subgroup distortion in finitely generated 3-manifold groups is a natural task. In this paper, we consider the case that the group $G$ is an arbitrary finitely generated 3-manifold group, and compute subgroup distortion of all finitely generated subgroups of $G$.
\begin{prob}
\label{ques:naturalquestion}
Let $M$ be a 3-manifold with finitely generated fundamental group, and let $H$ be a finitely generated subgroup of $\pi_1(M)$. What is the distortion of $H$ in $\pi_1(M)$? How is this coarse geometric property related to algebraic properties of $H < \pi_1(M)$?
\end{prob}

The answer to Problem~\ref{ques:naturalquestion} is relatively well-understood when the manifold is geometric. If $M$ is a hyperbolic 3-manifold with empty or tori boundary, then any finitely generated subgroup $H$ is either linearly distorted or exponentially distorted in $\pi_1(M)$, by the Covering Theorem (see \cite{Can96}) and the Subgroup Tameness Theorem (see \cite{Agol04}, \cite{CG06}). If $M$ has a geometric structure modelled on $S^{3}$, $\R^{3}$ or $S^{2} \times \R$,  then $H$ is undistorted in $\pi_1(M)$ since $\pi_1(M)$ is virtually abelian. Subgroup distortion of the fundamental group of a 3-manifold supporting the Nil geometry is either linear or quadratic by the work of Osin (see \cite{Osin01}). Our contribution is to compute the subgroup distortion of a 3-manifold $M$ that has the geometry of Sol, $\mathbb{H}^{2} \times \R$ or $\widetilde{SL(2,\R)}$. These results are easy to prove, and may not be surprising to experts, but we can not find them in literature.

\begin{prop}[Subgroup distortion in geometric 3-manifolds]
\label{prop:distgeome}
Let $M$ be a geometric 3-manifold with empty or tori boundary, and let $H$ be a finitely generated subgroup of $\pi_1(M)$.  Let $\Delta$ be the distortion of $H$ in $\pi_1(M)$. Then the following statement holds:
\begin{enumerate}
    \item If the geometry of $M$ is either $S^3$, $\mathbb{E}^3$, $S^2 \times \R$, $\widetilde{SL(2,\R)}$ or $\mathbb{H}^{2} \times \R$ then $\Delta$ is linear.
    \item If the geometry of $M$ is Nil, $M$ is a Seifert $3$-manifold. Then $\Delta$ is quadratic if $H$ is an infinite index subgroup that intersects with the Seifert fiber subgroup of $\pi_1(M)$ nontrivially; otherwise $\Delta$ is linear.
    \item If the geometry of $M$ is $\mathbb{H}^3$, then $\Delta$ is linear if $H$ is geometrically finite, and is exponential if $H$ is a virtual fiber subgroup (i.e. geometrically infinite).
    \item If $M$ has the geometry of Sol, then $M$ is a torus bundle over a $1$-dimensional orbifold (an interval with reflection boundary points or a circle). Then $\Delta$ is exponential if $H$ is an infinite subgroup of the fiber subgroup of $\pi_1(M)$, otherwise it is linear.
\end{enumerate}
\end{prop}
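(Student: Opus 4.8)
The plan is to handle the four geometries essentially independently, using throughout two standard reductions: subgroup distortion is unchanged up to equivalence when $H$ and $\pi_1(M)$ are each replaced by a finite-index subgroup, so I may pass to any convenient finite cover; and whenever there is a Lipschitz retraction or projection $\pi_1(M)\to Q$, undistortedness of the image of $H$ in $Q$ can be transported back to $H$. For part (1), the geometries $S^3$, $\E^3$ and $S^2\times\R$ are immediate because $\pi_1(M)$ is then virtually abelian, and finitely generated subgroups of virtually abelian groups are undistorted. For $\widetilde{SL(2,\R)}$ and $\HH^2\times\R$ the manifold is Seifert fibered over a hyperbolic $2$-orbifold $O$, giving a central extension $1\to\Z\to\pi_1(M)\xrightarrow{p}\pi_1^{\mathrm{orb}}(O)\to 1$ with $\pi_1^{\mathrm{orb}}(O)$ word-hyperbolic. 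First I would record that the Seifert fiber subgroup $\Z$ is undistorted in $\pi_1(M)$ (clear for $\HH^2\times\R$, and inherited by $\widetilde{SL(2,\R)}$ via the quasi-isometry between the two geometries), and that every finitely generated $\bar H\le\pi_1^{\mathrm{orb}}(O)$ is quasiconvex, hence undistorted: a finite-index such subgroup is again a hyperbolic $2$-orbifold group, and an infinite-index finitely generated one is free and convex cocompact. Then for $H\le\pi_1(M)$ finitely generated, set $H_0=H\cap\Z$ and $\bar H=p(H)$; using that $p$ is Lipschitz and $\bar H$ is undistorted, any $h\in H$ of $\pi_1(M)$-length $n$ can be written $h=h'z_0$ with $h'$ a word of length $O(n)$ in the generators of $H$ and $z_0\in H_0$, whence $|z_0|_{\pi_1(M)}=O(n)$, and since $H_0$ is a subgroup of the undistorted $\Z$ it is undistorted, giving $|h|_H=O(n)$. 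So $\Delta$ is linear.

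For part (2), $\pi_1(M)$ is virtually the integral Heisenberg group and again sits in a central extension over a Euclidean orbifold group, which is virtually $\Z^2$. The engine is that the Seifert fiber subgroup $\Z$ is \emph{quadratically} distorted and no worse: the Heisenberg relation $z^{n^2}=[a^n,b^n]$ gives the lower bound, and the matching upper bound follows from Osin's classification of subgroup distortion in nilpotent groups \cite{Osin01} (or a direct collection estimate). If $H$ has finite index it is quasi-isometric to $\pi_1(M)$, so $\Delta$ is linear; if $H$ has infinite index with $H\cap\Z=1$, then $H$ embeds quasi-isometrically into the virtually $\Z^2$ quotient by the projection argument above, so $\Delta$ is linear; and if $H$ has infinite index with $H\cap\Z\neq1$, then $H$ contains a subgroup $\langle z^k,t\rangle\cong\Z^2$ with the $z$-direction quadratically distorted, so combined with the upper bound $\Delta$ is quadratic.

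For part (3), tameness \cite{Agol04,CG06} together with the Covering Theorem \cite{Can96} dichotomizes $H$ into geometrically finite or virtual fiber. A geometrically finite $H$ is relatively quasiconvex in $\pi_1(M)$, and since the cusp subgroups $\Z^2$ are undistorted in $\pi_1(M)$ this forces $H$ to be undistorted, so $\Delta$ is linear. If $H$ is a virtual fiber subgroup I pass to a finite cover $M'$ fibering over the circle, so $\pi_1(M')\cong\pi_1(F)\rtimes_\varphi\Z$ with $\varphi$ induced by a pseudo-Anosov and $H$ commensurable to $\pi_1(F)$; writing an element of $\pi_1(F)$ as $t^m w t^{-m}$ produces elements whose $\pi_1(F)$-length is exponential in their $\pi_1(M')$-length (exponential lower bound), while collecting the $t^{\pm1}$'s in a length-$n$ word of $\pi_1(M')$ to the outside bounds the $\pi_1(F)$-length by $nC^{n}$ (exponential upper bound), so $\Delta$ is exponential. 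For part (4), $M$ is a torus bundle over $S^1$ or over an interval with two reflector endpoints; the latter admits a double cover that is a torus bundle over $S^1$, so by commensurability invariance I may assume $\pi_1(M)\cong\Z^2\rtimes_A\Z$ with $A$ an Anosov matrix. Exactly as in part (3), the fiber $\Z^2$ is exponentially distorted via $\|A^m e\|\asymp\lambda^m$ ($\lambda$ the dominant eigenvalue), with a matching exponential upper bound by collecting $t$'s; so an infinite $H\le\Z^2$ is $\Z$ or finite-index in $\Z^2$ and $\Delta$ is exponential. Otherwise $p(H)\le\Z$ is nontrivial: if $H\cap\Z^2$ has rank $2$ then $H$ has finite index in $\pi_1(M)$, hence $\Delta$ is linear; $H\cap\Z^2$ cannot have rank $1$, since it would be an $\langle A^k\rangle$-invariant line and the eigendirections of an Anosov matrix are irrational; and if $H\cap\Z^2=1$ then $H\cong\Z$ and the Lipschitz projection to the base circle forces $|h|_H=O(|h|_{\pi_1(M)})$. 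In every remaining subcase $\Delta$ is linear.

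I expect the main obstacle to be not any single estimate but the undistortedness inputs that glue the pieces together: that finitely generated subgroups of hyperbolic $2$-orbifold groups are quasiconvex (hence undistorted), that relatively quasiconvex subgroups in the cusped hyperbolic case are undistorted given that the $\Z^2$ peripherals are undistorted, and the careful bookkeeping needed to pass through each central or semidirect extension and combine the "base" and "fiber" contributions with the correct constants. By contrast, the exponential lower bounds in parts (3) and (4) are routine once the exponential growth of a pseudo-Anosov, respectively the spectral radius of an Anosov matrix, is invoked, and the exponential upper bounds follow from the same collecting-the-stable-letter estimate in both cases.
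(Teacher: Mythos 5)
Your proposal follows essentially the same case decomposition by geometry as the paper and the same key ideas in each case: virtual abelianness for $S^3,\E^3,S^2\times\R$; a central-extension decomposition argument for the two Seifert geometries (the paper phrases it via the covering space $M_H$ and its circle/line foliation, you phrase it more algebraically via $H\cap\Z$ and $p(H)$, but it is the same computation, needing the same two inputs of undistortedness of the fiber and quasiconvexity of finitely generated subgroups of surface/orbifold groups); the covering and tameness theorems for $\HH^3$; and the Anosov structure for Sol. The conclusions all agree, but there are two places where the argument as written has a gap.

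In the Nil case, the assertion that an infinite-index $H$ with $H\cap\Z\neq1$ contains a subgroup $\langle z^k,t\rangle\cong\Z^2$ fails when $H$ is itself contained in the fiber $\Z$ (for instance $H=\langle z\rangle$). The conclusion is unharmed, since then $H$ directly contains a finite-index subgroup of the quadratically distorted fiber $\Z$, but the $\Z^2$ framing should be dropped and the two subcases (inside the fiber, or strictly larger) treated separately, as the paper does via $\Sigma_H$ being a plane versus a cylinder.

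More substantially, in the Sol case your lower bound for an infinite cyclic $H=\langle v\rangle\le\Z^2$ does not follow from the exponential distortion of $\Z^2$. The inference ``$H$ undistorted in $\Z^2$, $\Z^2$ exponentially distorted in $G$, hence $H$ exponentially distorted in $G$'' is false in general: composing distortion functions bounds the distortion of $H$ in $G$ from above, not from below (take $G$ to be $BS(1,2)\times\Z$, $K=\langle a\rangle\times\Z\cong\Z^2$ exponentially distorted, yet the factor $\{1\}\times\Z\leq K$ is undistorted in $G$). The estimate $\|A^m e\|\asymp\lambda^m$ produces short $G$-words for the elements $A^m e$, which do not lie in the fixed cyclic subgroup $\langle v\rangle$. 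What is needed is the direct estimate $|mv|_G\asymp\log m$ for fixed nonzero $v\in\Z^2$. The paper supplies this with the explicit Sol metric $ds^2=e^{-2z}\,dx^2+e^{2z}\,dy^2+dz^2$: the lattice point $mv$ is reached from the origin by two geodesic segments of length $O(\log m)$, one traveling upward (shrinking the $x$-displacement) and one traveling downward (shrinking the $y$-displacement). Equivalently, one can split $mv$ into two lattice vectors near the stable and unstable eigendirections and conjugate each by $t^{\mp k}$ with $k\approx\log_\lambda m$. Once this estimate is in place the remainder of your Sol argument, including the observation that $H\cap\Z^2$ cannot have rank one because the eigendirections are irrational, goes through.
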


When $M$ is a non-geometric 3-manifold with empty or tori boundary, and the subgroup $H$ is associated to an immersed $\pi_1$--injective subsurface in $M$, i.e, $H = f_{*}(\pi_1(S))$ where $f \colon S \looparrowright M$ is a properly immersed $\pi_1$-injective surface, a complete computation of subgroup distortion is given by \cite{HN19} and \cite{Ngu18a}. In this case, the only possibility for the distortion is linear, quadratic, exponential and double exponential (see Theorem~1.2 in \cite{Ngu18a}).
So far, distortion of an arbitrary finitely generated subgroup of a finitely generated 3-manifold group is unknown before the project in this paper.

In this paper, we use the previous result in \cite{Ngu18a} as one of the key ingredients to generalize the main theorem in \cite{Ngu18a} to arbitrary finitely generated subgroups of finitely generated 3-manifold groups. We give a complete computation to the distortion of all finitely generated subgroups of all finitely generated 3-manifold groups, answering the first question in Problem~\ref{ques:naturalquestion}.

\begin{thm}\label{general}
Let $M$ be a $3$-manifold with finitely generated fundamental group and let $H < \pi_1(M)$ be a finitely generated subgroup. Then the distortion of $H$ in $\pi_1(M)$ can only be linear, quadratic, exponential and double exponential.
\end{thm}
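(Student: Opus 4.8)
The plan is to reduce the general statement to cases that are already understood, using the geometric decomposition of 3-manifolds together with the known behavior of subgroup distortion in the building blocks. First I would handle the reductions that strip away the ``easy'' parts of $M$. If $\pi_1(M)$ is finite the statement is trivial, so assume it is infinite. By passing to the orientation double cover and to the compact core, and using the fact that subgroup distortion is a quasi-isometry invariant that is also preserved (up to the stated equivalence classes) under passing to finite-index supergroups, I would reduce to the case where $M$ is a compact orientable irreducible 3-manifold: if $M$ is reducible, its fundamental group is a free product (by the Kneser--Milnor decomposition) of the $\pi_1$ of the prime factors with a free group, and one invokes a distortion estimate for subgroups of free products (a subgroup of a free product splits by Kurosh, and its distortion is controlled by the distortion of its intersections with conjugates of the factors); a similar remark reduces the non-empty-boundary case with higher-genus boundary components into the irreducible-with-tori-or-empty-boundary setting via doubling or via the fact that such $\pi_1$ are free or hyperbolic relative to the boundary.

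The core of the argument is then the irreducible case with empty or toroidal boundary, where one has the geometric decomposition. Here the main tool is Proposition~\ref{prop:distgeome} for the geometric pieces and the results of \cite{Ngu18a}, \cite{HN19} for surface subgroups of non-geometric pieces. I would first dispose of the geometric $M$ directly by Proposition~\ref{prop:distgeome}. For non-geometric $M$, the idea is to analyze how $H$ interacts with the JSJ decomposition: $\pi_1(M)$ acts on the Bass--Serre tree $T$ of the graph-of-groups decomposition coming from the JSJ tori. Given a finitely generated $H < \pi_1(M)$, either $H$ is contained (up to conjugacy) in a single vertex group $\pi_1(M_v)$ of a JSJ piece, or $H$ acts on $T$ with unbounded orbits and hence $H$ itself splits as a graph of groups whose vertex groups are the $H\cap g\pi_1(M_v)g^{-1}$ and edge groups are $H\cap g\pi_1(T_e)g^{-1}$. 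In the first case the distortion of $H$ in $\pi_1(M)$ is the composition of the distortion of $H$ in $\pi_1(M_v)$ with the distortion of $\pi_1(M_v)$ in $\pi_1(M)$; the latter is known to be linear (JSJ vertex groups are undistorted, e.g.\ since they are relatively quasiconvex / the ambient group is relatively hyperbolic with respect to the graph-manifold and cusp pieces, or by a direct normal-form argument), and the former is linear, quadratic, or exponential by the geometric case — crucially, the surface-subgroup analysis of \cite{Ngu18a} is not yet needed here. The genuinely interesting case is the second: $H$ splits, and I would show the distortion of $H$ in $\pi_1(M)$ is governed by the distortions of the pieces $H\cap g\pi_1(M_v)g^{-1}$ inside $g\pi_1(M_v)g^{-1}$, combined with how the edge groups (which are subgroups of $\Z^2$, hence themselves possibly distorted — this is exactly where quadratic distortion in a Nil or Seifert piece, or exponential distortion in a Sol-like/atoroidal piece, enters) sit inside the vertex pieces and inside $\pi_1(M)$.

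The double-exponential bound is the delicate point and, I expect, the main obstacle. It arises from an iterated composition of distortions along the Bass--Serre tree: an element of $H$ written as a word of length $n$ in $\pi_1(M)$ may be expressible in $H$ only after passing through a bounded number of JSJ transitions, each of which can cost an exponential (when crossing between Seifert pieces along a fiber direction, or through an atoroidal piece where an edge-group element is exponentially distorted). Composing at most two genuinely exponential stages — first expressing the $\pi_1(M)$-word in terms of the relevant vertex/edge structure at cost at most exponential, then expressing that in terms of $H$'s generators at another exponential cost — yields the double-exponential upper bound and no worse, because the length of the JSJ graph and hence the number of transitions is bounded independently of $n$. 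The work will be to set up a careful normal-form / area-type estimate for words in the graph-of-groups that tracks these costs multiplicatively, to verify that the number of ``exponential'' stages that can genuinely compose is at most two (one coming from the ambient side and one from the subgroup side), and to exhibit matching lower-bound examples (a surface subgroup of the right type in a suitable graph manifold, as in \cite{Ngu18a}) showing that double-exponential distortion is actually attained and that nothing beyond it occurs. I would also need to check that no other equivalence class can appear — e.g.\ that one never gets a distortion strictly between quadratic and exponential, or between exponential and double exponential — which follows from the fact that each individual piece only contributes from the finite list $\{$linear, quadratic, exponential$\}$ and compositions of functions from that list lie in $\{$linear, quadratic, exponential, double exponential$\}$ up to the standard equivalence.
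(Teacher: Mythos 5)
Your reductions at the start (Scott core, orientation double cover, Kneser--Milnor plus Kurosh plus the relative-quasiconvexity distortion formula for free products) match the paper's reductions, and you correctly dispose of the geometric case via Proposition~\ref{prop:distgeome}. The gap is in the core non-geometric case with nontrivial torus decomposition, and there are two concrete problems with your plan there.

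First, the claim that ``the length of the JSJ graph and hence the number of transitions is bounded independently of $n$'' is false and undermines the whole composition argument. The JSJ graph of $M$ (or of the Scott core of $M_H$) is finite, but a geodesic of length $n$ in the universal cover $\tilde{M}$ crosses roughly $\Theta(n)$ distinct elevations of decomposition tori; the paper has to track this explicitly and shows the number of crossings is $\le \kappa^2 n/\rho$ in Proposition~\ref{firstreduction}. So one cannot say that ``at most two exponential stages compose''; naively one has a linear-in-$n$ number of stages, each potentially costly, and the challenge is precisely to show that these costs do not compound into a tower of exponentials. This is exactly why the superadditive-closure bookkeeping ($f \preceq \Delta^G_H \preceq \bar{f}$, and $\bar f \sim f$ because $f$ comes from the short list) is needed, and why the paper works so hard in Propositions~\ref{firstreduction} and~\ref{secondreduction} to produce a path that decomposes into subpaths, each landing in a single component of the relevant elevation.

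Second, and more fundamentally, your plan never makes the reduction that makes the problem tractable: replacing the abstract subgroup $H$ by a surface subgroup $\pi_1(\Phi(H))$. The paper's Theorem~\ref{technical} shows that the distortion of $H$ depends only on the almost fiber surface $\Phi(H)$, at which point \cite{Ngu18a} (the surface-subgroup case) can be quoted. You explicitly defer ``area-type normal-form estimates'' to later work, but the mechanism producing the double exponential is not ``two exponentials composing across a bounded number of transitions''; in \cite{Ngu18a} it comes from a geometrically infinite (exponentially distorted) piece of $\Phi(H)$ interacting with non-separability, detected by the spirality character, and your sketch doesn't capture that. Without the almost fiber surface (or an equivalent device), you also cannot rule out intermediate distortion classes, which you assert by appeal to ``compositions of functions from the finite list'' — but without controlling the number of composition steps by a constant (which, again, you cannot), that argument does not close.

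Finally, a smaller point: in the reduction for higher-genus boundary you should, as the paper does, paste hyperbolic manifolds with totally geodesic boundary to pass to a manifold $N$ with tori boundary, then show $\pi_1(M)$ is undistorted in $\pi_1(N)$ via the almost fiber surface of $\pi_1(M) < \pi_1(N)$ being empty. Doubling does not obviously preserve the subgroup distortion you need without further argument.
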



By some standard arguments, we first reduce to the case where $M$ is compact, connected, orientable, irreducible and has empty or tori boundary (see the proof of Theorem~\ref{general}).
Theorem~\ref{general} actually follows from the following Theorem~\ref{toriboundary}, which not only shows what kind of subgroup distortion show up, but also provides a strong connection between subgroup distortion and separability of this subgroup. Here, a subgroup $H < G$ is called \emph{separable} if it is an intersection of finite index subgroups of $G$. The study of subgroup separability is very important in the 3-manifold theory and it is closely related with the virtual Haken conjecture that was resolved by Agol \cite{Agol13}. Recently, in \cite{Sun18}, the second author generalizes the work of Liu \cite{Liu17} and Rubinstein-Wang \cite{RW98} to give a complete characterization on which finitely
generated subgroups of finitely generated 3-manifold groups are separable. In particular, the second author introduces a notion called \emph{almost fiber surface} $\Phi(H)$ of a finitely generated subgroup in a 3-manifold group $H<\pi_1(M)$ (that is also a generalization of a notion called \emph{almost fiber part} in \cite{Liu17}), and show that all information
about separability of $H$ can be obtained by examining the almost fibered surface (see Theorem~1.3 in \cite{Sun18}). The almost fiber surface $\Phi(H)$ is naturally an embedded (possibly disconnected) subsurface of $M_H$, the covering space of $M$ corresponding to $H<\pi_1(M)$, and the torus decomposition of $M$ induces a decomposition of $\Phi(H)$. Under this decomposition of $\Phi(H)$, each of its component is called a \emph{vertex piece} or simply a \emph{piece} of $\Phi(H)$. More detail on the almost fiber surface will be provided in Section \ref{sec:almostfiber}, and we modify its definition a little bit so that it accommodates Theorem \ref{toriboundary}.

The following theorem answers the second question in Problem~\ref{ques:naturalquestion}. It shows that the subgroup distortion of a finitely generated subgroup of a 3-manifold group is closely related to the separability of this subgroup.

\begin{thm}\label{toriboundary}
Let $M$ be a compact orientable irreducible $3$-manifold with empty or tori boundary, with nontrivial torus decomposition and does not support the Sol geometry. Let $H<\pi_1(M)$ be a finitely generated subgroup, and let $\Phi(H)$ be the almost fiber surface of $H$. Let $\Delta$ be the distortion of $H$ in $\pi_1(M)$. There are four mutually exclusive cases:
\begin{enumerate}
    \item 
    \label{toriboundary:1}
    If there is a component $S$ of the almost fiber surface $\Phi(H)$ such that $S$ contains a geometrically infinite piece and $\pi_1(S)$ is non-separable in $\pi_1(M)$ then $\Delta$ is double exponential.
    \item 
    \label{toriboundary:2}
   Suppose that $\Phi(H)$ has no component satisfying (\ref{toriboundary:1}).  If there is a component $S$ of the almost fiber surface $\Phi(H)$ such that $S$ contains a geometrically infinite piece, then $\Delta$ is exponential.
    \item 
    \label{toriboundary:3}
    Suppose that $\Phi(H)$ has no component satisfying (\ref{toriboundary:1}) and (\ref{toriboundary:2}). If there is a component $S$ of the almost fiber surface $\phi(H)$ that contains at least two pieces, then $\Delta$ is exponential if $\pi_1(S)$ is non-separable in $\pi_1(M)$ and $\Delta$ is quadratic if $\pi_1(S)$ is separable in $\pi_1(M)$.
    \item 
    \label{toriboundary:4}
    In all other cases, $\Delta$ is linear.
\end{enumerate}
\end{thm}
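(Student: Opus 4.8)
The plan is to analyze the geometry of the covering space $M_H$ together with the combinatorial structure of the almost fiber surface $\Phi(H)$, and to reduce the computation of $\Delta$ to the surface-subgroup case handled in \cite{Ngu18a} and to known geometric building blocks. First I would recall from \cite{Sun18} the precise structure of $\Phi(H)$: it is an embedded subsurface of $M_H$ carrying the ``essential'' part of $H$, each of its vertex pieces lies in a single geometric piece (Seifert or hyperbolic) of $M_H$ over a vertex of the torus decomposition of $M$, and the inclusion of $\pi_1(\Phi(H))$ into $H$ is (coarsely) a quasi-isometric embedding onto its image up to the distortion contributed by the ambient group — this last point is where Theorem~1.2 of \cite{Ngu18a} enters, since $\Phi(H)$ is built out of immersed $\pi_1$-injective subsurfaces whose distortion in $\pi_1(M)$ is already classified (linear, quadratic, exponential, or double exponential). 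The key structural input I would isolate as a lemma is: $\Delta_H^{\pi_1(M)}$ is equivalent to the maximum, over components $S$ of $\Phi(H)$, of the distortion $\Delta_{\pi_1(S)}^{\pi_1(M)}$, together with a bounded (linear) contribution from the complement $M_H \setminus \Phi(H)$, which is a disjoint union of pieces on which $H$ acts with undistorted (graph-of-groups, relatively hyperbolic) geometry.

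Given that reduction, the four cases of Theorem~\ref{toriboundary} become a case analysis on the local geometry of the pieces of a worst-case component $S$. For case (\ref{toriboundary:1}), a geometrically infinite piece with $\pi_1(S)$ non-separable forces, via the obstruction to separability identified in \cite{Sun18}, a ``stacking'' of exponentially distorted fiber subsurfaces along the graph structure of $S$; composing the exponential distortion of the virtual-fiber piece (Proposition~\ref{prop:distgeome}(3)) with the exponential distortion coming from the non-separability (which itself is of the exponential type appearing in \cite{Ngu18a}) yields double exponential, and the matching upper bound comes from the double-exponential upper bound already present in Theorem~1.2 of \cite{Ngu18a} applied componentwise. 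Case (\ref{toriboundary:2}) is the same geometrically infinite piece but with all relevant surface subgroups separable, so only one layer of exponential distortion survives; here I would lean on the Subgroup Tameness results cited in the introduction to get the exponential upper bound and on the explicit geometrically-infinite example for the lower bound. Case (\ref{toriboundary:3}) concerns a component $S$ that is genuinely graph-like (at least two Seifert/hyperbolic-geometrically-finite pieces glued along an annulus or torus): the quadratic lower bound comes from the Nil-type / Seifert-fiber mechanism of Proposition~\ref{prop:distgeome}(2) transplanted to the JSJ annulus, exactly as in \cite{HN19}, while non-separability upgrades this to exponential by the same stacking argument as before; separability caps it at quadratic. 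Case (\ref{toriboundary:4}) is the residual ``small'' case where $\Phi(H)$ has only isolated pieces, each geometrically finite and sitting quasiconvexly, so $H$ is undistorted.

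The heart of the argument — and the step I expect to be the main obstacle — is establishing the reduction lemma, i.e. that the distortion of $H$ is controlled up to equivalence by the distortion of the surface subgroups $\pi_1(S)$ attached to the components of $\Phi(H)$, with everything off $\Phi(H)$ contributing only linearly. This requires two things that are not formal: (i) an upper bound showing that any element $h\in H$ with small word length in $\pi_1(M)$ can be written, after passing to $M_H$, as a controlled-length product of elements each supported in a single piece or in $\Phi(H)$ — this is a normal-form / Bass--Serre argument in the graph-of-spaces $M_H$, and the subtlety is that the edge groups (JSJ tori) can themselves be distorted, so one must track how fiber coordinates accumulate when crossing edges; and (ii) a lower bound exhibiting, inside the worst-case component $S$, an explicit sequence of elements realizing the claimed distortion and verifying that this distortion is not ``absorbed'' by a shortcut through the rest of $M_H$ — this needs the separability dichotomy of \cite{Sun18} in an effective form, since the shortcut exists precisely when the relevant surface group fails to be separable, and quantifying the length of such a shortcut is what produces the extra exponential. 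I would handle (i) by a careful induction on the number of JSJ pieces met by a geodesic, using that each piece is either Seifert (where distortion is at most quadratic by Osin) or hyperbolic (where geometrically finite pieces are quasiconvex and geometrically infinite pieces are exponentially distorted), and (ii) by constructing the extremal sequences from the Dehn-twist / fiber-flow elements already used in \cite{HN19, Ngu18a} and bounding shortcuts by the geometry of the cover $M_{\pi_1(S)}$ relative to $M_H$.
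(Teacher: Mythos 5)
Your high-level strategy matches the paper's exactly: reduce $\Delta$ to $\max_i \Delta_{\pi_1(S_i)}^{\pi_1(M)}$ over components $S_i$ of $\Phi(H)$ (the paper's Theorem~\ref{technical}), then invoke the case analysis of \cite{Ngu18a}. Two substantive issues, however. First, in justifying the reduction you worry that ``the edge groups (JSJ tori) can themselves be distorted, so one must track how fiber coordinates accumulate when crossing edges.'' This has the geometry backwards: subgroups of edge groups, and indeed all vertex subgroups, are \emph{undistorted} in $\pi_1(M)$ (Remark~\ref{rem:vertexandedgegroup}), and that undistortedness is precisely what makes the reduction tractable. The actual difficulty in proving Theorem~\ref{technical} lies elsewhere: one needs a Scott core $K\subset M_H$ whose intersection with each edge plane of $\tilde M$ is metrically convex, path-modification lemmas (Lemmas~\ref{easypiece} and~\ref{hardpiece}) that push a $\tilde M$-geodesic into $\tilde K$ within each piece with controlled length increase, and a two-stage reduction (Propositions~\ref{firstreduction} and~\ref{secondreduction}) --- first discarding $S^1$-bundle and geometrically finite pieces, then projecting to $\Phi(H)$ using Euclidean geometry on the flat edge planes to bound the detour from the truncated path to $\tilde A$. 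Your Bass--Serre normal-form sketch is plausible in outline but supplies none of these ingredients and misidentifies where the real work is.

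Second, once the reduction is granted, you propose to apply Theorem~1.2 of \cite{Ngu18a} directly to the components $S_i$. But as the paper flags in its proof of Theorem~\ref{toriboundary}, $S_i$ need not be a \emph{clean surface} in the sense of \cite{Ngu18a}: a partial fiber piece of $S_i$ need not be a proper subsurface of the corresponding piece of $M$, so the hypotheses of that theorem are not literally met, and one must re-examine the proof of \cite{Ngu18a} to confirm it still applies to $S_i$. Relatedly, the quadratic/exponential dichotomy in case~(\ref{toriboundary:3}) only comes out cleanly after deleting boundary-parallel annulus pieces from $\Phi(H)$ (the modified Definition~\ref{modified}); without that modification a component could ``contain at least two pieces'' for a spurious reason and the case split would be wrong. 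Your proposal addresses neither adjustment, and also misdescribes what can appear in $\Phi(H)$ in cases~(\ref{toriboundary:3}) and~(\ref{toriboundary:4}): geometrically finite hyperbolic pieces are never pieces of $\Phi(H)$ --- once geometrically infinite pieces are excluded, all remaining pieces of $\Phi(H)$ are Seifert (virtual or partial fiber) pieces.
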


We note that Theorem~\ref{toriboundary} generalizes Theorem~1.2 in \cite{Ngu18a} where the subgroup $H=f_{*}(\pi_1(S))$ for a clean surface $f \colon S \looparrowright M$. The strategy of the proof of Theorem~1.2 in \cite{Ngu18a} is the following. At first, the author shows that the distortion of the surface subgroup in the 3-manifold group is determined by the almost fiber surface. Then the author computes the distortion of each component of the almost fiber surface. We prove Theorem~\ref{toriboundary} by following the same strategy. The distortion of components of the almost fiber surface $\Phi(H)$ follows from the work in \cite{Ngu18a}, while showing the distortion of $H$ in $\pi_1(M)$ depends only on $\Phi(H)$ requires more work (see Theorem~\ref{technical}). This result is the technical heart of this paper.

\begin{thm}\label{technical}
Let $M$ be a $3$-manifold and let $H < \pi_1(M)$ be a subgroup as in Theorem \ref{toriboundary}. Let $\Phi(H)$ be the almost fiber surface of $H$ with connected components $S_1,\cdots,S_n$, and let $\delta_{S_i}$ be the distortion $\pi_1(S_i)$ in $\pi_1(M)$. Then the subgroup distortion of $H$ in $G=\pi_1(M)$ satisfies: 
$$\Delta^G_H\sim f.$$

Here $$f(n):=\max\{\delta_{S_i}(n)\ |\ S_i\text{\ is\ a\ component\ of\ }\Phi(H)\}.$$
\end{thm}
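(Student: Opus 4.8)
The plan is to establish the equivalence $\Delta^G_H \sim f$ by proving the two inequalities $\Delta^G_H \succeq f$ and $\Delta^G_H \preceq f$ separately, the first being relatively soft and the second being the technical core. For the lower bound $\Delta^G_H \succeq f$: since each $S_i$ is a component of the almost fiber surface $\Phi(H) \subset M_H$, the inclusion of $\pi_1(S_i)$ into $H$ factors through $\pi_1(M_H) = H$, and this inclusion should be undistorted (or at worst controlled) because $S_i$ sits as an embedded essential subsurface in the cover $M_H$. Granting that $\pi_1(S_i) \hookrightarrow H$ is undistorted, one gets $\delta_{S_i}(n) = \Delta^{\pi_1(M)}_{\pi_1(S_i)}(n) \preceq \Delta^{H}_{\pi_1(S_i)}(n) \cdot \Delta^{G}_{H}(n) \sim \Delta^G_H(n)$ by the standard composition/multiplicativity of distortion functions along $\pi_1(S_i) < H < G$; taking the maximum over $i$ yields $f \preceq \Delta^G_H$. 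The one point needing care here is confirming that $\pi_1(S_i)$ is indeed undistorted in $H = \pi_1(M_H)$ — this should follow from the construction of the almost fiber surface in Section~\ref{sec:almostfiber}, where $\Phi(H)$ is built so that its pieces carry the "distorted part" of $H$ while $H$ itself is, in a suitable sense, quasi-isometrically modelled on a graph of the $\pi_1(S_i)$'s together with undistorted (graph manifold / geometrically finite) pieces.

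For the upper bound $\Delta^G_H \preceq f$, the idea is to give an efficient rewriting procedure: given $h \in H$ with $|h|_{\mathcal S} \le n$ (word length in $G$), one must bound $|h|_{\mathcal A}$ (word length in $H$) by $f(n)$ up to the usual constants. I would work geometrically in the universal cover $\widetilde M$ (or in a tree-graded / JSJ-tree picture): lift a geodesic in $G$ representing $h$, and track how it interacts with the decomposition of $M_H$ coming from the torus decomposition of $M$. The key structural input is that $H = \pi_1(M_H)$ decomposes (as a graph of groups along the torus decomposition of $M_H$) into vertex groups that are either (a) pieces lying in some component $S_i$ of $\Phi(H)$, or (b) "non-almost-fiber" pieces — pieces of $M_H$ whose fundamental groups are undistorted in $\pi_1(M)$ (this is essentially the defining property of the almost fiber surface from \cite{Sun18}, as reformulated in Section~\ref{sec:almostfiber}). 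So a path of length $n$ in $G$, when pushed into $M_H$, decomposes into segments; the segments travelling through type-(b) pieces cost only linearly, while the segments travelling through type-(a) pieces cost at most $\delta_{S_i}(n) \le f(n)$ each by definition of $\delta_{S_i}$. Summing the (at most linearly many) segments, and using that $f$ is at least linear and superadditive up to constants (all of linear, quadratic, exponential, double exponential satisfy $k\cdot f(n) \preceq f(n)$ for fixed $k$, or more precisely $f(n_1)+\cdots+f(n_k) \preceq f(n_1+\cdots+n_k)$ after the equivalence), gives $|h|_{\mathcal A} \preceq f(n)$.

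The main obstacle is making the "decompose the path and sum" step rigorous, and this is exactly what Theorem~\ref{technical} is flagged as the technical heart. Two difficulties arise. First, the combinatorics of how a geodesic segment in $\widetilde M$ crosses lifts of the JSJ tori: one needs a bound on the number of JSJ pieces the path meets and control on the length of each sub-segment in terms of $n$, which requires a distance estimate (a Masur–Minsky / tree-of-spaces style "distance formula" or at least a coarse Lipschitz retraction onto each vertex space) relating the $G$-metric to the pieces — this is standard for graph manifolds and for relatively hyperbolic pieces but needs to be assembled uniformly across the mixed decomposition of $M$. Second, and more subtly, the transition data at the JSJ tori: the tori themselves are undistorted, but one must ensure that passing from one piece to an adjacent piece does not amplify length uncontrollably and, crucially, that a sub-segment entering a type-(a) piece $S_i$ really is governed by $\delta_{S_i}$ and not by something larger — i.e. that the relevant displacement in that piece is realized (coarsely) inside $\pi_1(S_i)$ rather than in the ambient piece group. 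I expect this to reduce, after the graph-of-groups setup, to careful bookkeeping plus an appeal to the piece-wise distortion computations already recorded in \cite{Ngu18a}, together with the fact that $f$ being drawn from $\{$linear, quadratic, exponential, double exponential$\}$ makes it robust under the bounded arithmetic operations (sums, bounded compositions, multiplication by constants) that the estimate produces.
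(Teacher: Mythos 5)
Your proposal correctly identifies the overall strategy the paper follows: a lower bound via the undistortion of $\pi_1(S_i)$ in $H$ (the paper's Lemma~\ref{lem:undistorted}) combined with Lemma~\ref{lem:preparationdist}, and an upper bound via decomposing a short word in $G$ into segments whose costs are summed using the superadditive closure of $f$. You also correctly flag the two places where the real difficulty lies (controlling crossings of JSJ pieces, and controlling the transitions at the edge tori). But the proposal stops at flagging them; it does not produce the machinery needed to overcome them, and that machinery is precisely what the theorem consists of. Concretely, the upper bound as you sketch it assumes one can push a $G$-geodesic into $M_H$ and then read off the distortion piece by piece, but a geodesic in $\widetilde M$ generally leaves $\widetilde K$ (the preimage of a Scott core of $M_H$), so ``read off the cost from each piece'' is not available until one has shown how to reroute the geodesic. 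The paper does this by first choosing a Scott core $K\subset M_H$ whose intersections with edge spaces are convex (Preparation Step II), and then proving two metric lemmas (Lemmas~\ref{easypiece} and~\ref{hardpiece}) asserting that a geodesic segment through a geometrically finite or $S^1$-bundle piece, resp.\ a partial fiber piece, can be replaced at linear cost by a nearby path whose endpoints lie in $\widetilde K_i$ (or project to the projection of $\widetilde K_i$). Neither of these appears in your plan, and without them the ``decompose and sum'' step does not close.

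A second gap is structural: you treat the non-$\Phi(H)$ pieces of $M_H$ as a single class of ``undistorted'' pieces and the almost fiber surface as what remains, but the passage from $K$ to $\Phi(H)$ is not one step. The paper performs a two-step reduction. Proposition~\ref{firstreduction} first removes the $S^1$-bundle and geometrically finite pieces, leaving a $3$-dimensional set $A$ which is \emph{not} a union of almost-fiber surface components: it contains simply connected and infinite cyclic partial-fiber pieces and is pasted along discs as well as annuli. Proposition~\ref{secondreduction} then compares $A$ with the surface $\Phi(H)$ (pasted only along circles). This second reduction needs a genuinely new idea that your proposal does not anticipate: an Euclidean sine-law estimate on the flat edge planes $E_i$, using the fact that the two Seifert fibrations induced on a JSJ torus meet at an angle bounded away from $0$ and $\pi$, to show that the rerouted path $\zeta_i\subset E_i$ comes within bounded distance of $\widetilde A\cap E_i$. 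Finally, a small error in your lower bound: the distortions compose as $\Delta^{G}_{\pi_1(S_i)}\preceq \Delta^{H}_{\pi_1(S_i)}\circ\Delta^G_H$, not as a product; this happens not to matter here because $\Delta^H_{\pi_1(S_i)}$ is linear, but the formula as written is wrong.
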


The proof of Theorem~\ref{technical} uses the same strategy as in \cite{Ngu18a}, however the techniques are very different. Unlike as in the setting of a properly immersed $\pi_1$--injective surface $S \looparrowright M$ where a compact surface $S$ is given, we only need to compute the distortion of $\tilde{S}$ in $\tilde{M}$. In the general setting, we need to consider the covering space $M_H$ of $M$ corresponding to $H < \pi_1(M)$, and then construct a Scott core $K \subset M_H$ with some special properties. We then compute the distortion of $\tilde{K}$ in $\tilde{M}$.

\subsection{Overview}
In Section~\ref{sec:preli} we review some concepts in geometric group
theory. Section~\ref{sec:almostfiber} is a review on the almost fiber surface of a finitely generated subgroup of a 3-manifold group. In Section~\ref{sec:techicalsection}, we give the proof of Theorem~\ref{technical}. In Section~\ref{sec:distortiongeometricmld}, we prove of Theorem~\ref{general} and Theorem~\ref{toriboundary}.

\subsection{Acknowledgements} 
The first author would like to thank Chris Hruska for helpful conversations. The second author is partially supported by NSF grant DMS-1840696. We thank Chris Hruska for his comments on a previous version of this preprint, and thank Martin Bridson for informing us an alternative proof of Proposition \ref{prop:SL2}. We are also grateful to the anonymous referee for many very helpful comments.

\bigskip
\bigskip

\section{Preliminaries}
\label{sec:preli}
In this section we review some concepts in geometric group theory that will be used in this paper.
\begin{defn}
\label{def:equivalentfunction}
Let $\mathcal{F}$ be the collection of all non-decreasing functions from positive reals to positive reals. Let $f$ and $g$ be arbitrary elements of $\mathcal{F}$. The function $f$ is \emph{dominated} by a function $g$, denoted by
\emph{$f\preceq g$}, if there are positive constants $A$, $B$, $C$, $D$ and $E$ such that
\[
  f(x) \leq A\,g(Bx+C)+Dx+E \quad \text{for all $x$.}
\]
Two functions $f$ and $g$ are \emph{equivalent},
denoted by $f\sim g$, if $f\preceq g$ and $g\preceq f$.
\end{defn}

\begin{rem}
The relation $\sim$ is an equivalence relation on the set $\mathcal{F}$. Let $f$ and $g$ be two polynomial functions with degree at least $1$ in $\mathcal{F}$, then it is not hard to show that they are equivalent if and only if they have the same degree. Moreover, all exponential functions of the form $a^{bx+c}$, where $a>1$, $b>0$ are equivalent.
\end{rem}

\begin{defn}[Subgroup distortion]
Let $H < {G}$ be a pair of finitely generated groups, and let $\mathcal{S}$ and $\mathcal{A}$ be finite generating sets of $G$ and $H$ respectively. The \emph{distortion} of $H$ in $G$ is the function
\[
   \Delta_{H}^G(n)
     = \max \bigset{\abs{h}_{\mathcal{A}}}{\text{$h\in{H}$ and $\abs{h}_{\mathcal{S}}\leq{n}$} }
\]
Up to equivalence, the function $\Delta_{H}^{G}$ does not depend on the choice of finite generating sets $\mathcal{S}$ and $\mathcal{A}$.
\end{defn}

\begin{lem}[Proposition~9.4 \cite{Hru10}]
\label{lem:HruskaProp9.4}
Let $G$ be a finitely generated group with a word length metric $d$. Suppose $H$ and $K$ are subgroups of $G$. For each constant $r$ there is a constant $r' = r'(G,d,H,K,r)$ so that in the metric space $(G,d)$ we have 
\[
\mathcal{N}_{r}(H) \cap \mathcal{N}_{r}(K) \subset \mathcal{N}_{r'}(H \cap K)
\]
\end{lem}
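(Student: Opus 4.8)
The plan is to use the fact that, since $G$ is finitely generated, the ball $F := \ball{1}{r} = \set{g \in G}{\abs{g} \le r}$ is a \emph{finite} subset of $G$, of cardinality depending only on $G$, $d$ and $r$; the whole argument then reduces to a bookkeeping argument with elements of $F$.

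First I would translate the hypothesis into multiplicative form. If $g \in \nbd{H}{r} \cap \nbd{K}{r}$, then there are $h \in H$ and $k \in K$ with $d(g,h) \le r$ and $d(g,k) \le r$; since $d$ is left-invariant this says $h^{-1}g \in F$ and $k^{-1}g \in F$, so $g = hs = kt$ with $s,t \in F$. Choosing one such pair for each such $g$, I obtain a map $\tau \colon \nbd{H}{r} \cap \nbd{K}{r} \to F \times F$, and since $F \times F$ is finite, $\tau$ has finite image; I will call the elements of this image the \emph{types} that occur.

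The key point is that two points with the same type differ by an element of $H\cap K$: if $g = hs = kt$ and $g' = h's = k't$ with $h,h' \in H$ and $k,k' \in K$, then $g'g^{-1} = h'h^{-1} \in H$ and simultaneously $g'g^{-1} = k'k^{-1} \in K$, hence $g'g^{-1} \in H\cap K$. (Equivalently, each fibre of $\tau$ is contained in a single right coset of $H\cap K$.) To conclude, for every type $(s,t)$ that occurs I would fix a representative $g_{(s,t)} \in \tau^{-1}(s,t)$ and set
\[
  r' := \max\set{\,\abs{g_{(s,t)}}\,}{(s,t)\text{ is a type that occurs}\,}
\]
(interpreting the maximum as $0$ when $\nbd{H}{r}\cap\nbd{K}{r}$ is empty); this is a maximum over a finite set, so $r'$ is finite and depends only on $G$, $d$, $H$, $K$ and $r$. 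Then for any $g \in \nbd{H}{r}\cap\nbd{K}{r}$ with $\tau(g) = (s,t)$, the element $\eta := g\,g_{(s,t)}^{-1}$ lies in $H\cap K$ by the key point, and by left-invariance and symmetry of $d$ we get $d(g,\eta) = \abs{\eta^{-1}g} = \abs{g_{(s,t)}} \le r'$, so $g \in \nbd{H\cap K}{r'}$. This is the claimed inclusion.

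There is no real obstacle in this argument; the points that require a little care are keeping the sides straight (the neighbourhoods $\nbd{H}{r}$ and $\nbd{K}{r}$ are unions of \emph{left} translates $hF$, whereas the cosets of $H\cap K$ that appear are \emph{right} cosets) and noting that the distance from a point $\eta g_{0}$ to $H\cap K$ is bounded by $\abs{g_{0}}$ independently of $\eta$. The latter is exactly what makes a single constant $r'$ work over the whole --- typically infinite --- set $\nbd{H}{r}\cap\nbd{K}{r}$, and it is where finiteness of $F$ is essential.
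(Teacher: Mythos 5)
Your argument is correct: left-invariance converts the hypothesis into $g=hs=kt$ with $s,t$ in the finite ball of radius $r$, the pigeonhole on the finitely many types $(s,t)$ shows each fibre lies in a single right coset of $H\cap K$, and taking the maximum word length of one representative per occurring type gives a uniform $r'$. The paper itself offers no proof, quoting the statement as Proposition~9.4 of \cite{Hru10}, and your finite-ball/pigeonhole argument is essentially the standard one underlying that reference, so there is nothing to add.
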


 \begin{defn}\label{def:superadditive}
A function $f \colon \N \to \N$ is \emph{superadditive} if 
\[
f(a+b) \ge f(a) + f(b) \, \, \, \text{for all $a,b \in \N$}
\]
The \emph{superadditive closure} of a function $f \colon \N \to \N$ is the function defined by the formula
\[
\bar{f}(n) = \max \bigset{f(n_1)+ \cdots + f(n_{\ell})}{\ell \ge 1 \text{\,\,and $n_1 +\cdots +n_{\ell} = n$}}
\]
\end{defn}

\begin{lem}[Proposition~2.5 in \cite{Ngu18a}]
\label{lem:preparationdist}
Let $K',K$ and $G'$ be finitely generated subgroups of a finitely generated group $G$ such that $K' < G'$ and $K' < K$. Suppose that $K'$ is undistorted in $K$ and $G'$ is undistorted in $G$, then $\Delta_{K'}^{G'} \preceq \Delta_{K}^{G}$.
\end{lem}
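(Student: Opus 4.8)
The plan is to unwind the definitions of subgroup distortion and of the relation $\preceq$, and then chain together two completely elementary facts; there is essentially nothing subtle. Throughout, fix arbitrary finite generating sets for $K'$, $K$, $G'$ and $G$; since each distortion function is independent of this choice up to $\sim$, nothing is lost, and write $|\cdot|_{K'}, |\cdot|_K, |\cdot|_{G'}, |\cdot|_G$ for the corresponding word lengths. The first elementary fact is that the inclusion of a finitely generated subgroup into a finitely generated overgroup is Lipschitz: applied to $G' < G$, writing each generator of $G'$ as a word in the generators of $G$ and letting $C_0$ be the maximum of the resulting lengths gives $|x|_G \le C_0\,|x|_{G'}$ for every $x \in G'$. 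The second fact is just a restatement of the hypothesis that $K'$ is undistorted in $K$: there is a constant $C_1$ with $|x|_{K'} \le C_1\,|x|_K + C_1$ for all $x \in K'$.

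With these in hand, I would run the following estimate. Fix $n$ and pick $k' \in K'$ with $|k'|_{G'} \le n$ realizing the maximum that defines $\Delta_{K'}^{G'}(n)$. Since $K' < G'$, we may apply the Lipschitz bound for $G' < G$ to $k'$, obtaining $|k'|_G \le C_0\,|k'|_{G'} \le C_0 n$. Since $K' < K$, the element $k'$ lies in $K$ and satisfies $|k'|_G \le C_0 n$, so by the very definition of $\Delta_K^G$ we get $|k'|_K \le \Delta_K^G(C_0 n)$. Feeding this into the undistortedness of $K'$ in $K$ gives
\[
\Delta_{K'}^{G'}(n) \;=\; |k'|_{K'} \;\le\; C_1\,|k'|_K + C_1 \;\le\; C_1\,\Delta_K^G(C_0 n) + C_1 ,
\]
an inequality of exactly the shape $f(n) \le A\,g(Bn + C) + Dn + E$ from Definition~\ref{def:equivalentfunction} (with $A = C_1$, $B = C_0$, $C = 0$, $D = 0$, $E = C_1$). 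Therefore $\Delta_{K'}^{G'} \preceq \Delta_K^G$, which is the claim.

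There is no real obstacle: the only care required is keeping track of which generating set each word metric refers to, and noting that the constant $C_0$ for $G' \hookrightarrow G$ legitimately applies to $k'$ precisely because $K' < G'$. I also observe that only one of the two stated hypotheses, namely that $K'$ is undistorted in $K$, actually enters this argument; the hypothesis that $G'$ is undistorted in $G$ is not needed to conclude $\Delta_{K'}^{G'}\preceq\Delta_K^G$.
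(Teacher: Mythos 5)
Your argument is correct. The paper does not give its own proof of this lemma---it quotes Proposition~2.5 of \cite{Ngu18a}---and your proof is precisely the standard unwinding of the definitions one would expect: the Lipschitz bound for the inclusion $G'\hookrightarrow G$ turns the constraint $\abs{k'}_{G'}\le n$ into $\abs{k'}_{G}\le C_0 n$, the definition of $\Delta_{K}^{G}$ then bounds $\abs{k'}_{K}$, and undistortion of $K'$ in $K$ finishes, giving $\Delta_{K'}^{G'}(n)\le C_1\,\Delta_{K}^{G}(C_0 n)+C_1$, which is exactly the shape required by Definition~\ref{def:equivalentfunction}. Your side remark is also accurate: the hypothesis that $G'$ is undistorted in $G$ is never used in deriving this inequality; it is simply carried over from the statement as quoted, and is harmless since it holds in all of the paper's applications of the lemma.
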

It is well known that a group acting properly, cocompactly, and isometrically on a geodesic space is quasi-isometric to the space.
The following corollary of this fact allows us to compute distortion using the geometries of spaces in place of word metrics.
\begin{cor}
\label{cor:GeometricDistortion}
Let $X$ and $Y$ be compact geodesic spaces, and let $g\colon{(Y,y_0)} \to (X,x_0)$ be $\pi_1$--injective. We lift the metrics on $X$ and $Y$ to geodesic metrics on universal covers $\tilde{X}$ and $\tilde{Y}$ respectively, with lifted map $\tilde{g}\colon (\tilde{Y},\tilde{y}_0)\to (\tilde{X},\tilde{x}_0)$. Let $G=\pi_1(X,x_0)$ and $H=g_{*} \bigl( \pi_1(Y,y_{0}) \bigr)$.
Then the distortion $\Delta^G_H$ is equivalent to the function
\[
   f(n) = \max \bigset{d_{\tilde Y}(\tilde{y}_0,h (\tilde{y}_0))}{\text{$h \in \pi_1(Y,y_0)$ and $d_{\tilde X}(\tilde{x}_0,g_*(h) (\tilde{x}_0)) \le n$}}.
\]
\end{cor}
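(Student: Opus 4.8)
The plan is to deduce this from the \v{S}varc--Milnor lemma --- the well-known fact recalled just above, that any orbit map of a group acting properly, cocompactly and isometrically on a geodesic space is a quasi-isometry --- together with a short bookkeeping lemma saying that a coarsely bi-Lipschitz change of the two ``inputs'' of a distortion-type function alters it only up to the equivalence $\sim$ of Definition~\ref{def:equivalentfunction}. There is no deep obstacle; the content is entirely in being careful that the displacement functions on the universal covers are only \emph{coarsely} equal to the relevant word lengths.

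First I would fix generating sets. Since $Y$ is compact, $\pi_1(Y,y_0)$ acts on $\tilde{Y}$ properly, cocompactly and isometrically by deck transformations, so it is finitely generated (this is part of the same standard fact, and in any case is needed for $\Delta^G_H$ to be defined); fix a finite generating set $\mathcal{A}$ for it and a finite generating set $\mathcal{S}$ for $G=\pi_1(X,x_0)$. As $g$ is $\pi_1$--injective, $g_*\colon\pi_1(Y,y_0)\to H$ is an isomorphism carrying $\mathcal{A}$ to a finite generating set $g_*(\mathcal{A})$ of $H$, whence $\abs{g_*(h)}_{g_*(\mathcal{A})}=\abs{h}_{\mathcal{A}}$ for all $h\in\pi_1(Y,y_0)$. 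Since $\Delta^G_H$ is independent of the chosen generating sets up to $\sim$, I compute it with $\mathcal{S}$ and $g_*(\mathcal{A})$ and rewrite it as a maximum over $\pi_1(Y,y_0)$:
\[
\Delta^G_H(n)=\max\bigset{\abs{h}_{\mathcal{A}}}{h\in\pi_1(Y,y_0)\ \text{and}\ \abs{g_*(h)}_{\mathcal{S}}\le n}.
\]

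Next I would apply the \v{S}varc--Milnor lemma twice. For the deck action of $G$ on $\tilde{X}$, the orbit map $\gamma\mapsto\gamma(\tilde{x}_0)$ is a quasi-isometry, so there is $\lambda\ge 1$ with
\[
\tfrac{1}{\lambda}\abs{\gamma}_{\mathcal{S}}-\lambda\ \le\ d_{\tilde{X}}\bigl(\tilde{x}_0,\gamma(\tilde{x}_0)\bigr)\ \le\ \lambda\abs{\gamma}_{\mathcal{S}}+\lambda\qquad(\gamma\in G);
\]
in particular $d_{\tilde{X}}(\tilde{x}_0,g_*(h)(\tilde{x}_0))$ and $\abs{g_*(h)}_{\mathcal{S}}$ agree up to a fixed affine error, uniformly in $h\in\pi_1(Y,y_0)$. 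For the deck action of $\pi_1(Y,y_0)$ on $\tilde{Y}$ one gets in the same way that $d_{\tilde{Y}}(\tilde{y}_0,h(\tilde{y}_0))$ and $\abs{h}_{\mathcal{A}}$ agree up to a fixed affine error. Properness of both actions also shows that for each $n$ the maxima defining $\Delta^G_H$ and $f$ range over finite nonempty sets, so both are well-defined non-decreasing functions.

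Finally, the core is the following elementary principle, which I would isolate as a lemma: if $\phi,\phi',\psi,\psi'$ are nonnegative functions on a set $I$ with $\inf_I\psi,\inf_I\psi'<\infty$ and there is $\lambda\ge1$ with $\tfrac{1}{\lambda}\phi-\lambda\le\phi'\le\lambda\phi+\lambda$ and $\tfrac{1}{\lambda}\psi-\lambda\le\psi'\le\lambda\psi+\lambda$ pointwise on $I$, then $F(n):=\sup\{\phi(i):\psi(i)\le n\}$ and $F'(n):=\sup\{\phi'(i):\psi'(i)\le n\}$ satisfy $F\sim F'$. The proof is immediate: if $\psi'(i)\le n$ then $\psi(i)\le\lambda n+\lambda^2$, hence $\phi(i)\le F(\lambda n+\lambda^2)$, hence $\phi'(i)\le\lambda F(\lambda n+\lambda^2)+\lambda$; taking the supremum over such $i$ gives $F'(n)\le\lambda F(\lambda n+\lambda^2)+\lambda$, i.e. $F'\preceq F$, and the reverse inequality is symmetric. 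Applying this with $I=\pi_1(Y,y_0)$, $\phi(h)=\abs{h}_{\mathcal{A}}$, $\phi'(h)=d_{\tilde{Y}}(\tilde{y}_0,h(\tilde{y}_0))$, $\psi(h)=\abs{g_*(h)}_{\mathcal{S}}$ and $\psi'(h)=d_{\tilde{X}}(\tilde{x}_0,g_*(h)(\tilde{x}_0))$ --- whose hypotheses are exactly the two \v{S}varc--Milnor estimates above --- identifies $F$ with $\Delta^G_H$ and $F'$ with $f$, so $\Delta^G_H\sim f$, as claimed. The only point requiring real care is precisely this last reduction: replacing word lengths by displacement functions on $\tilde{X}$ and $\tilde{Y}$ turns equalities into mere coarse equalities, so one must confirm that the operation ``maximum over a sublevel set'' is stable under such perturbations --- which is exactly what the comparison principle records, the non-decreasing monotonicity of $\Delta^G_H$ and $f$ being what makes the affine rescaling of the threshold $n$ harmless.
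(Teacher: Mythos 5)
Your proof is correct and follows exactly the route the paper intends: the corollary is stated there as an immediate consequence of the \v{S}varc--Milnor fact quoted just above it, and your argument simply fleshes this out by applying that fact to the deck actions on $\tilde{X}$ and $\tilde{Y}$ together with the elementary observation that a sublevel-set maximum is stable, up to the equivalence $\sim$, under affine perturbation of both the maximized quantity and the threshold. No gaps; the finiteness/properness remarks you include are the right points of care.
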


\bigskip
\bigskip

\section{The almost fiber surface of a subgroup of a 3-manifold group}
\label{sec:almostfiber}
In this section, we briefly review an important notion in this paper that is called the \emph{almost fiber surface}, and we will do some modification on its original definition in \cite{Sun18}.

The almost fiber surface $\Phi(H)$ of a finitely generated subgroup $H$ of a finitely generated 3-manifold group is introduced by the second author in \cite{Sun18}. In \cite{Sun18}, the author gives a complete characterization on which finitely generated subgroups of finitely generated 3-manifold groups are separable and shows that all information about separability of the subgroup can be obtained by examining the almost fiber surface. In Section~\ref{sec:techicalsection}, we will show that the distortion of the subgroup in the 3-manifold group depends only on the almost fiber surface.

To get into the definition of the almost fiber surface, we need some terminology.

\begin{defn}[virtual fiber subgroup, partial fiber subgroup]
\label{defn:vf, partiallyfibered}
Let $M$ be a compact orientable irreducible 3-manifold with empty or tori boundary, and we assume that it is either a hyperbolic 3-manifold or a Seifert manifold. Let $H$ be a finitely generated subgroup of $\pi_1(M)$, and let $M_H \to M$ be the covering space corresponding to the subgroup $H$.
\begin{enumerate}
    \item When $M$ is a hyperbolic 3-manifold, $H$ is called a \emph{virtual fiber subgroup} if $H$ is geometrically infinite. In this case, $M_H$ is homeomorphic to $\Sigma_H \times \R$ or a twisted $\R$--bundle $\Sigma_H \tilde{\times} \R$ for a compact surface $\Sigma_H$.
    \item When $M$ is a Seifert manifold, $H$ is called a \emph{virtual fiber subgroup} if the induced bundle structure on $M_H$ is an $\R$--bundle and the base surface $\Sigma_H$ of $M_H$ is compact. In this case, $M_H$ is homeomorphic to $\Sigma_H \times \R$ or a twisted $\R$--bundle $\Sigma_H \tilde{\times} \R$. The subgroup $H$ is called a \emph{partial fiber subgroup} if the induced bundle structure on $M_H$ is an $\R$--bundle and the base surface $\Sigma_H$ of $M_H$ is noncompact. In this case, by abusing notation, there exists a compact subsurface $\Sigma_H \subset M_H$ such
that the inclusion induces an isomorphism on fundamental groups (see \cite{Sun18}). Moreover, we can assume that $\Sigma_H$ intersects with each cylinder boundary component of $M_H$ along a circle and does not interesct with any plane boundary component of $M_H$.
\end{enumerate}
\end{defn}

Let $M$ be a compact orientable irreducible $3$-manifold with empty or tori boundary, with nontrivial torus decomposition and does not support the Sol geometry. Let $H$ be a finitely generated infinite index subgoup of $G =\pi_1(M)$. We are going to define the almost fiber surface $\Phi(H)$.

Let $M_H \to M$ be the covering space corresponding to $H$. Since $M$ has nontrivial torus decomposition, $M_H$ has an induced graph of space structure. Each elevation (i.e. a component of the preimage) of a piece of $M$ in $M_H$ is called a \emph{vertex space} or a \emph{piece} of $M_H$, and each elevation of a decomposition torus of $M$ in $M_H$ is called an \emph{edge space} of $M_H$. We call a piece of $M_H$ a virtual fiber piece or a partial fiber piece if it covers the corresponding piece of $M$ by a way described in Definition \ref{defn:vf, partiallyfibered}. We denote
the dual graph of $M_H$ by $G_H$. Each vertex $v \in G_H$ corresponds to a piece in $M_H$, and we denote it by $M^{v}_H$.

Since $H$ is finitely generated, there exists a finite union of pieces $M_H^c\subset M_H$, such that the inclusion $M_H^c \to M_H$ induces an isomorphism on fundamental groups, and we take $M_H^c$ to be the minimal such manifold. Let $G^{c}_H$ be the subgraph of $G_H$ dual with $M^{c}_H$.

Let $J^{c}_H$ be the set of vertices $v \in G^{c}_H$  such that $M^{v}_H \subset M^{c}_H$ is a virtual fiber or 
partial fiber piece and has non-trivial fundamental group. 

\begin{defn}[The original definition of almost fiber surface in \cite{Sun18}]
For each $v \in J^{c}_H$, we take a copy $\Sigma_{H}^{v}$ that is given in Definition~\ref{defn:vf, partiallyfibered}. For each cylinder edge
space $C \subset M_H$ that intersects with the surfaces in $\{\Sigma^{v}_H \}_{v \in J^{c}_H}$ along exactly two
circles, we isotopy these two surfaces near the boundary, so that they intersect with $C$ along the same circle and we paste them together along the circle. After doing all these pasting, we get the \emph{almost fiber surface} $\Phi(H)$ and it is naturally a subsurface of $M^{c}_H$. Each $\Sigma_H^v$ is called a piece of $\Phi(H)$.
\end{defn}

\begin{rem}
 It is possible that the almost fibered surface $\Phi(H)$ is disconnected. 
\end{rem}

To accommodate the statement in Theorem \ref{toriboundary} (3), we modify the definition of the almost fiber surface as the following.
\begin{defn}\label{modified}[Modified definition of almost fiber surface]
In the almost fiber surface $\Phi(H)$, it has some piece (lying in partial fiber pieces) that is homeomorphic to the annulus and parallel to the boundary of $\Phi(H)$. We delete these annulus pieces from $\Phi(H)$ to get the modified almost fiber surface, and we still denote it by $\Phi(H)$.
\end{defn}

For each component $S_i\subset \Phi(H)$ under the origninal definition, either the modification deletes some annuli neighborhood of some boundary components of $S_i$, or $S_i$ is homeomorphic to a annulus (consists of one or two pieces) and we delete it from $\Phi(H)$. In the first case, it clearly does not change the fundamental group of $S_i$, and does not change $\Delta^{\pi_1(M)}_{\pi_1(S_i)}$, but it affects the statement in Theorem \ref{toriboundary} (3). In the second case, the subgroup distortion $\Delta^{\pi_1(M)}_{\pi_1(S_i)}$ is linear (see Remark \ref{rem:vertexandedgegroup}).

\bigskip
\bigskip

\section{Distortion of a finitely generated subgroup is determined by the almost fiber part}
\label{sec:techicalsection}

In this section we prove the technical heart of this paper: Theorem \ref{technical}. Actually, we will prove that $$f\preceq \Delta^G_H\preceq \bar{f}$$ holds, here $\bar{f}$ is the superadditive closure of $f$ as in Definition \ref{def:superadditive}. Since surface subgroups of graph of mixed 3-manifold groups can only have linear, quadratic, exponential or double exponential distortions (by \cite{Ngu18a}), $\bar{f}\sim f$ holds, and Theorem \ref{technical} follows from the above inequality.

The proofs of the lower bound part and the upper bound part  of are given in Subsection~\ref{lowerbound} and Subsection~\ref{upperbound} respectively.

The proof will follow the same strategy as in \cite{Ngu18a}, i.e. apply Corollary~\ref{cor:GeometricDistortion}. More precisely, we first take a preferred Riemannian metric on $M$, and consider the covering space $M_H$ of $M$ corresponding to $H < \pi_1(M)$. Then we construct a Scott core $K\subset M_H$ (by \cite{Sco73}, here Scott core means that $K$ is a compact codimension-$0$ submanifold of $M_H$ such that the inclusion is a homotopy equivalence) with some nice property, take the universal cover $\tilde{M}$ of $M$ and take the preimage of $K$ in $\tilde{M}$ to get $\tilde{K}\subset \tilde{M}$. We lift the Riemannian metric to $\tilde{M}$ and take the induced path metric on $\tilde{M}$ and $\tilde{K}$ and denote them by $d_{\tilde{M}}$ and $d_{\tilde{K}}$ respectively. Then by Lemma \ref{cor:GeometricDistortion}, we have $$\max{\{d_{\tilde{K}}(x,y)\ |\ x,y\in \tilde{K} \text{\ and\ } d_{\tilde{M}}(x,y)\leq n\}} \sim \Delta^G_H.$$ To prove Theorem \ref{technical}, it suffices to prove that: $$f\preceq \max{\{d_{\tilde{K}}(x,y)\ |\ x,y\in \tilde{K} \text{\ and\ } d_{\tilde{M}}(x,y)\leq n\}}\preceq \bar{f}.$$

\subsection{Some preparation}\label{preparation}

At first, by passing to a finite cover $M'$ of $M$, we can assume that each Seifert piece $M_i\subset M$ is a product $F_i \times S^1$, and $M$ does not contain the twisted $I$-bundle over the Klein bottle (see Lemma~3.1 in \cite{PW14}). Then we will study the subgroup distortion of $H'=H\cap \pi_1(M')<\pi_1(M')$. Since $H'$ is a finite index subgroup of $H$, we have $\Delta^{\pi_1(M')}_{H'} \sim \Delta^{\pi_1(M)}_H$ and $\Phi(H')$ is a finite cover of $\Phi(H)$ (Lemma 3.6 of \cite{Sun18}). So it suffices to prove Theorem \ref{technical} for $H'<\pi_1(M')$, and we still denote the subgroup of the $3$-manifold group by $H<\pi_1(M)$.

{\bf Preparation Step I: A metric on $M$.}
\label{preparationstepI}
 Since the choice of length metrics does not affect the distortion, we will choose a convenient metric on $M$ as the following.

If $M$ is a mixed manifold, we take a nonpositive curved Riemannian metric $d$ on $M$ as in \cite{Leeb95}. In this case, each decomposition torus in $M$ is a totally geodesic subsurface, and the restriction of $d$ on each decomposition torus of $M$ is a flat metric.

If $M$ is a graph manifold, we take a Riemannian metric $d$ (may not be nonpositive curved) on $M$ such that each decomposition torus is a totally geodesic subsurface, and the restriction of $d$ on such a torus is a flat metric. 

The construction of the metric $d$ is given as the following. For each Seifert piece $M_i=F_i\times S^1$ of $M$, we fix a metric $d_i$ on $M_i$ (say a hyperbolic metric on the surface with geodesic boundary cross the standard metric on the circle); for each decomposition torus $T$, we fix a flat metric $d_T$ on it. Take a Seifert piece $M_i$, for simplicity, we assume that it is adjacent to a single decomposition torus $T$. we take a collar of $T$ in $M_i$ to get $T\times [0,1]\subset M_i$, with $T\subset \partial M_i$ being identified with $T\times \{1\}$, and let $M_i^{\text{int}}$ be the closure of the complement of $T\times [0,1]$ in $M_i$. Then we have $M_i=M_i^{\text{int}}\cup (T\times [0,1])$. Let $U$ be the open subset $M_i^{\text{int}}\cup (T\times [0,1/2))$ and let $V$ be the open subset $T\times (0,1]$. We take the restriction of the metric $d_i$ on $U$, and take the restriction of the metric $d_T\times d_{\text{Euc}}$ on $V$ (here $d_{\text{Euc}}$ denotes the standard metric on the interval). Then the partition of unity gives a metric on $M_i$, such that its restriction on a neighborhood of $T$ is $d_T\times d_{\text{Euc}}$. Under this metric, $T$ is a totally geodesic subsurface and the restriction on $T$ is flat. Since each decomposition torus $T$ is given a fixed flat metric $d_T$, we can paste the metrics on these pieces $M_i\subset M$ to a Riemannian metric on $M$, which is our desired metric $d$.


We can take a product metric $d_i$ on each Seifert piece $M_i=F_i\times S^1 \subset M$ as above, and take a truncated hyperbolic metric $d_i$ on each hyperbolic piece $M_i\subset M$ (they may not be the restriction of the metric $d$).
For the metric $d$ on $M$ and the metrics $d_i$ on pieces of $M$, $d|_{M_i}$ and $d_i$ are bilipschitz to each other. By Lemma 1.8 of \cite{Pau05}, there exists a constant $\kappa>1$, such that for any two points $x,y\in \tilde{M}$ lying in the same piece $\tilde{M}_i$, we have $$\frac{1}{\kappa}d_i(x,y)\leq d(x,y)\leq \kappa d_i(x,y).$$ Moreover, we can also assume that for any path $\gamma$ in $M_i$, $$\frac{1}{\kappa}|\gamma|_{d_i}\leq |\gamma|\leq \kappa |\gamma|_{d_i}$$ holds. Here $|\gamma|$ denotes the length of $\gamma$ under the $d$-metric.

{\bf Preparation Step II: A Scott core of $M_H$.}
Let $M_H$ be the covering space of $M$ corresponding to $H < \pi_1(M)$, then it has an induced graph of space structure. Since $H$ is finitely generated, there exists a finite union of pieces $M_H^c\subset M_H$, such that the inclusion $M_H^c \subset M_H$ induces an isomorphism on $\pi_1$, and $M_H^c$ is the minimal such manifold.

For each piece $M_{H,i}\subset M_H^c$, we take a (compact) Scott core $K_i\subset M_{H,i}$ (\cite{Sco73}) such that the following holds:
\begin{enumerate}
  \item For each component of $E\subset \partial (M_{H,i})\cap \partial (M_H^c)$, $K_i\cap E$ is empty.
  \item For each torus component $T$ of $\partial (M_{H,i})$ that is contained in $\text{int}(M_H^c)$, $T$ is contained in $K_i$.
  \item For each cylinder component $C$ of $\partial (M_{H,i})$ that is contained in $\text{int}(M_H^c)$, $C\cap K_i$ is a convex annulus in $C$. More precisely, since the restriction metric of $d$ on each decomposition torus is flat, the metric on $C$ is a geometric product $S^1\times \mathbb{R}$ (up to rescaling the metric on $S^1$), thus we have $C\cap K_i=S^1\times [a_i,b_i]$.
  \item For each plane component $P$ of $\partial (M_{H,i})$ that is contained in $\text{int}(M_H^c)$, $P\cap K_i$ is a convex set in the Euclidean plane $P$.
\end{enumerate}

Then we union these $K_i$'s together in $M_H^c$ to get a compact set $K'$. For each edge space $E\subset \text{int}(M_H^c)$, let $K_i$ and $K_j$ be the Scott cores of the pieces of $M_H^c$ adjacent to $E$ (it is possible that $K_i=K_j$). If $E$ is a torus, then $K_i\cap E=K_j\cap E=E$ holds and we do no further modification. If $E$ is a cylinder or a plane, we add the convex closure of $(K_i\cap E)\cup (K_j\cap E)$ in $E$ to $K$. (For example, in the cylinder case, we have $K_i\cap E=S^1\times [a_i,b_i]$ and $K_j\cap E=S^1\times [a_j,b_j]$, then we add $S^1\times [\min{\{a_i,a_j\}},\max{\{b_i,b_j\}}]$ to $K'$.) Let $K$ be an $\epsilon$-neighborhood of the above expansion of $K'$, then it is a Scott core of $M_H$. The important features of $K$ are:
\begin{enumerate}
  \item For each edge space $E\subset M_H$, $K\cap E$ is either empty or a convex subset of $E$ (under the restriction of both $d$ and $d_i$ metric).
  \item Since $K$ is compact, there exists $D>0$, such that for and edge space $E\subset M_H$ that intersects with $K$ and any two points $x,y\in K\cap E$, $d_E(x,y)<D$ holds.
\end{enumerate}

\begin{rem}
  Actually, for a virtual fiber or partial fiber piece $M_{H,i}\subset M_H^c$, we can assume that $K_i$ is a surface cross interval. We can also assume that $\Phi(H)$ is contained in $K$.
\end{rem}

\bigskip

\subsection{Two lemmas on metric properties of geometric pieces}\label{subsec:lemmas}

To prove Theorem~\ref{technical}, we need two lemmas.

The first lemma is parallel to Lemmas 4.4 and 4.6 of \cite{Ngu18a}, which describes the metric property of the preimage of $K_i\subset M_{H,i}$ in its universal cover, in the case that $M_{H,i}$ is either an $S^1$-bundle piece (that covers a Seifert piece of $M$) or a geometrically finite piece (that covers a hyperbolic piece of $M$).

\begin{lem}\label{easypiece}
  Let $M_{H,i}$ be an $S^1$-bundle piece or a geometrically finite piece of $M_H^c$, let $\tilde{M}_i$ be its universal cover, and let $\tilde{K}_i$ be the preimage of $K_i$ in $\tilde{M}_i$. Then there exists a constant $R_i$ depending only on $M_{H,i}$ and $K_i$ such that the following holds.

  Let $E$ and $E'$ be two boundary components of $\tilde{M}_i$ that intersect with $\tilde{K}_i$, and let $x,y$ be two points in $E$ and $E'$ respectively. Then there exists a path $\alpha$ in $\tilde{M}_i$ connecting $x$ and $y$, and a path $\beta$ in $\tilde{K}_i$ connecting some point $x'\in E\cap \tilde{K}_i$ and $y'\in E'\cap \tilde{K}_i$, such that the following holds.
  \begin{enumerate}
    \item Both $x'$ and $y'$  lie in the $R_i$-neighborhood of $\alpha$ (under $d_i$-metric).
    \item $|\alpha|_{d_i} = d_i(x,y)$.
    \item $|\beta|_{d_i} < R_id_i(x',y')$.
  \end{enumerate}
\end{lem}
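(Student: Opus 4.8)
The plan is to treat the two cases of $M_{H,i}$ separately, since the geometry of an $S^1$-bundle piece and that of a truncated hyperbolic (geometrically finite) piece are quite different, but in both cases the strategy is the same: choose $\alpha$ to be a $d_i$-geodesic from $x$ to $y$ (this immediately gives (2)), then use a coarse retraction of $\tilde M_i$ onto $\tilde K_i$ to produce the path $\beta$, controlling its length by the distance between its endpoints.

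\textbf{The $S^1$-bundle case.} Here $M_{H,i}$ covers a Seifert piece, and after the finite cover taken in the preparation step we may assume the base Seifert piece is a product $F\times S^1$; its universal cover is $\tilde M_i\cong \tilde F_i\times\R$, a product of a (possibly infinite-area) hyperbolic surface-with-boundary and a line, with a product metric (up to bilipschitz). The Scott core $K_i$ is, by the Remark, a ``surface cross interval'': its preimage $\tilde K_i$ is $\tilde P_i\times[a,b]$ (after rescaling), where $\tilde P_i\subset\tilde F_i$ is a convex subset of the hyperbolic plane bounded by geodesics and horocyclic/equidistant arcs, so there is a nearest-point projection $\pi_{\tilde P_i}\colon\tilde F_i\to\tilde P_i$ that is $1$-Lipschitz (CAT($0$) projection onto a convex set), together with the linear projection $[a,b]\hookleftarrow\R$; combining, we get a $1$-Lipschitz projection $\rho\colon\tilde M_i\to\tilde K_i$. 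Each boundary component $E$ of $\tilde M_i$ meeting $\tilde K_i$ is of the form $\tilde\ell\times\R$ for a boundary geodesic $\tilde\ell$ of $\tilde F_i$; project $x\in E$ and $y\in E'$ via $\rho$ to $x'=\rho(x)\in E\cap\tilde K_i$ and $y'=\rho(y)\in E'\cap\tilde K_i$ (these do land in the boundary pieces because $\rho$ respects the product structure and sends a boundary geodesic's $\R$-line into $E\cap\tilde K_i$). Then $d_i(x,x')$ and $d_i(y,y')$ are bounded by the ``width'' of $K_i$ in the base direction plus half the interval length — a constant depending only on $K_i$ — so (1) holds with $R_i$ at least this constant. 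For $\beta$: since $\tilde K_i$ is convex and $\tilde P_i$ is a convex hyperbolic region bounded by finitely many geodesics, a shortest path in $\tilde P_i$ between boundary arcs has length comparable to the ambient distance between its endpoints; more simply, $\beta$ can be taken to be the $\tilde K_i$-geodesic between $x'$ and $y'$, and since $\tilde K_i$ is itself CAT($0$) the intrinsic and extrinsic distances agree, giving $|\beta|_{d_i}=d_i(x',y')<R_i\,d_i(x',y')$. So in this case (1)--(3) are essentially immediate from convexity.

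\textbf{The geometrically finite case.} Here $M_{H,i}$ covers a hyperbolic piece, $\tilde M_i$ is (a bilipschitz copy of) a truncated $\Hyp^3$ with horoball boundaries removed, and $H$ restricted to this piece is geometrically finite, so there is a convex core and the limit set is not everything; $K_i$ is a compact Scott core whose preimage $\tilde K_i$ is a neighborhood of the preimage of the convex core, and the key point is that $\tilde K_i$ is quasiconvex in $\tilde M_i$ — this is exactly the content of Lemmas 4.4 and 4.6 of \cite{Ngu18a}, which I would cite and adapt. Given quasiconvexity (with constant $Q$ depending only on $M_{H,i},K_i$), take $\alpha$ a $d_i$-geodesic from $x$ to $y$; then $\alpha$ stays in the $Q$-neighborhood of $\tilde K_i$, so picking $x'\in E\cap\tilde K_i$, $y'\in E'\cap\tilde K_i$ within $Q$ of $x,y$ gives (1) and also $d_i(x',y')\le d_i(x,y)+2Q$. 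For $\beta$, use that a geodesic metric space that is quasiconvex (equivalently, the inclusion $\tilde K_i\hookrightarrow\tilde M_i$ is a quasi-isometric embedding onto its image) has intrinsic distance a linear function of the extrinsic one; concretely, $d_{\tilde K_i}(x',y')\le \lambda\, d_i(x',y')+c$, and absorbing the additive constant (legitimate since all the boundary components meeting $\tilde K_i$ are at definite distance from one another, so $d_i(x',y')$ is bounded below) yields a path $\beta$ in $\tilde K_i$ with $|\beta|_{d_i}<R_i\,d_i(x',y')$ for a suitable $R_i$. Finally take $R_i$ to be the maximum of the constants produced in the two cases (only one applies to a given piece).

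\textbf{Main obstacle.} The routine parts are the product-geometry estimates; the real work — and the part I would lean on \cite{Ngu18a} for — is establishing quasiconvexity of $\tilde K_i$ in $\tilde M_i$ in the geometrically finite case with a constant depending only on $M_{H,i}$ and $K_i$, and, slightly subtly, checking that the nearest-point-type projections actually send a point of $E$ to a point of $E\cap\tilde K_i$ rather than to some other face of $\tilde K_i$ (for cusped/truncated pieces one must be careful near the horospherical boundary, choosing $K_i$ as in Preparation Step II so that its intersection with each boundary face is convex). The absorption of additive constants into multiplicative ones in (3) also requires knowing $d_i(x',y')$ is bounded below by a positive constant whenever $E\neq E'$, which follows because $K_i$ is compact and meets only finitely many, pairwise-disjoint boundary faces; when $E=E'$ the statement is about two points on the same face and is handled the same way using convexity of $E\cap\tilde K_i$.
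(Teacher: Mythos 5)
There is a genuine gap, and it lies precisely where the main geometric content of the lemma sits. In both of your cases you attempt to verify condition (1) by bounding $d_i(x,x')$ and $d_i(y,y')$ by a constant depending only on $K_i$, and then inferring that $x',y'$ lie near $\alpha$ because $x,y\in\alpha$. But such a bound cannot hold: $E$ is a boundary plane of $\tilde M_i$ (unbounded), while $E\cap\tilde K_i$ is compact, so $x$ can be arbitrarily far from every point of $E\cap\tilde K_i$. In the $S^1$-bundle case, the nearest-point projection $\rho$ moves points of $E=\tilde\ell\times\R$ by an amount governed by how far along $\tilde\ell$ and along the $\R$-factor one is, which is unbounded; your claim that $d_i(x,\rho(x))$ is bounded by a ``width'' of $K_i$ is false. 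In the geometrically finite case, the step ``$\alpha$ stays in the $Q$-neighborhood of $\tilde K_i$'' does not follow from quasiconvexity, which only controls geodesics between points \emph{of} $\tilde K_i$; here $x,y\notin\tilde K_i$. And ``pick $x',y'$ within $Q$ of $x,y$'' is not possible for the same reason.

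The missing idea is that condition (1) does \emph{not} ask for $x',y'$ near $x,y$, only near the geodesic $\alpha$ --- which is a far weaker demand because $\alpha$ may leave a neighborhood of $x$ and $y$ and pass through a fixed ``bottleneck'' region between the two boundary components. Establishing that any geodesic from $E$ to $E'$ must pass near a fixed pair of points $p\in E$, $q\in E'$ (the endpoints of the shortest geodesic between $E$ and $E'$) is the crux. In the geometrically finite case this is a consequence of the isolated-flats geometry of $(\tilde M_i,d_i)$: the paper invokes Proposition~8 of \cite{HK09}, which supplies a constant $\epsilon_1$ such that every geodesic from $E$ to $E'$ enters $\mathcal{N}_{\epsilon_1}(p)$ and $\mathcal{N}_{\epsilon_1}(q)$. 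In the $S^1$-bundle case the analogous fact is that $\tilde F_i$ is bilipschitz to a fattened tree (Fact~1 in the paper, via \cite{BN08}), so any path between two boundary lines must fellow-travel the shortest geodesic $[p_0,p_1]$. Only after this ``bottleneck'' observation do quasiconvexity of $\tilde K_i$ (from Corollary~1.6 of \cite{Hru10}, not from \cite{Ngu18a}) and Lemma~\ref{lem:HruskaProp9.4} together produce points $x'\in E\cap\tilde K_i$, $y'\in E'\cap\tilde K_i$ that are close to $p,q$ and hence to $\alpha$. Finally, your claim that the intrinsic CAT($0$) metric of $\tilde K_i$ equals the extrinsic one would require $\tilde K_i$ to be convex in $\tilde M_i$, which it need not be; the paper instead argues (3) from undistortion of $\tilde K_i$ in $\tilde M_i$ together with the positive lower bound $\rho$ on pairwise distances between distinct boundary planes, which lets the additive constant be absorbed.
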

\begin{proof}
Let $M_i$ be the piece of $M$ covered by $M_{H,i}$. Now we consider two cases:

{\bf{Case 1:}} $M_{H,i}$ is a geometrically finite piece of $M^{c}_H$.

We recall that $K_i$ is a compact Scott core of $M_{H,i}$.
By our assumption, $\pi_1(K_i)$ is a geometrically finite subgroup of $\pi_1(M_i)$. We note that $(\tilde{M}_i, d_i)$ is a $CAT(0)$ space. By Corollary~1.6 in \cite{Hru10}, the orbit space $\pi_1(K_i) \cdot \tilde{x}_{0}$ is quasiconvex in $(\tilde{M}_i,d_i)$ (in the sense that there exists a constant $k$ such that every geodesic of $\tilde{M}_i$ connecting two points of $\pi_1(K_i) \cdot \tilde{x}_0$ lies in the $k$--neighborhood of $\pi_1(K_i) \cdot \tilde{x}_0$). Thus, there exists a constant $\epsilon_0$ such that $\tilde{K}_i$ is $\epsilon_0$--quasiconvex in $\tilde{M}_i$.

By applying Lemma~\ref{lem:HruskaProp9.4} to $\pi_1(K_i)$ and the fundamental group of each torus boundary of $M_i$, we have the following fact: For any $r>0$, there exists $r' =r'(r)>0$ such that whenever $\tilde{T}$ is an arbitrary boundary plane of $\tilde{M}$ with nonempty intersection with $\tilde{K}_i$ and $x \in \mathcal{N}_{r}(\tilde{T}) \cap \mathcal{N}_{r}(\tilde{K}_i)$ , then $x \in \mathcal{N}_{r'}(\tilde{T} \cap \tilde{K}_i)$. Here we use that $K_i$ intersects with only finitely many boundary components of $M_{H,i}$.

We note that $(\tilde{M}_i,d_i)$ is a $\CAT(0)$ space with isolated flats. Let $\epsilon_{1}$ be the positive constant given by Proposition~8 \cite{HK09}. Let $[p,q]$ be a geodesic of shortest length from $E$ to $E'$. Then every geodesic from $E$ to $E'$ must intersect with the $\epsilon_{1}$-neiborhoods of both $p$ and $q$.

Let $\alpha$ be a geodesic in $(\tilde{M}_i,d_i)$ connecting $x \in E$ to $y \in E'$, it follows that $\{p,q\} \in \mathcal{N}_{\epsilon_1}(\alpha)$. Since $\alpha$ is a geodesic, (2) is confirmed.
We are going to establish (1). We note that $E \cap \tilde{K}_i \neq \emptyset$ and  $E' \cap \tilde{K}_i \neq \emptyset$. We choose a point in $E \cap \tilde{K}_i \neq \emptyset$ and choose a point in $E' \cap \tilde{K}_i \neq \emptyset$, and let $\gamma$ be a geodesic connecting these two points. It follows that $\{p,q\} \in \mathcal{N}_{\epsilon_1}(\gamma)$. Thus, there exist points $a$ and $b$ in $\gamma$ such that $d_{i}(a,p) \le \epsilon_{1}$ and $d_{i}(b,q) \le \epsilon_{1}$. Hence $a \in \mathcal{N}_{\epsilon_1}(E)$ and $b \in \mathcal{N}_{\epsilon_1}(E')$. We note that the end points of $\gamma$ belong to $\tilde{K}_i$. Using quasiconvexity of $\tilde{K}_i$, we have $a,b \in \mathcal{N}_{\epsilon_0}(\tilde{K}_i)$. Thus there exists a constant $\epsilon_{2}$ depending on $\epsilon_{0}$ and $\epsilon_{1}$ such that $a \in \mathcal{N}_{\epsilon_2}(E) \cap \mathcal{N}_{\epsilon_2}(\tilde{K}_i)$ and $b \in \mathcal{N}_{\epsilon_2}(E') \cap \mathcal{N}_{\epsilon_2}(\tilde{K}_i)$ (we may choose $\epsilon_2 = \epsilon_0 + \epsilon_1$).
Let $r' = r'(\epsilon_2)$ be the constant given by Lemma \ref{lem:HruskaProp9.4}, with respect to $\epsilon_2$. It follows that $a \in \mathcal{N}_{r'}(E \cap \tilde{K}_i)$ and $b \in \mathcal{N}_{r'}(E' \cap \tilde{K}_i)$. Thus, $d_{i}(a,x') \le r'$ and $d_{i}(b,y') \le r'$ for some points: $x' \in E \cap \tilde{K}_i$ and $y' \in E' \cap \tilde{K}_i$. Let $\beta$ be a shortest length in $\tilde{K}_i$ connecting $x'$ to $y'$. Since $d_{i}(\beta(0),p) = d_{i}(x',p) \le d_{i}(x',a) +d_{i}(a,p) \le r'+\epsilon_1$ and $p \in \mathcal{N}_{\epsilon_1}(\alpha)$, it follows that $\beta(0) \in \mathcal{N}_{r'+2\epsilon_1}(\alpha)$. Similarly, since  $d_{i}(\beta(1),q) = d_{i}(y',q) \le d_{i}(y',b) +d_{i}(b,q) \le r'+\epsilon_1$ and $q \in \mathcal{N}_{\epsilon_1}(\alpha)$, it follows that $\beta(1) \in \mathcal{N}_{r'+2\epsilon_1}(\alpha)$. Let $R_i =r'+2\epsilon_1$, then item (1) is verified.

We are going to verify (3). 
Since $\tilde{K}_i$ is undistorted in $\tilde{M}_i$, there exists a constant $R>0$ such that for any $a, b \in \tilde{M}_i$, $d_{\tilde{K}_i}(a,b) \le Rd_{i}(a,b) + R$ holds. Let $\rho$ be the lower bound of the $d_i$--distance for any pair of boundary planes of $\tilde{M}_i$. We have that $\abs{\beta}_{d_i} = d_{\tilde{K}_i}(x',y') \le R\,d_{i}(x',y') + R \le R\,d_{i}(x',y') + \frac{R}{\rho}d_{i}(x',y') = (R + \frac{R}{\rho})d_{i}(x',y')$. We may need to enlarge the constant $R_i$ to make sure that $R_i$ is bigger than $R+ \frac{R}{\rho}$. 

{\bf{Case 2:}} $M_{H,i}$ is a $S^1$--bundle piece.

 In this case, we recall  that $M_i = F_{i} \times S^1$ where $F_i$ is a hyperbolic surface with boundary. Then we have $\tilde{M}_i=\tilde{F}_i\times \mathbb{R}$ and we identify $\tilde{F}_i$ with $\tilde{F}_i\times \{0\}\subset \tilde{M}_i$.
We state here some facts that will be used in the rest of the proof.

{\bf{Fact~1:}}
$(\tilde{F}_i, d_{\tilde{F}_i})$ is bilipschitz homeomorphic to a fattened
tree (see the paragraph after Lemma~1.1 in \cite{BN08}). Thus, there exists $A_{0} >0$ such that the following holds. Let $\ell$ and $\ell'$ be two distinct boundary lines in $\tilde{F}_i$. Let $[p,p']$ be a geodesic of shortest length from $\ell$ to $\ell'$. If $\tau$ is a path in $\tilde{F}_i$ connecting a point in $\ell$ to a point in $\ell'$ then $[p,p'] \subset \mathcal{N}_{A_0}(\tau)$ where $\mathcal{N}_{A_0}(\tau)$ is the $A_0$--neighborhood of $\tau$ with respect to the $d_{\tilde{F}_i}$--metric.

{\bf{Fact~2:}} Let $proj(K_i)$ be the projection of $K_i$ into the base surface $F_i$ of $M_i$ (under the composition $M_{H,i} \to M_{i} \to F_{i}$).
By applying Lemma~\ref{lem:HruskaProp9.4} to $proj_*(\pi_1(K_i))$ and fundamental groups of boundary circles of $F_{i}$, and using the fact $\tilde{F}_i$ is a fattened tree, we have the following: There exists a constant $\delta >0$ such that the following holds.  Let $E$ and $E'$ be any two distinct boundary planes of $\tilde{M}_i$ such that they have non-empty intersection with $\tilde{K}_i$. Let $\ell$ and $\ell'$ be two boundary components of $\tilde{F}_i$ such that $\ell \subset E$ and $\ell' \subset E'$. Let $[p,p']$ be a geodesic of shortest length from $\ell$ to $\ell'$. Then $d( p,\ell \cap \tilde{K}_i) \le \delta$ and $d(p', \ell' \cap \tilde{K}_i) \le \delta$. (Its proof is similar to the previous geometrically finite case.)

Since $M_{H,i}$ is an $S^1$--bundle, it follows that $\partial (M_{H,i})$ consists of only tori and cylinders. Let $T$ be a torus component of $\partial (M_{H,i})$. If $T$ is a component of $\partial (M^{c}_H)$ then $T \cap K_i = \emptyset$; if $T$ is not a component of $\partial (M^{c}_H)$, then $T \cap K_{i} =T$. Let $C$ be a cylinder component of $\partial (M_{H,i})$. If $C$ is a component of $\partial (M^{c}_H)$, then $C \cap K_i = \emptyset$; if $C$ is not a component of $\partial (M^{c}_H)$, then $C \cap K_{i} = S^{1} \times [a_i,b_i]$, by the convexity of $K_i\cap \partial (M_{H,i})$.


We are now going to construct a path $\beta$ satisfying (1). We will use Fact~1 and Fact~2 here. 
Let $\alpha$ be a geodesic in $(\tilde{M}_i,d_i)$ connecting $x \in E$ to $y \in E'$.
Let $\alpha_{\tilde{F}_i}$ be the projection of $\alpha$ on the first factor $\tilde{F}_i$ of $\tilde{M}_i$. Let $\ell_0$ and $\ell_1$ be the boundary components of $\tilde{F}_i$ such that $\ell_0 \subset E$ and $\ell_1 \subset E'$ hold respectively. Let $[p_0,p_1]$ be a geodesic of shortest length from $\ell_0$ to $\ell_1$. According to the Fact~1, we have $[p_0,p_1] \subset \mathcal{N}_{A_0}(\alpha_{\tilde{F}_i})$. It follows that there exist $a,b \in \alpha_{\tilde{F}_i}$ such that $d_{\tilde{F}_i}(p_0,a) \le A_0$ and $d_{\tilde{F}_i}(p_1,b) \le A_0$. Using Fact~2, there exist points $u_0 \in \ell_0 \cap \tilde{K}_i$ and $u_1 \in \ell_1 \cap \tilde{K}_i$ such that $d_{i}(p_0, u_0) \le \delta$ and $d_{i}(p_1,u_1) \le \delta$. Thus,
\[
d_{i}(u_0, a) \le d_{i}(u_0, p_0) + d_{i}(p_0, a) \le \delta + A_0
\] and
\[
d_{i}(u_1, b) \le d_{i}(u_1, p_1) + d_{i}(p_1, b) \le \delta + A_0
\]
We choose $s_0 \in \R$ and $s_1 \in \R$ such that $(a, s_0) \in \alpha$ and $(b, s_1) \in \alpha$. Since $E\cap \tilde{K}_i$ and $E'\cap \tilde{K}_i$ are both union of $\mathbb{R}$-fibers, we have that $(u_0,s_0) \in E \cap \tilde{K}_i$ and $(u_1,s_1) \in E' \cap \tilde{K}_i$, while
\[
d_{i}\bigl ( (u_0,s_0), \alpha \bigr) \le d_{i} \bigl ((u_0,s_0), (a, s_0) \bigr ) \le d_{\tilde{F}_{i}}(u_0,a) \le \delta + A_0
\]
and 
\[
d_{i}\bigl ( (u_1,s_1), \alpha \bigr) \le d_{i} \bigl ((u_1,s_1), (b, s_1) \bigr ) \le d_{\tilde{F}_{i}}(u_1,b) \le  \delta + A_0.
\]
Let $\beta$ be a shortest path in $\tilde{K}_i$ connecting $x'=(u_0, s_0)$ to $y'=(u_1,s_1)$. If we choose $R_i > \delta + A_0$, it is easy to see that $\beta$ satisfies (1). The path $\alpha$ satisfies (2) since it is a geodesic in $(\tilde{M}_i,d_i)$. For (3), the proof is done by following the same argument as in the last paragraph in the proof of the lemma in geometrically finite case.
\end{proof}

The following lemma describes the metric property of the preimage of $K_i\subset M_{H,i}$ in its universal cover, in the case that $M_{H,i}$ is a partial fiber piece (that covers a Seifert piece of $M$).

\begin{lem}\label{hardpiece}
  Let $M_{H,i}$ be a partial fiber piece of $M_H^c$, let $\tilde{M}_i$ be its universal cover, and let $\tilde{K}_i$ be the preimage of $K_i$ in $\tilde{M}_i$. Then there exists a constant $R_i$ depending only on $M_{H,i}$ and $K_i$ such that the following holds.

  Let $E$ and $E'$ be two boundary components of $\tilde{M}_i$ that intersect with $\tilde{K}_i$, and let $x,y$ be two points in $E$ and $E'$ respectively. Then there exists a path $\alpha$ in $\tilde{M}_i$ connecting $x$ and $y$, and a path $\beta$ connecting some points $x'\in E$ and $y'\in E'$, such that the following holds.
  \begin{enumerate}
    \item The projection of $x'$ and $y'$ in the base surface of $\tilde{M}_i$ lie in the projection of $\tilde{K}_i$. 
    \item Both $x'$ and $y'$ are contained in the $R_i$-neighborhood of $\alpha$ (under $d_i$ metric).
    \item $|\alpha|_{d_i}=d_i(x,y)$.
    \item $|\beta|_{d_i}<R_id_i(x',y')$.
  \end{enumerate}

\end{lem}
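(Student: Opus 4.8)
The plan is to mimic the two cases of Lemma~\ref{easypiece}, but now handle the extra complication that a partial fiber piece $M_{H,i}$ covers a Seifert piece $M_i=F_i\times S^1$ in an $\R$--bundle fashion, so its universal cover still splits as $\tilde M_i=\tilde F_i\times\R$, and $K_i$ may be taken (by the Remark after Preparation Step~II) to be a surface-cross-interval. First I would record the product structure: write $\tilde M_i=\tilde F_i\times\R$, identify $\tilde F_i$ with $\tilde F_i\times\{0\}$, and note that each boundary plane $E$ of $\tilde M_i$ is a product $\ell\times\R$ for a boundary line $\ell\subset\tilde F_i$, and that $E\cap\tilde K_i$ is a union of $\R$--fibers over $\ell\cap\mathrm{proj}(\tilde K_i)$ since $K_i$ was arranged to meet each cylinder boundary component along a convex annulus $S^1\times[a_i,b_i]$ and to avoid plane boundary components. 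Thus condition~(1) of the lemma — that the projections of $x'$ and $y'$ lie in $\mathrm{proj}(\tilde K_i)$ — is exactly the fiberwise analogue of the ``lie in $\tilde K_i$'' condition in Lemma~\ref{easypiece}, and it is precisely why the statement here is phrased in terms of projections rather than membership in $\tilde K_i$.

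Next, as in Fact~1 and Fact~2 of the proof of Lemma~\ref{easypiece} Case~2, I would invoke that $(\tilde F_i,d_{\tilde F_i})$ is bilipschitz to a fattened tree (\cite{BN08}), giving a constant $A_0$ so that for any two distinct boundary lines $\ell_0,\ell_1\subset\tilde F_i$, the shortest geodesic $[p_0,p_1]$ between them lies in the $A_0$--neighborhood of the projection of any path joining $\ell_0$ to $\ell_1$. Then, applying Lemma~\ref{lem:HruskaProp9.4} to $\mathrm{proj}_*(\pi_1(K_i))$ and the boundary-circle subgroups of $\pi_1(F_i)$ together with the fattened-tree structure, I get a constant $\delta$ with $d(p_0,\ell_0\cap\mathrm{proj}(\tilde K_i))\le\delta$ and $d(p_1,\ell_1\cap\mathrm{proj}(\tilde K_i))\le\delta$; here one uses that $K_i$ meets only finitely many boundary components of $M_{H,i}$. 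Given $x\in E$, $y\in E'$, I take $\alpha$ to be a geodesic in $(\tilde M_i,d_i)$ from $x$ to $y$ (so (3) is immediate), project it to $\alpha_{\tilde F_i}\subset\tilde F_i$, find $a,b\in\alpha_{\tilde F_i}$ within $A_0$ of $p_0,p_1$, pull back through $\delta$ to points $u_0\in\ell_0\cap\mathrm{proj}(\tilde K_i)$ and $u_1\in\ell_1\cap\mathrm{proj}(\tilde K_i)$, choose fiber heights $s_0,s_1$ with $(a,s_0),(b,s_1)\in\alpha$, and set $x'=(u_0,s_0)\in E$, $y'=(u_1,s_1)\in E'$. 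Since $d_i((u_0,s_0),\alpha)\le d_{\tilde F_i}(u_0,a)\le\delta+A_0$ and likewise for $y'$, choosing $R_i>\delta+A_0$ gives (2), and (1) holds by construction.

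For (4) I would take $\beta$ to be a shortest path in $\tilde M_i$ from $x'$ to $y'$ whose projection stays in $\mathrm{proj}(\tilde K_i)$ — equivalently, work in the preimage $\mathrm{proj}^{-1}(\mathrm{proj}(\tilde K_i))\cong\widetilde{\mathrm{proj}(K_i)}\times\R$, whose fundamental group is $\pi_1(K_i)$; here I crucially use that $\mathrm{proj}(K_i)$ is an incompressible subsurface of $F_i$, hence $\widetilde{\mathrm{proj}(K_i)}$ is undistorted (quasiconvex) in $\tilde F_i$, so there is $R>0$ with $d_{\widetilde{\mathrm{proj}(K_i)}}(u_0,u_1)\le R\,d_{\tilde F_i}(u_0,u_1)+R$, and then bound the fiber contribution $|s_0-s_1|$ against $d_i(x',y')$ by the same argument as in Lemma~\ref{easypiece}: the $d_i$--distance between distinct boundary planes of $\tilde M_i$ is bounded below by some $\rho>0$, so $R$ can be absorbed to get $|\beta|_{d_i}\le(R+R/\rho+1)\,d_i(x',y')$ (or similar), and enlarging $R_i$ finishes (4). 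The one genuine subtlety — and the step I expect to be the main obstacle — is making precise that ``the projection of $\beta$ lies in $\mathrm{proj}(\tilde K_i)$'' yields a path genuinely inside the right quasiconvex subspace and that its length is controlled: the point is that $\tilde K_i$ itself, being a surface-cross-interval sitting over $\mathrm{proj}(\tilde K_i)$, is quasiconvex in $\tilde M_i$ because $\mathrm{proj}(\tilde K_i)$ is quasiconvex in the fattened tree $\tilde F_i$ and the $\R$--factor is isometrically embedded; once that is established the length estimate is routine. The remaining items (1)--(3) and (2) are formal once the product structure and the fattened-tree facts are in place, so the proof really is parallel to Lemma~\ref{easypiece} Case~2 with ``$\tilde K_i$'' replaced by ``$\mathrm{proj}(\tilde K_i)\times\R$'' throughout.
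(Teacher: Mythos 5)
Your construction is essentially the one in the paper: take $\alpha$ to be a geodesic, project to $\tilde F_i$, use Fact~1 to find $a,b$ near the shortest segment $[p_0,p_1]$, use Fact~2 (via Lemma~\ref{lem:HruskaProp9.4}) to find $u_0,u_1\in\mathrm{proj}(\tilde K_i)$ within $\delta$ of $p_0,p_1$, set $x'=(u_0,s_0)$, $y'=(u_1,s_1)$ with $s_0,s_1$ read off from $\alpha$, and take $\beta$ to be a path whose projection is a shortest path in $\mathrm{proj}(\tilde K_i)$ with linearly interpolated fiber coordinate, with the length estimate for (4) running exactly as in Case~1 of Lemma~\ref{easypiece}.

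However, your opening structural remark is backwards. You claim that $E\cap\tilde K_i$ is ``a union of $\R$--fibers over $\ell\cap\mathrm{proj}(\tilde K_i)$'' and present this as the reason condition~(1) is phrased in terms of projections. That is the $S^1$-bundle situation of Lemma~\ref{easypiece} Case~2, not the partial fiber situation. Here the $\R$-factor is the unwound $S^1$-fiber of $M_i=F_i\times S^1$, and the Scott core $K_i$ meets each cylinder component of $\partial M_{H,i}$ in a \emph{compact} annulus $S^1\times[a_i,b_i]$ (Preparation Step~II), so $K_i$ is bounded in the fiber direction and $\tilde K_i$ does \emph{not} contain whole $\R$-fibers. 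This is precisely why the fiber heights $s_0,s_1$ coming from $\alpha$ may land the points $(u_0,s_0)$, $(u_1,s_1)$ outside $\tilde K_i$, and hence why the lemma only requires $\mathrm{proj}(x'),\mathrm{proj}(y')\in\mathrm{proj}(\tilde K_i)$ and why $\beta$ cannot be required to lie in $\tilde K_i$ (which is the content of the Remark immediately after the lemma). Your construction is unaffected since it never actually uses the false claim — it works throughout with $\mathrm{proj}^{-1}(\mathrm{proj}(\tilde K_i))\cong\mathrm{proj}(\tilde K_i)\times\R$, which is the correct ambient set — but the motivational sentence, and the later aside asserting quasiconvexity of $\tilde K_i$ itself, should be corrected.
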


\begin{rem}
  Note that we do not request that $\beta$ lies in $\tilde{K}_i$, which is different from Lemma \ref{easypiece}.
\end{rem}
\begin{proof}
 Since $M_{H,i}$ is a partial fiber piece, it follows that $\partial (M_{H,i})$ consists of only finitely many cylinder components and (possibly) infinitely many plane components.

Let $\alpha$ be a geodesic in $(\tilde{M}_i, d_i)$ connecting $x$ to $y$. Then $\alpha$ satisfies (3).
Let $\alpha_{\tilde{F}_i}$ be the projection of $\alpha$ in the base surfaces $\tilde{F}_i$ of $\tilde{M}_i$. Let $\ell_0$ and $\ell_1$ be the boundary lines of $\tilde{F}_i$ that is contained in $E$ and $E'$ respectively. Let $[p_0,p_1]$ be a geodesic of shortest length from $\ell_0$ to $\ell_1$. Using Fact~1, we have that $[p_0,p_1] \subset \mathcal{N}_{A_0}(\alpha_{\tilde{F}_i})$. 

 Since we assume that $E \cap \tilde{K}_i \neq \emptyset$ and $E ' \cap \tilde{K}_i \neq \emptyset$, it follows that $\ell \cap proj(\tilde{K}_i) \neq \emptyset$ and $\ell' \cap proj(\tilde{K}_i) \neq \emptyset$. Here $proj(\tilde{K}_i)$ is the projection of $\tilde{K}_i$ in the base surface $\tilde{F}_i$ of $\tilde{M}_i$.

By Fact 2, there are points $u_0 \in \ell \cap proj(\tilde{K}_i)$ and $u_1 \in \ell' \cap proj(\tilde{K}_i)$, such that $d(u_0, p_0) \le \delta$ and $d(u_1, p_1) \le \delta$. Since $[p_0,p_1] \subset \mathcal{N}_{A_0}(\alpha_{\tilde{F}_i})$, it follows that there exist some point $a \in \alpha_{\tilde{F}_i}$ and $b \in \alpha_{\tilde{F}_i}$ such that $d_{i}(p_0, a) \le A_0$ and $d_{i}(p_1, b) \le A_0$. 
Since $\alpha_{\tilde{F}_i}$ is the projection of $\alpha$ to $\tilde{F}_i$ of $\tilde{M}_i = \tilde{F}_i \times \R$, we choose $s_0$ and $s_1$ in $\R$ such that $(a,s_0) \in \alpha$ and $(b, s_1) \in \alpha$. Let $x' = (u_0, s_0)$ and $y' =(u_1,s_1)$. We note that $x' \in E$ and $y' \in E'$ and we have
\[
d_{i}\bigl ( x', (a,s_0) \bigr ) = d_{i}\bigl ( (u_0,s_0), (a,s_0) \bigr ) = d_{i}(u_0,a) \le d_{i}(u_0, p_0) + d_{i}(p_0, a) \le \delta + A_0
\] and
\[
d_{i}\bigl (y', (b,s_1) \bigr ) = 
d_{i}\bigl ( (u_1,s_1), (b,s_1) \bigr ) = d_{i}(u_1,b) \le d_{i}(u_1, p_1) + d_{i}(p_1, b) \le \delta + A_0
\]
Thus, $x'$ and $y'$ lie in the $\delta + A_0$--neighborhood of $\alpha$ (w.r.t $d_i$--metric), and we take an $R_i$ greater than $\delta+A_0$.

We are going to construct a path $\beta$ connecting $x'$ to $y'$. Let $\gamma \colon [0,1] \to proj(\tilde{K}_i)$ be a shortest path in $proj(\tilde{K}_i) \subset \tilde{F}_i$ connecting $u_0$ to $u_1$. Let $\beta \colon [0,1] \to \tilde{M}_i$ be defined by $\beta(t) = (\gamma(t), (1-t)s_0 + ts_1)$. Then the choice of $x'$ and $y'$ implies that (1) and (2) of the lemma hold. The path $\beta$ also satisfies (4) by using a similar argument as in the proof of geometrically finite case.
\end{proof}

In the rest of this section, we are going to prove Theorem~\ref{technical}. The proof of the lower bound part is given in Section~\ref{lowerbound}, and the proof of the upper bound part is given in Section~\ref{upperbound}.

\bigskip

\subsection{Lower bound of subgroup distortion}\label{lowerbound}
In this subsection, we are going to prove $f \preceq \Delta_{H}^{G}$ where $f$ is the function defined in Theorem~\ref{technical}. We need the following lemma.
\begin{lem}
\label{lem:undistorted}
Let $S_i$ be a connected component of the almost fiber surface $\Phi(H)$. Then  $\pi_1(S_i,s_0)$ is undistorted in $H$.
\end{lem}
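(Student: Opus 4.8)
\textbf{Proof proposal for Lemma~\ref{lem:undistorted}.}

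The plan is to show that $\pi_1(S_i,s_0)$ embeds in $H$ as an undistorted subgroup by exhibiting an intermediate subgroup through which the inclusion factors, each stage of which is undistorted, and then invoking Lemma~\ref{lem:preparationdist}. Recall $S_i$ is a component of $\Phi(H)$, hence a subsurface of $M_H^c$, and it decomposes along the torus decomposition of $M$ into pieces $\Sigma_H^v$, each sitting inside a virtual-fiber or partial-fiber piece $M_{H,v}$ of $M_H^c$. The natural candidate for the intermediate subgroup is $\pi_1(N_i)$ where $N_i$ is the union of the pieces $M_{H,v}$ meeting $S_i$, glued along the edge spaces that $S_i$ crosses; equivalently $N_i$ is a ``sub-graph-of-spaces'' of $M_H^c$ carrying $S_i$. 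So the factorization is
\[
\pi_1(S_i,s_0) \hookrightarrow \pi_1(N_i) \hookrightarrow \pi_1(M_H^c) = H .
\]

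First I would establish that $\pi_1(S_i)$ is undistorted in $\pi_1(N_i)$. The key point here is that each piece $\Sigma_H^v$ is, by Definition~\ref{defn:vf, partiallyfibered}, either a virtual fiber of the corresponding Seifert or hyperbolic piece (so $\pi_1(\Sigma_H^v)$ is the fiber surface group in an $\R$-bundle $M_{H,v}$, hence a finite-index — indeed the whole — subgroup up to the $\R$-direction, and certainly undistorted), or a partial fiber realized as a compact core of a noncompact $\R$-bundle base, again undistorted in $\pi_1(M_{H,v})$ since that group is itself a surface group containing $\pi_1(\Sigma_H^v)$ as a retract. The gluing circles along cylinder edge spaces are undistorted in both adjacent pieces and in $S_i$ (they are edge groups of a graph-of-groups decomposition of $\pi_1(S_i)$ refining the torus decomposition), so a standard graph-of-groups / normal-form argument — or, in geometric terms, the fact that $S_i$ is a totally-$\pi_1$-injective subcomplex built from undistorted pieces glued along undistorted edge spaces — gives that $\pi_1(S_i)$ is undistorted in $\pi_1(N_i)$. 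One can also phrase this via Corollary~\ref{cor:GeometricDistortion}: lift to universal covers and observe that a geodesic in $\widetilde{N_i}$ between two points of $\widetilde{S_i}$ stays within bounded Hausdorff distance of $\widetilde{S_i}$, using that the tree underlying $N_i$'s splitting embeds in that of $S_i$'s.

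Next I would show $\pi_1(N_i)$ is undistorted in $H=\pi_1(M_H^c)$. This is where I expect the main obstacle. The naive hope — that an inclusion of a sub-graph-of-spaces is always undistorted — is false in general for graph manifolds (that is precisely why distortion can be exponential). However, $N_i$ is not an arbitrary subcomplex: by the minimality of $M_H^c$ and the construction of $\Phi(H)$, the pieces outside $N_i$ attach to $N_i$ only along edge spaces, and crucially $\pi_1(N_i)$ is a \emph{vertex group-like} subgroup — it is a full union of vertex and edge groups of the graph-of-groups decomposition of $H$ dual to $G_H^c$. For such ``visible'' subgroups of a graph of groups, the inclusion is undistorted: in the Bass–Serre tree $T$ for $H$ acting on the dual structure of $M_H^c$, the subgroup $\pi_1(N_i)$ stabilizes a subtree $T_{N_i}$, and its word length in $H$ is comparable to translation length along $T$ plus the word length inside the vertex groups it passes through, which is exactly its intrinsic word length. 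I would make this precise using the metric setup of Section~\ref{preparation}: with the metric $d$ on $M$ in which decomposition tori are totally geodesic and flat, a $d_{\tilde M}$-geodesic between two points of $\widetilde{N_i}$ crosses the same sequence of edge spaces as the intrinsic $\widetilde{N_i}$-geodesic (edge spaces separate, so there is no shortcut through the complementary pieces), and within each piece the two geodesics are comparable because $\tilde M_v$ is quasi-isometrically embedded in $\tilde M$ (Lemma~1.8 of \cite{Pau05} / the $\kappa$-bilipschitz estimate recorded in Preparation Step~I). Chaining the two undistortion statements through Lemma~\ref{lem:preparationdist} yields that $\pi_1(S_i,s_0)$ is undistorted in $H$, completing the proof. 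The delicate bookkeeping will be verifying that no complementary piece provides a shortcut — i.e.\ that $N_i$ is ``convex'' in $M_H^c$ in the coarse sense — which ultimately rests on the separating property of edge spaces together with the minimality of the Scott core $M_H^c$.
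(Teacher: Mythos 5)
Your proposal hits the same core geometric idea as the paper: the dual graph of $\tilde M$ is a tree, so any excursion of a path out of the subspace carrying $S_i$ must exit and re-enter through the same decomposition plane, and the geodesic in that plane between the exit and entry points lies inside the subspace by convexity. That is exactly the mechanism the paper uses, via the $\kappa$-bilipschitz comparison of the global metric $d$ with the piecewise metrics $d_i$.

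There are, however, two issues worth flagging. First, the factorization through $\pi_1(N_i)$ is vacuous and suggests a mild confusion: for a virtual fiber piece $M_{H,v}\cong \Sigma_H^v\times\R$ and for a partial fiber piece $M_{H,v}$ deformation retracts onto the compact core $\Sigma_H^v$, so in every case $\pi_1(M_{H,v})=\pi_1(\Sigma_H^v)$, not a strictly larger group containing it as a retract. Since $N_i$ glues these pieces along precisely the edge spaces where the $\Sigma_H^v$ are pasted, $\pi_1(N_i)=\pi_1(S_i)$ on the nose, and your ``first step'' disappears. Second, and more substantively, $N_i$ is \emph{noncompact} (the $\R$-direction in the virtual fiber pieces, and the noncompact base of the partial fiber pieces), so $\pi_1(N_i)$ does not act cocompactly on $\widetilde{N_i}$; Corollary~\ref{cor:GeometricDistortion} therefore does not license translating a comparison of $d_{\tilde M}$ and $d_{\widetilde{N_i}}$ geodesics into a statement about the word-metric distortion $\Delta^{H}_{\pi_1(S_i)}$. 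The paper avoids both issues by working inside the \emph{compact} Scott core $K\subset M_H$: it assembles $K_{S_i}$ from the pieces of $K$ meeting $S_i$, pasted along annuli only (so $K_{S_i}\simeq S_i$ and is compact), lifts to $\tilde K\subset \tilde M$, and compares $\tilde K_{S_i}$ with $\tilde K$, both cocompact for $\pi_1(S_i)$ and $H$. There the tree-separation and convexity argument you describe goes through in a single step. So the idea is right, but you should discard the intermediate $N_i$ and carry out the replacement-of-excursions argument directly inside the lifted Scott core, using the convexity of $P\cap\tilde K$ from Preparation Step~II.
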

\begin{rem}
When $H$ is a surface subgroup then Lemma~\ref{lem:undistorted} is obvious since every finitely generated subgroup of a surface group is quasiconvex.
\end{rem}
\begin{proof}[Proof of Lemma~\ref{lem:undistorted}]
  We take the pieces of the Scott core $K$ that intersect with $S_i$ nontrivially, and paste these pieces only along annuli but not discs. We denote the resulting submanifold of $K$ by $K_{S_i}$. Then the inclusion $S_i\subset K_{S_i}$ is a homotopy equivalence.
  
  Let $\tilde{M}$ be the universal cover of $M$, let $\tilde{K}$ be the preimage of $K$ in $\tilde{M}$, and let $\tilde{K}_{S_i}$ be one elevation of $K_{S_i}$ contained in $\tilde{K}$. We take a basepoint $s_0\in K_{S_i}$. Let $d$ be the Riemannian metric on $M$ given in Preparation Step I, and we lift this metric to $\tilde{M}$ and denote it by $d_{\tilde{M}}$. We denote the induced path metrics on $\tilde{K}$ and $\tilde{K}_{S_i}$  by $d_{\tilde{K}}$ and $d_{\tilde{K}_{S_i}}$ respectively. To see that $\pi_1(S_i, s_0)$ is undistorted in $H$, it suffices to show that $(\tilde{K}_{S_i}, d_{\tilde{K}_{S_i}})$ is undistorted in $(\tilde{K}, d_{\tilde{K}})$. 
  
 {\bf Claim}: Let $\kappa$ be the constant given by Preparation Step I. Then \\
 $d_{\tilde{K}_{S_i}}(\tilde{s}_0, h(\tilde{s}_0)) \le \kappa^2 \,d_{\tilde{K}}(\tilde{s}_0, h(\tilde{s}_0))$ for all $h \in \pi_1(S_i, s_0)$.

Let $\gamma$ be a path of shortest length in $(\tilde{K}, d_{\tilde{K}})$ connecting $\tilde{s}_0$ to $h(\tilde{s}_0)$. Let $\alpha$ be a maximum subpath of $\gamma$ not lying in $\tilde{K}_{S_i}$. Since the dual graph of $\tilde{M}$ is a tree and $K_{S_i}$ is the union of pieces of $K$ that contains pieces of $S_i$, it follows that $\alpha(0)$ and $\alpha(1)$ belong to the same decomposition plane $P$ of $\tilde{M}$. Let $\beta_{\alpha}$ be the geodesic path in the plane $(P,d_P)$ connecting $\alpha(0)$ to $\alpha(1)$. According to the Preparation Step I, it follows that $\beta_{\alpha}$ is also a geodesic on $P$ under the restriction of the metric $d_i$ on any piece $\tilde{M}_i\subset \tilde{M}$ that contains $P$. Since $d_i$ is a (3-dimensional) truncated hyperbolic metric or a (2-dimensional) hyperbolic metric cross the circle, we have
$$|\beta_{\alpha}|_{d} \le \kappa |\beta_{\alpha}|_{d_i} = \kappa d_{i}(\alpha(0), \alpha(1)) \le \kappa^{2}d(\alpha(0), \alpha(1)) \le  \kappa^2 |\alpha|_d.$$  

Moreover, since $\beta_{\alpha} (0) = \alpha(0)$ and $\beta_{\alpha}(1) = \alpha(1)$ lie in a convex set $P \cap \tilde{K}$, it follows that $\beta_{\alpha}$ lies in $P \cap \tilde{K}$. We note that $P \cap \tilde{K}$ is a subset of $\tilde{K}_{S_i}$, it follows that $\beta_{\alpha}$ lies in $\tilde{K}_{S_i}$. By replacing every maximum subpath $\alpha$ of $\gamma$ that does no lie in $\tilde{K}_{S_i}$ by the path $\beta_{\alpha}$ (as defined above), we obtain a new path in $\tilde{K}_{S_i}$ connecting $\tilde{s}_0$ to $h(\tilde{s}_0)$ whose length is no more than $\kappa^2\,|\gamma|_{d}$ with respect to the $d$--metric.
\end{proof}

\begin{rem}\label{rem:vertexandedgegroup}
The proof of Lemma \ref{lem:undistorted} also implies that any vertex subgroup and any subgroup of an edge group of $\pi_1(M)$ is undistorted.
\end{rem}

We give a proof for the lower bound part of subgroup distortion in Theorem~\ref{technical}.
\begin{proof}[Proof of Theorem~\ref{technical}, lower bound part]
Let $S_1, S_2, \cdots, S_n$ be the connected components of the almost fiber surface $\Phi(H)$. For each $i \in \{1, \cdots, n\}$, by Lemma~\ref{lem:undistorted}, $\pi_1(S_i)$ is undistorted in $H$. By applying Lemma~\ref{lem:preparationdist} to $G' = G$, $K = H$, and $K' = \pi_1(S_i)$, we have that $$\delta_{S_i} =\Delta_{\pi_1(S_i)}^{G} \preceq \Delta_{H}^{G}$$
Since $f(n):=\max\{\delta_{S_i}(n)\ |\ S_i\text{\ is\ a\ component\ of\ }\Phi(H)\}$. It follows that $f \preceq \Delta_{H}^{G}$.
\end{proof}

\bigskip

\subsection{Upper bound of subgroup distortion}\label{upperbound}
In this section, we prove the upper bound part of Theorem \ref{technical}, which is more complicated than the lower bound. The proof consists of two steps of reductions. 

{\bf First reduction:}
Now we do the first step for proving the upper bound part of Theorem \ref{technical}: roughly we throw away all $K_i\subset K$ corresponding to $S^1$-bundle or geometrically finite pieces of $M_H^c$.

Let $A$ be the complement in $K$ of the intersection of $K$ with $S^1$-bundle and geometrically finite pieces of $M_H^c$. Let $A_1, A_2,\cdots,A_l$ be the components of $A$, let $g$ be the maximum of $\{\Delta^G_{\pi_1(A_j)}\ |\ j=1,2,\cdots,l\}$, and let $\bar{g}$ be the superadditive closure of $g$. Then we have:

\begin{prop}\label{firstreduction}
  $$\Delta^G_H\preceq \bar{g}.$$
\end{prop}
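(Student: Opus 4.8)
The plan is to prove Proposition~\ref{firstreduction} by showing that the Scott core $K$ can be "reassembled" from the pieces $A_1,\dots,A_l$ together with the easy pieces (the $S^1$-bundle and geometrically finite pieces) in a way that does not distort distances in $\tilde M$ beyond a controlled amount. Concretely, recall from Corollary~\ref{cor:GeometricDistortion} that $\Delta^G_H \sim \max\{d_{\tilde K}(x,y) \mid x,y\in\tilde K,\ d_{\tilde M}(x,y)\le n\}$, and similarly each $\Delta^G_{\pi_1(A_j)}$ is computed by the analogous quantity with $\tilde K_{A_j}$ (a suitable elevation of the submanifold $K_{A_j}\subset K$ built by pasting, as in Lemma~\ref{lem:undistorted}, the pieces of $K$ meeting $A_j$ only along annuli/planes) in place of $\tilde K$. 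So it suffices to bound $d_{\tilde K}(\tilde s_0, h(\tilde s_0))$ in terms of $\sum_j d_{\tilde K_{A_j}}(\cdot,\cdot)$-type quantities along a geodesic in $\tilde M$.

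First I would fix a geodesic $\gamma$ in $(\tilde M, d_{\tilde M})$ from $x$ to $y$ with $|\gamma|\le n$, and decompose $\gamma$ according to the pieces of $\tilde M$ it passes through; its combinatorics are controlled by a path in the tree $G_{\tilde M}$, and each maximal subpath lies in a single piece $\tilde M_i$ with endpoints on boundary planes that (after enlarging the endpoints slightly and using that $K$ is a Scott core whose boundary-plane intersections are convex of bounded diameter $D$) meet $\tilde K$. For each such subpath in an $S^1$-bundle or geometrically finite piece, I would apply Lemma~\ref{easypiece} to replace it by a path $\beta\subset\tilde K_i$ of length $\le R_i\, d_i(x',y')\le R_i\kappa\, d_{\tilde M}(x,y)$ connecting points in $\tilde K\cap E$, $\tilde K\cap E'$ that lie in the $R_i$-neighborhood of $\gamma$; the convexity and bounded-diameter properties of $K\cap E$ let me connect consecutive replacement paths inside $\tilde K$ at bounded cost. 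After this surgery, the portion of the new path lying over easy pieces has total $d_{\tilde K}$-length $\preceq n$ (linear), while the portion lying over $A$ breaks into finitely many subpaths, each contained in a single $\tilde K_{A_j}$-elevation, with $d_{\tilde M}$-endpoints summing to at most $n$. Applying the definition of $\Delta^G_{\pi_1(A_j)}$ to each such subpath gives a bound of $g$ evaluated at the respective endpoint-distances, and superadditivity of $\bar g$ assembles these into $\bar g(n)$; adding the linear contribution and absorbing it into $\bar g$ yields $d_{\tilde K}(x,y)\le \bar g(n)$ up to the usual equivalence constants, i.e.\ $\Delta^G_H\preceq\bar g$.

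The main obstacle I anticipate is the bookkeeping at the interfaces: after replacing each easy-piece subpath of $\gamma$ by a path in $\tilde K$, I need the endpoints of consecutive replacement paths (and of the retained $A$-subpaths) to actually lie in $\tilde K$ and to be joinable within $\tilde K$ without introducing distortion — this requires carefully using the four convexity properties built into $K$ in Preparation Step II (torus boundaries lie entirely in $K$, cylinder and plane boundaries meet $K$ in convex sets of diameter $<D$), together with Lemma~\ref{easypiece}(1) guaranteeing the replacement endpoints are $R_i$-close to $\gamma$ so that the "error" jumps along each edge plane are uniformly bounded. A second, related subtlety is that a geodesic $\gamma$ in $\tilde M$ may enter and leave the same elevation $\tilde K_{A_j}$ many times, so the $A$-portion is genuinely a sum of many subpaths — this is exactly why the superadditive closure $\bar g$ (rather than $g$ itself) appears, and one must be slightly careful that the number of crossings, while unbounded, only contributes through the superadditive sum and not multiplicatively. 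Once the interface estimates are in place, the rest is a routine triangle-inequality accounting that I would not spell out in full detail.
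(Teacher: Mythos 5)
Your proposal is correct and follows essentially the same strategy as the paper's proof: translate to the geometric formulation via Corollary~\ref{cor:GeometricDistortion}, decompose a geodesic $\gamma$ in $\tilde M$ along edge spaces (with the dual-tree structure and the lower bound $\rho$ on plane--plane distances giving $k\preceq n$ crossings), apply Lemma~\ref{easypiece} to replace the subpaths lying over $S^1$-bundle and geometrically finite pieces by paths in $\tilde K$ with endpoints in the $R_i$-neighborhood of $\gamma$, use the convexity of $K\cap E$ from Preparation Step II to stitch consecutive replacements together inside $\tilde K$ at linearly bounded cost, and then feed the remaining subpaths homotopic into elevations of the $A_j$ through the superadditive closure $\bar g$. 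The bookkeeping subtleties you flag (interface estimates at edge planes, multiple re-entries into the same $\tilde K_{A_j}$ being the source of $\bar g$ rather than $g$) are exactly the ones the paper handles, via the linear bound $|\gamma'''|\le J(n)$ and the closing argument imported from Theorem~4.1 of \cite{Ngu18a}.
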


\begin{rem}
  The subgroups $\pi_1(S_i)$ in Theorem \ref{technical} are just proper subgroups of $\pi_1(A_j)$. To construct $\Phi(H)$, we paste virtual fiber and partial fiber surfaces in pieces of $M^c_H$ (with nontrivial $\pi_1$) only along circles. On the other hand, $A$ is a union of Scott cores of pieces of $M^c_H$ that are virtual fiber pieces, partial fiber pieces, as well as simply-connected pieces that cover Seifert pieces of $M$. (Although simply-connected pieces are partial fiber pieces by definition, we sometimes single them out since they are quite special.) Moreover, we paste these Scott cores along both annuli and discs. Here we paste two Scott cores along a disc if and  only if they lie in adjacent partial fiber pieces of $M^c_H$.
\end{rem}

\begin{proof}[Proof of Proposition \ref{firstreduction}]
If $M_{H,i}$ is an $S^1$-bundle piece or geometrically finite piece of $M^{c}_H$, let $R_i$ be the constant given by Lemmas \ref{easypiece}. If $M_{H,i}$ is a partial fiber piece of $M^{c}_H$, let $R_i$ be the constant given by Lemma~\ref{hardpiece}. Since $M^{c}_H$ has only finitely many pieces, we let $R$ be the maximum of the numbers $R_i$ chosen above. We can assume that $R >1$. Let $\kappa>1$ be the constant given by the end of Preparation Step I. Let $\rho>0$ be the lower bound of the $d$-distance for any pair of distinct edge spaces in $\tilde{M}$.

We are going to prove that
 $$\max{\{d_{\tilde{K}}(x,y)\ |\ x,y\in \tilde{K} \text{\ and\ } d_{\tilde{M}}(x,y)\leq n\}} \preceq \bar{g}(n)$$
 
 We briefly describe here the idea of the proof. For any $x,y\in \tilde{K}$ with $d_{\tilde{M}}(x,y)\leq n$, we are going to construct a path $\gamma'''$ in $\tilde{M}$ from $x$ to $y$, such that $\abs{\gamma'''}$ is bounded above by a linear function of $n$ (depends only on $\rho, R, \kappa$). Moreover, $\gamma'''$ can be written as a concatenation of finitely many subpaths, such that each subpath either lies in $\tilde{K}$ or homotopic to a path in the preimage of $\cup_{j=1}^l A_j$ (relative to its boundary). We then use the same argument as in the proof of Theorem~4.1 in \cite{Ngu18a}  to conclude that $$\max{\{d_{\tilde{K}}(x,y)\ |\ x,y\in \tilde{K} \text{\ and\ } d_{\tilde{M}}(x,y)\leq n\}} \preceq \bar{g}(n).$$

  Without loss of generality, since $K$ is compact, we assume both $x$ and $y$ lie in edge spaces of $\tilde{M}$. Let $\gamma'$ be a shortest geodesic in $(\tilde{M}, d$) connecting $x$ and $y$, then $|\gamma'| \le n$.
   As in \cite{HN19} and \cite{Ngu18a} (see the paragraph above Claim~1 in the proof of Theorem~6.1 in \cite{HN19} and the eighth paragraph in the proof of Theorem~4.1 in \cite{Ngu18a} respectively), we can replace $\gamma'$ by $\gamma$, such that $\gamma$ is transverse to edge spaces of $\tilde{M}$, it intersects with each edge space of $\tilde{M}$ at most once, and $|\gamma| \le \kappa^2 |\gamma'| \le \kappa^2 n$.

  Let $y_0=x, y_k=y$, and let $y_1,y_2,\cdots,y_{k-1}$ be the other intersection points between $\gamma$ and edge spaces of $\tilde{M}$, and we denote these edge spaces by $E_1,E_2,\cdots E_{k-1}$, with $y_i\in E_i$. Let $\gamma_i$ be the subpath of $\gamma$ from $y_{i-1}$ to $y_i$, and let $\tilde{M_i}$ be the piece of $\tilde{M}$ that contains $\gamma_i$. By the choice of $\rho$, we have $\rho k \le |\gamma| \le \kappa^2 n$, and $k \le \frac{\kappa^2}{\rho}n$.

  For each $\gamma_i$, if it lies in a piece of $\tilde{M}$ that covers a finite cover piece of $M_H^c$, then it is completely contained in $\tilde{K}$. If $\gamma_i$ lies in a piece $\tilde{M}_i\subset \tilde{M}$ that covers a $S^1$-bundle or a geometrically finite piece of $M_H^c$, we modify $\gamma_i$ as the following.

  We apply Lemma \ref{easypiece} to the two endpoints of $\gamma_i$ ($y_{i-1}$ and $y_i$) and get the following two paths (we refer the reader to Figure~\ref{fig1} for a schematic picture): a path $\alpha_i$ connecting $y_{i-1}$ and $y_i$, and a path $\beta_i$ in $\tilde{K}_i$ connecting points $z_{i-1}' \in E_{i-1}$ and $z_i\in E_i$, such that the following holds:
  \begin{enumerate}
    \item 
    \label{item 1}$d_i(z_{i-1}',\alpha_i),d_i(z_i,\alpha_i) \le R$,
    \item
    \label{item 2}$|\alpha_i|_{d_i} = d_{i}(\gamma_{i}(0), \gamma_{i}(1))$,
    \item 
    \label{item 3}
    $|\beta_i|_{d_i} \le Rd_i(z_{i-1}',z_i)$.
  \end{enumerate}

Then we get a concatenation of paths $\xi_{i-1}\beta_i\xi_i'$ that is homotopic to $\gamma_i$, where $\xi_{i-1}$ is the geodesic path in $E_{i-1}$ from $y_{i-1}$ to $z_{i-1}'$, and $\xi_i'$ is the geodesic path in $E_i$ from $z_i$ to $y_i$. By the choice of $d_i$-metric, for any two points in the same boundary component of $M_i$, the shortest path in $M_i$ (under the $d_i$-metric) between these two points still lie in this boundary component.

Let $\{\gamma_{i_0}, \gamma_{i_1}, \cdots, \gamma_{i_s}\}$ be the collection of all subpaths $\gamma_i$ of $\gamma$ where each $\gamma_i$ lies in a piece that covers a $S^1$--bundle or a geometrically finite piece of $M_{H}^{c}$. Let $\gamma''$ be the path obtained by replacing each $\gamma_{i_j}$ (with $j =1,\cdots, s$) by $\xi_{i_j-1}\beta_{i_j}\xi_{i_j}'$.

\begin{center}
\includegraphics[width=2.5in]{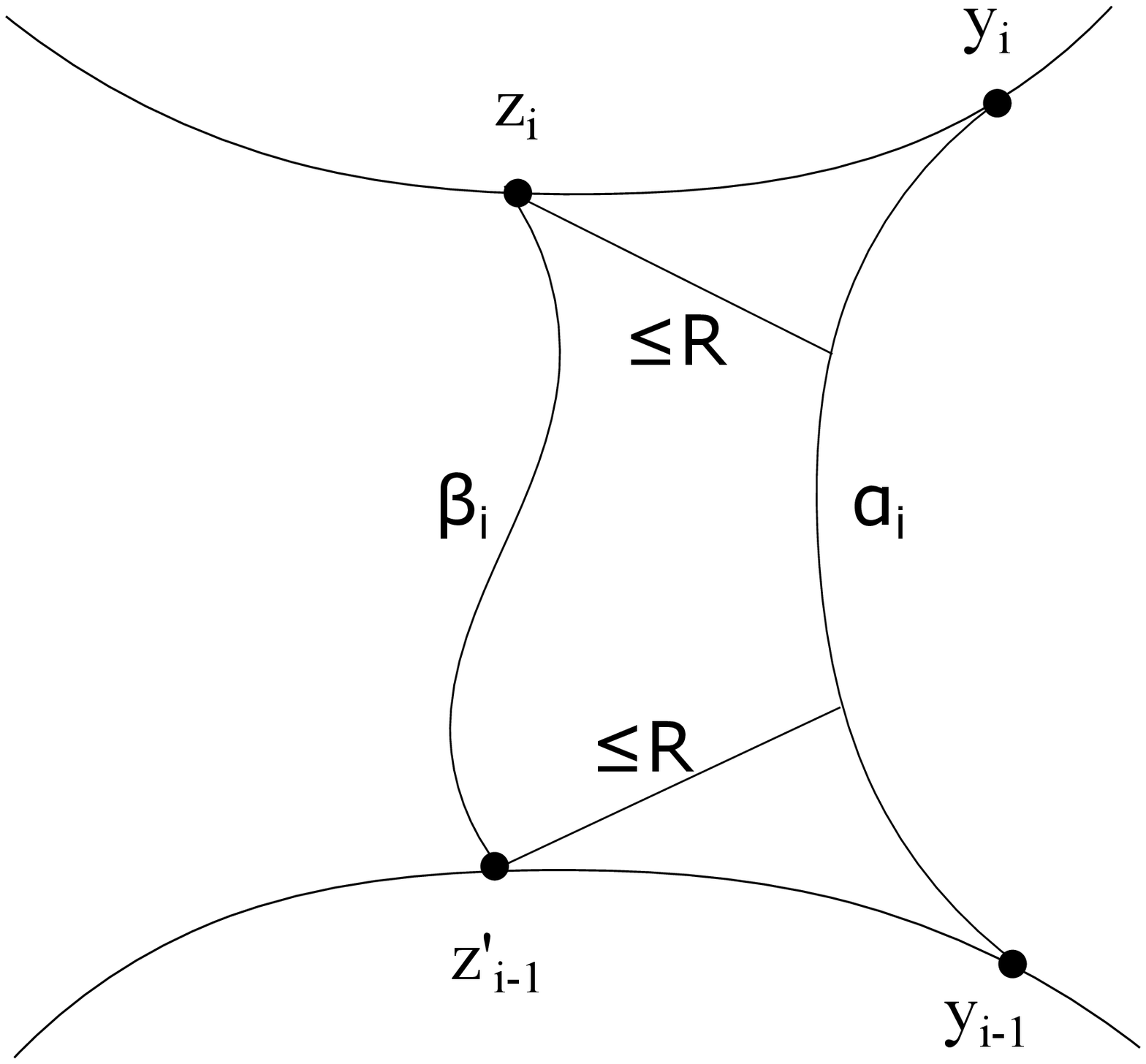}
\begin{figure}[!ht]
\caption{The path $\alpha_{i}$ connects the initial point of $\gamma_i$ on the plane $E_{i-1}$ to the terminal point of $\gamma_i$ on the plane $E_i$. The endpoints $z'_{i-1}$ and $z_i$ of the path $\beta_i \subset \tilde{K}$ are within $R$--neighborhood of $\alpha_i$}
\label{fig1}
\end{figure}
\end{center}


{\bf{Claim:}}
There exists a linear function $J$ depending only on $\kappa, R$ and $\rho$ such that $\abs{\gamma''}_{d} \le J(n)$.

Indeed, let $j$ be an element in $\{i_0, i_1, \cdots, i_s\}$. Using (\ref{item 1}), (\ref{item 2}), and (\ref{item 3}) we have that $d_{j}(y_{j-1}, z_{j-1}') \le R + \abs{\alpha_j}_{d_j}$, $d_{j}(z_j, y_j) \le R + \abs{\alpha_j}_{d_j}$, and $d_{j}({z_{j-1}'}, z_j) \le 2R + \abs{\alpha_j}_{d_j}$. We recall that the relation between the metrics $d$ and $d_j$ is discussed in Preparation Step I. We have
\begin{align*}
    \bigl |\xi_{j-1}\beta_j \xi_j' \bigr | &\leq \kappa \bigl (|\xi_{j-1}|_{d_j}+|\beta_j|_{d_j}+|\xi_j'|_{d_j} \bigr ) \\
    &\leq \kappa \bigl (d_j(y_{j-1},z_{j-1}')+d_j(z_j,y_j)+Rd_j(z_{j-1}',z_j) \bigr )\\
    &\le \kappa \bigl (  R + \abs{\alpha_j}_{d_j} + R + \abs{\alpha_j}_{d_j} + R(2R + \abs{\alpha_j}_{d_j}) \bigr )\\
    &= \kappa \bigl ( 2R + 2 \abs{\alpha_j}_{d_j} + R(2R + \abs{\alpha_j}_{d_j}) \bigr )\\
    &\le \kappa \bigl ( 2R^{2} + 2R\abs{\alpha_j}_{d_j} + R(2R + \abs{\alpha_j}_{d_j}) \bigr ) \,\,\,\textup{using $R \ge 1$}\\
    &= \kappa R (4R + 3 \abs{\alpha_j}_{d_j} ) \le \kappa R (4R + 3R \abs{\gamma_j}_{d_j} ) = \kappa R^2  (4 + 3 \abs{\gamma_j}_{d_j})\\
    &\le \kappa R^{2} (4 + 3\kappa \abs{\gamma_j}) < \kappa ^{2} R^{2} ( 4 + 3 \abs{\gamma_j})
\end{align*}
Summing over $j$, we have
\begin{align*}
\sum_{j=i_0}^{i_s}   \bigl |\xi_{j-1}\beta_j \xi_j' \bigr | &\le \sum_{j=i_0}^{i_s} \kappa^{2} R^2 \bigl (4 + 3 \abs{\gamma_j} \bigr ) =  \sum_{j=i_0}^{i_s} 4 \kappa^2 R^2 +  \sum_{j=i_0}^{i_s} 3\kappa^2 R^2 \abs{\gamma_j}\\
&\le 4 \kappa^2 R^2 k +  3 \kappa^2 R^2 \abs{\gamma} \le  4 \kappa^2 R^2 k +  3 \kappa^4 R^2 \abs{\gamma'}  \le 4 \kappa^4 R^2 n/\rho +  3 \kappa^4 R^2n
\end{align*}
Since the sum of lengths of the subpaths in the complement of $\xi_{j-1} \beta _{j} \xi_j'$ (in $\gamma''$) is no more than $\abs \gamma$ that is less than $\kappa^2 n$, it follows that 
\[
\abs{\gamma''} \le \kappa^2 n + 4\kappa^{4} R^2 n/\rho + 3\kappa^4 R^2 n = (\kappa^2 + 4\kappa^4 R^2/\rho + 3\kappa^4 R^2)n
\]
Let $J(x) = (\kappa^2 + 4\kappa^{4} R^2/\rho + 3\kappa^4 R^2)x$, the claim is confirmed.


 
 We recall that each $\beta_{i_j}$ (with $j \in \{1, \cdots, s\}$) is contained in $\tilde{M}_{i_j}$, and $\beta_{i_j} \subset \tilde{K}$ also holds. Whenever $\tilde{M}_{i_j}$ and $\tilde{M}_{i_t}$ are adjacent with $i_t = i_j +1$, and we did the above path modification for both $\gamma_{i_j}$ and $\gamma_{i_t}$, we replace the subpath $\xi_{i_j}' \cdot \xi_{i_j}$ of $\gamma''$ by the geodesic $\zeta _{i_j} = [z_{i_j}, z_{i_j}']$ in the plane $E_{i_j}$ connecting $z_{i_j}$ to $z_{i_j}'$. We note that $z_{i_j}$ and $z_{i_j}'$ lie in $\tilde{K}$ since they are endpoints of $\beta_{i_j} \subset \tilde{K}$ and $\beta_{i_t} \subset \tilde{K}$ respectively. By convexity of $\tilde{K} \cap E_{i_j}$, the geodesic $\zeta_{i_j}$ in $E_{i_j}$ must lie in $\tilde{K}$. Moreover, by the triangle inequality we have 
 \[|\zeta_{i_j}| = d(z_{i_j}, z_{i_j}') \le d(z_{i_j}, y_{i_j}) + d(y_{i_j}, z_{i_j}') =\abs{\xi_{i_j}'} + \abs{\xi_{i_j}},
 \]
 so the length of the new path $\gamma'''$ is no more than the length of $\gamma''$ that is bounded above by the linear function $J(n)$.
 
 Thus, we obtain a new path $\gamma'''$ who can be written as a concatenation of finitely many subpaths, and each subpath lies in one of the following to cases:
 \begin{itemize}
     \item It is either a subpath that lies in $\tilde{K}$, including: $\beta_{i_j}$'s, $\zeta_{i_j}$'s and original $\gamma_j$'s that lie in finite cover pieces.
     \item or it is homotopic to a path lying in one elevation of some $A_j$ relative to boundary, and such a subpath can be written as $\xi_{j-1}'\cdot \gamma_j\cdot \xi_j$. Here $\gamma_j$ is a component of the complement of the subpaths $\{\gamma_{i_0},\cdots,\gamma_{i_s}\}$ (that lie in geometrically finite or $S^1$-bundle pieces), while $\xi_{j-1}'$ and $\xi_j$ are obtained by applying the above construction to $\gamma_{j-1}$ and $\gamma_j$. 
 \end{itemize} In the second possibility, by the construction of $\xi_j$'s, the initial and terminal points of $\xi_{j-1}'\cdot \gamma_j\cdot \xi_j$ lie in $\tilde{K}$, and they actually lie in the same component $A_j$ of the elevation of $A$. So it is homotopy to a path $\gamma_j'$ in $A_j$, with the end points fixed. 
 
 
Let $\Delta_j$ be the distortion of the chosen elevation of $A_j$ in $\tilde{M}$, then $\Delta_{j} \sim \Delta^{\pi_1(M)}_{\pi_1(A_j)}$ (in the sense of Definition~\ref{def:equivalentfunction}). Let $\gamma'_{j}$ be the shortest path in the elevation of $A_j$ connecting the two endpoints of $\xi_{j-1}'\cdot \gamma_j\cdot \xi_j$. It follows from the definition of $\Delta_j$ that $\Delta_{j}(|\xi_{j-1}'\cdot \gamma_j\cdot \xi_j|) \ge |\gamma'_j|$. By the same argument as in the last paragraph of the proof of Theorem~4.1 in \cite{Ngu18a}, the proof is done.
\end{proof}

{\bf Second reduction:} The subset $A$ obtained in the step I is a union of Scott cores of virtual fiber pieces and partial fiber pieces (including those pieces with trivial or infinitely cyclic fundamental groups) of $M^c_H$, pasting along annuli and discs. Moreover, $A$ is homeomorphic to a surface cross the interval, and this surface is obtained by pasting its vertex pieces along circles and arcs. The almost fiber surface $\Phi(H)$ is naturally a subsurface of the above surface, obtained by pasting those pieces with non-cyclic fundamental groups along circles. In this step, we prove that the subgroup distortion of components of $A$ are determined by the corresponding components of $\Phi(H)$.

Now we enlarge $\Phi(H)$ to get a new surface $\Phi(H)'$ by the following way: for each partial fiber piece of $M^c_H$ with infinite cyclic fundamental group, add an annulus to $\Phi(H)$ and paste it to the original $\Phi(H)$ along a circle if possible; for each partial fiber piece of $M^c_H$ with trivial fundamental group, add a disc to $\Phi(H)$ as a new component of $\Phi(H)$. The advantage of this new surface $\Phi(H)'$ is that it intersects with each piece of $A$ nontrivially. 

By construction, each component of $\Phi(H)'$ either deformation retracts to a component of $\Phi(H)$, or is a disc, or is an annulus. In the third case, the annulus subgroup is horizontal in a Seifert piece of $\pi_1(M)$, so it is undistorted in the vertex subgroup. By Remark \ref{rem:vertexandedgegroup}, any vertex subgroup is undistorted in $\pi_1(M)$, so the annulus subgroup
is undistorted in $\pi_1(M)$. In conclusion, the $\bar{f}$ defined for $\Phi(H)'$ is equivalent to the $\bar{f}$ defined for $\Phi(H)$. So we only need to compare the distortions of $\pi_1(A)$ and $\pi_1(\Phi(H)')$, and we will abuse notation to still denote $\Phi(H)'$ by $\Phi(H)$.

For simplicity, we assume that $A$ has only one component. Let $S_1,\cdots,S_m$ be the components of $\Phi(H)$ with $\Phi(H)=\cup_{i=1}^m S_i$. For each $i$, let $B_i$ be the union of pieces of $A$ that intersects with $S_i$ nontrivially, then each $B_i$ is homeomorphic to $S_i\times I$, and $A$ can be obtained by pasting $\{B_i\}_{i=1}^m$ along discs. Since the inclusion $S_i\subset B_i$ is a homotopy equivalence, we only need to bound the distortion of $\pi_1(A)$ by the distortions of $\{\pi_1(B_i)\}_{i=1}^m$.

Let $h$ be the maximum of subgroup distortions $\{\Delta^G_{\pi_1(B_i)}\}_{i=1}^m$, and let $\bar{h}$ be its superadditive closure, then we prove the following result.

\begin{prop}\label{secondreduction}
$$\Delta^G_{\pi_1(A)}\preceq \bar{h}.$$
\end{prop}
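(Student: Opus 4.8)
The plan is to run the argument of Proposition~\ref{firstreduction} (equivalently, of the last paragraph of the proof of Theorem~4.1 in \cite{Ngu18a}) once more, with the roles interchanged: now the discs along which the $B_i$ are pasted play the part of the ``easy'' edge spaces, the pieces $B_i$ play the part of the heavy building blocks, and $\tilde A$ plays the part of $\tilde K$. By Corollary~\ref{cor:GeometricDistortion} (applied to the $\pi_1$--injective inclusion $A\hookrightarrow M$) it suffices to find a constant $C$ so that for all $x,y\in\tilde A$ with $d_{\tilde M}(x,y)\le n$ one has $d_{\tilde A}(x,y)\le\bar h(Cn)$, where $\tilde A$ carries the induced path metric and $\tilde M$ the lifted metric. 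I would first recall the structure of $\tilde A$: since $A$ is obtained by pasting the $B_i$ along discs, and since (by the construction of the Scott core $K$) each such disc is a convex subset of a decomposition plane of $\tilde M$ and is the entire intersection of $\tilde A$ with that plane, $\tilde A$ is a tree of copies $\tilde B^{(v)}$ of the $\tilde B_i$, two of which meet, if at all, along a single such disc, which then separates $\tilde A$ into the two sub-unions of copies on either side. (In particular $\pi_1(A)$ is the free product of the $\pi_1(B_i)$ with a possibly trivial free group, and collapsing the copies realizes the Bass--Serre tree.)

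Fix $x,y\in\tilde A$ with $d_{\tilde M}(x,y)\le n$ and take a geodesic $\sigma$ in $\tilde A$ from $x$ to $y$. Let $\tilde B^{(0)},\dots,\tilde B^{(p)}$ be the copies it meets in order; consecutive ones $\tilde B^{(k-1)},\tilde B^{(k)}$ share a pasting disc $D_k$ lying in a decomposition plane $P_k$ of $\tilde M$, and $\sigma$ crosses $D_k$ at a single point $z_k$. Set $z_0:=x$, $z_{p+1}:=y$ and write $\sigma=\sigma_0\cdots\sigma_p$, where $\sigma_k$ is a geodesic of $\tilde B^{(k)}$ from $z_k$ to $z_{k+1}$ (the relevant discs are convex, so a geodesic between two points of a copy does not leave that copy). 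Thus
\[
d_{\tilde A}(x,y)=\sum_{k=0}^{p} d_{\tilde B^{(k)}}(z_k,z_{k+1}).
\]
The heart of the proof is the estimate
\[
p\le Cn+C \qquad\text{and}\qquad \sum_{k=0}^{p} d_{\tilde M}(z_k,z_{k+1})\le Cn+C .
\]
To establish it I would use that $P_1,\dots,P_p$ are pairwise disjoint separating planes of $\tilde M$ whose half-spaces are linearly nested --- a consequence of the tree arrangement of the copies together with the fact that $\tilde B^{(k)}\cap P_k=D_k$ --- so that any geodesic of $\tilde M$ from $x$ to $y$ must cross $P_1,\dots,P_p$ in this order; the uniform positive lower bound on the distance between distinct decomposition planes (see the proof of Proposition~\ref{firstreduction}) then forces $p$ to be linear in $n$, and, since each $D_k$ is a lift of one of the finitely many compact pasting discs of $A$ and hence has uniformly bounded diameter, the same bookkeeping as in \cite{Ngu18a} bounds $\sum_k d_{\tilde M}(z_k,z_{k+1})$ by a linear function of $n$. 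This is the one place where the chosen geometry of $M$ is genuinely used --- converting the combinatorial ``separation and order'' data of the pasting discs into a metric telescoping bound against $n$ --- and it is the step I expect to be the main obstacle.

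Granting the estimate, I would conclude as follows. For each $k$, writing $\tilde B^{(k)}$ as a copy of $\tilde B_{i}$, Corollary~\ref{cor:GeometricDistortion} applied to $B_{i}\hookrightarrow M$ (with an extra additive constant to pass from orbit points to arbitrary points of the compact manifold $B_i$) gives
\[
d_{\tilde B^{(k)}}(z_k,z_{k+1})\le \Delta^G_{\pi_1(B_i)}\bigl(d_{\tilde M}(z_k,z_{k+1})\bigr)+C'\le h\bigl(d_{\tilde M}(z_k,z_{k+1})\bigr)+C'.
\]
Summing over $k$ and using superadditivity of $\bar h$ together with the estimate above,
\[
d_{\tilde A}(x,y)\le \bar h\!\left(\sum_{k=0}^{p} d_{\tilde M}(z_k,z_{k+1})\right)+C'(p+1)\le \bar h(Cn+C)+C''n .
\]
Every subgroup distortion dominates the identity, so $h\ge\mathrm{id}$, hence $\bar h\ge\mathrm{id}$, and the right-hand side is at most $\bar h(C'''n)$ for a suitable constant; by Corollary~\ref{cor:GeometricDistortion} this yields $\Delta^G_{\pi_1(A)}\preceq\bar h$, as desired.
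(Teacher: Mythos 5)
Your proposal inverts the direction of the argument from what the paper does, and this inversion creates a genuine gap exactly at the place you flag as "the main obstacle."

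You begin with a geodesic $\sigma$ in $\tilde A$, let $z_k$ be its crossing point with the pasting disc $D_k\subset P_k$, and then assert that $\sum_k d_{\tilde M}(z_k,z_{k+1})$ is bounded by a linear function of $n=d_{\tilde M}(x,y)$, citing the separation of the planes $P_k$, the bound $p\preceq n$, and the bounded diameter of each $D_k$, together with "the same bookkeeping as in \cite{Ngu18a}." None of these facts yield the desired estimate. If $\gamma$ is the $\tilde M$-geodesic from $x$ to $y$, it does cross $P_1,\dots,P_p$ at points $w_1,\dots,w_p$ with $\sum_k d_{\tilde M}(w_k,w_{k+1})\le n$, but $w_k$ is an arbitrary point of the (unbounded) plane $P_k$ and need not lie anywhere near the bounded disc $D_k=\tilde A\cap P_k$. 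So $d_{\tilde M}(z_k,w_k)$ can be arbitrarily large, and the bounded diameter of $D_k$ gives you nothing: it constrains $z_k$, not $w_k$. Consequently the telescoping bound $\sum_k d_{\tilde M}(z_k,z_{k+1})\le 2\sum_k d_{\tilde M}(z_k,w_k)+n$ is not linear in $n$. This is not a minor detail --- it is the entire content of the proposition.

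The paper's proof attacks exactly this problem, and its two devices are absent from yours. First, it works forward from the $\tilde M$-geodesic $\gamma$ (not from the $\tilde A$-geodesic) and modifies it inside each piece using Lemma~\ref{hardpiece}; the crucial payoff is Lemma~\ref{hardpiece}~(1), which forces the new endpoints on each edge plane to lie on fibers of the left and right lifted Seifert fibrations that actually meet $\tilde A$. Second, and decisively, it uses the uniform lower bound $c>0$ on the angle between those two fibrations on every decomposition plane (Step~II of the paper's proof), so that by elementary Euclidean trigonometry the new crossing segment $\zeta_i\subset E_i$ sits within $\csc(c)(|\zeta_i|+D)$ of $\tilde A\cap E_i$. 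Summing this over the $O(n)$ crossings gives a path $\gamma'''$ of length $O(n)$ whose subpaths connect points of $\tilde A$. Your proposal replaces all of this by "the same bookkeeping," which is where the proof fails. To salvage your approach you would still need some version of the fiber-transversality/angle argument to relate $z_k$ to $w_k$; at that point you would essentially be reproducing the paper's Step~II, and it would be cleaner to start from $\gamma$ as the paper does.

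One smaller point: Corollary~\ref{cor:GeometricDistortion} as stated compares $d_{\tilde Y}$ and $d_{\tilde X}$ along an orbit, and your application to pairs $(z_k,z_{k+1})$ that are not in an orbit requires a (true, but worth saying) cocompactness argument to pass to arbitrary points of the compact $B_i$; you gesture at this with "an extra additive constant," which is fine, but the real content of the proposition lies elsewhere, as above.
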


\begin{proof}
Let $D >0$ be the constant given by the end of Preparation Step II. Let $R>1$, $\kappa>1$ and $\rho>0$ be the constants given by the first paragraph of the proof of Theorem~\ref{firstreduction}.

  Let $\tilde{A}$ be one elevation of $A$ in $\tilde{M}$, then we need to prove that $$\max{\{d_{\tilde{A}}(x,y)\ |\ x,y\in \tilde{A} \text{\ and\ } d_{\tilde{M}}(x,y)\leq n\}} \preceq \bar{h}(n).$$

  We briefly describe here the idea of the proof. For any $x,y\in \tilde{A}$ with $d_{\tilde{M}}(x,y)\leq n$, we will construct a path $\gamma'''$ in $\tilde{M}$ connecting $x$ to $y$ such that the following holds:  the path $\gamma'''$ can be written as a concatenation of subpaths, such that each subpath connects two points in an elevation of $B_i$ for some $i$. Moreover, there exists  a linear function $F(n)$ (only depends on $D$, $R$, $\kappa$ and $\rho$) such that $\abs{\gamma'''}$ is bounded above by $F(n)$.

  Since $A$ is compact, we can assume both $x$ and $y$ lie in edge spaces of $\tilde{M}$. Let $\gamma'$ be the shortest path in $(\tilde{M},d)$ connecting $x$ and $y$, then $|\gamma'| \le n$. As in the proof of Proposition \ref{firstreduction}, we can replace $\gamma'$ by $\gamma$ such that it is transverse to edge spaces of $\tilde{M}$, it intersects with each edge space of $\tilde{M}$ only once, and $|\gamma| \le \kappa^2 |\gamma'| \le \kappa^2 n$.

  Instead of taking the intersection of $\gamma$ with all edge spaces in $\tilde{M}$, we take the intersection of $\gamma$ with all edge spaces in $\tilde{M}$ that are mapped to plane edge spaces of $M_H$. By adding $x$ and $y$ to these points, we get $y_0=x,y_1,\cdots,y_{k-1},y_k=y$ such that $y_i$ lies in an edge space $E_i\subset \tilde{M}$. As in the proof of Proposition \ref{firstreduction}, $k\leq \frac{\kappa^2}{\rho}n$ holds. Let $\gamma_i$ be the subpath of $\gamma$ from $y_{i-1}$ to $y_i$, then $\gamma$ is a concatenation of $\gamma_i$: $\gamma=\gamma_1 \cdots \gamma_k$.

  These $\gamma_i$'s are important in this proof, since each $\gamma_i$ is homotopic to a path in an elevation of some $B_j$, such that the homotopy process keep the two endpoints of $\gamma_i$ lying in $E_{i-1}$ and $E_i$ respectively. Moreover, each $\gamma_i$ is a concatenation of a few (possibly one) subpaths such that each such subpath lies in a piece of $\tilde{M}$ properly. We call the first subpath the initial path of $\gamma_i$ and call the last one the terminal path of $\gamma_i$ (they might be same with each other).

  For any $i\in \{1,\cdots,k-1\}$, since $A$ is constructed by pasting Scott cores of virtual fiber and partial fiber pieces of $M_H$ together (including simply connected pieces), for the two pieces of $\tilde{M}$ adjacent to $E_i$, both of them cover partial fiber pieces in $M_H$ and Seifert pieces in $M$. So there are two lifted fibering structures on the plane $E_i$, one is from the piece of $\tilde{M}$ that contains the terminal path of $\gamma_{i}$ (called left fibration), and the other one is from the piece of $\tilde{M}$ that contains the initial path of $\gamma_{i+1}$ (called right fibration). These two fibering structures on $E_i$ both consist of geodesic lines as leaves and they are distinct from each other.

  In the following, we modify the $\gamma_i$ by two steps, such that its initial and terminal points lie in $\tilde{A}$. We recall that it is possible that two planes $E_{i-1}$ and $E_{i}$ may not lie in the same piece of $\tilde{M}$.


  {\bf Step I:}
  The first step is similar to the modification process in the proof of Proposition \ref{firstreduction}. Instead of applying Lemma \ref{easypiece} to (some) $\gamma_i$ as in Proposition \ref{firstreduction}, we apply Lemma \ref{hardpiece} to the initial and terminal paths of $\gamma_i$, except the initial path of $\gamma_1$ and the terminal path of $\gamma_k$. 
  
  More precisely, for each $i\in \{1, \cdots, k\}$, let $\gamma_i^{\text{ini}}$ and $\gamma_i^{\text{ter}}$ be the initial and terminal paths of $\gamma_i$ respectively (it is possible that $\gamma_i^{\text{ini}}=\gamma_i^{\text{ter}}$). We apply Lemma~\ref{hardpiece} to $\gamma_i^{\text{ini}}$ and $\gamma_i^{\text{ter}}$ to get $\beta_i^{\text{ini}}$ and $\beta_i^{\text{ter}}$ respectively (except $\gamma_1^{\text{ini}}$ and $\gamma_k^{\text{ter}}$). Note that $\gamma_i^{\text{ini}}$ and $\beta_i^{\text{ini}}$ have different end points, and the same for $\gamma_i^{\text{ter}}$ and $\beta_i^{\text{ter}}$. By concatenating with geodesic paths in edge spaces of $\tilde{M}$, we get $\delta_i^{\text{ini}}\beta_i^{\text{ini}}\eta_i^{\text{ini}}$ in the same piece of $\tilde{M}$ as $\gamma_i^{\text{ini}}$ and they have the same endpoints. Similarly, we get $\delta_i^{\text{ter}}\beta_i^{\text{ter}}\eta_i^{\text{ter}}$ for $\gamma_i^{\text{ter}}$. Moreover, we delete the initial and terminal paths of $\gamma_i$ to get $\hat{\gamma}_i$ (which might be empty). 
  
  
  In this way, we replace the old path $\gamma=\gamma_1\cdots\gamma_k$ by a new path
  \[
  (\gamma_1^{\text{ini}})\hat{\gamma_1}(\delta_1^{\text{ter}}\beta_1^{\text{ter}}\eta_1^{\text{ter}})(\delta_2^{\text{ini}}\beta_2^{\text{ini}}\eta_2^{\text{ini}})\hat{\gamma_2}(\delta_2^{\text{ter}}\beta_2^{\text{ter}}\eta_2^{\text{ter}})\cdots(\delta_k^{\text{ini}}\beta_k^{\text{ini}}\eta_k^{\text{ini}})\hat{\gamma_k}(\gamma_k^{\text{ter}})
  \]
  Note that it is possible that $\hat{\gamma}_i$ is the empty path. In this case, it is possible that $\delta_i^{\text{ini}}\beta_i^{\text{ini}}\eta_i^{\text{ini}}$ and $\delta_i^{\text{ter}}\beta_i^{\text{ter}}\eta_i^{\text{ter}}$ are the same path, and we only have one of them in the above concatenation.
  
  Since both $\eta_{i-1}^{\text{ter}}$ and $\delta_i^{\text{ini}}$ lie in the same edge space $E_i\subset \tilde{M}$ (that contains $y_i$), we can replace the concatenation $\eta_{i-1}^{\text{ter}} \delta_i^{\text{ini}}$ by the geodesic $\zeta_i$ in $E_i$ with the same endpoints: from $z_i$ to $z_i'$. Now we denote the concatenation $\beta_i^{\text{ini}}\eta_i^{\text{ini}}\hat{\gamma_i}\delta_i^{\text{ter}}\beta_i^{\text{ter}}$ by $\xi_i$ (its definition is slightly different for $\xi_1$ and $\xi_k$), and get our new path $\gamma''$:
  
  \[\gamma''=\xi_1\zeta_1\xi_2\zeta_2\cdots\xi_{k-1}\zeta_{k}\xi_k\]
  
  The new path $\gamma''$ has the following properties: 
  \begin{enumerate}
    \item $\gamma$ and $\gamma''$ have the same initial and terminal points in $\tilde{A}$.
   \item $\abs{\gamma''} \le J(n)$ where $J(n)=(\kappa^2 + 4\kappa^{4} R^2/\rho + 3\kappa^4 R^2)n$ is the linear function given by the proof of Proposition \ref{firstreduction}. (Apply Lemma \ref{hardpiece} instead of Lemma \ref{easypiece}.)
    \item Each $\zeta_i$ is contained in the edge space $E_i\subset \tilde{M}$ that contains $y_i$, and it is a geodesic in $E_i$.
    \item 
    \label{item:intersection}Let $\overleftarrow{\ell_i}$ be the leaf of the left fibration in $E_i$ going through $z_i$, and let $\overrightarrow{\ell_i}$ be the leaf of the right fibration in $E_i$ going through $z_i'$, then both $\overleftarrow{\ell_i}$ and $\overrightarrow{\ell_i}$ intersect with $\tilde{A}$.
  \end{enumerate}
Here condition (\ref{item:intersection}) uses Lemma \ref{hardpiece} condition (1), which is important in the following Step II.

 We note that if each $\zeta_i$ intersects with $\tilde{A}$ nontrivially, then our proof is done. We can rewrite $\gamma''$ as another concatenation of paths, by splitting $\zeta_i$ into two subpaths and fusing them with $\xi_i$ and $\xi_{i+1}$. Then for each subpath in this new concatenation, it connects two points in the same elevation of $B_i$ for some $i$. Then we can take $\gamma'''$ to be this new concatenation.

 {\bf Step II:}
  In general, $\zeta_i$ may not intersect with $\tilde{A}$ as we wish, and we will use Euclidean geometry to show that its distance from $\tilde{A}$ in $E_i$ is bounded.

   For each decomposition torus $T\subset M$ that is adjacent to two Seifert pieces, it has two distinct induced fibrations consist of geodesic leaves, and let $c_T$ be the angle between these two fibrations, with respect to the $d$-metric. Since $M$ has finitely many decomposition tori, there exists $c>0$ such that $c<c_T<\pi-c$ for all $T$.

  Let $d_{E_i}$ be the restriction of $d$ on $E_i$, then we prove that: $$d_{E_i}(\zeta_i,\tilde{A}\cap E_i)\le \csc{c}\cdot(|\zeta_i|+D).$$

  Let $o_i$ be the intersection point of $\overleftarrow{\ell_i}$ and $\overrightarrow{\ell_i}$ in $E_i$ (it exists since $\overleftarrow{\ell_i}$ and $\overrightarrow{\ell_i}$ are not parallel), let $x_i$ be one intersection point of $\overleftarrow{\ell_i}$ and $\tilde{A}\cap E_i$, and let $x_i'$ be one intersection point of $\overrightarrow{\ell_i}$ and $\tilde{A}\cap E_i$. Since the diameter of $\tilde{A}\cap E_i$ in $E_i$ is bounded by $D$, $d_{E_i}(x_i,x_i')<D$ holds. This Euclidean picture is shown in Figure 2.

\begin{center}
\includegraphics[width=2in]{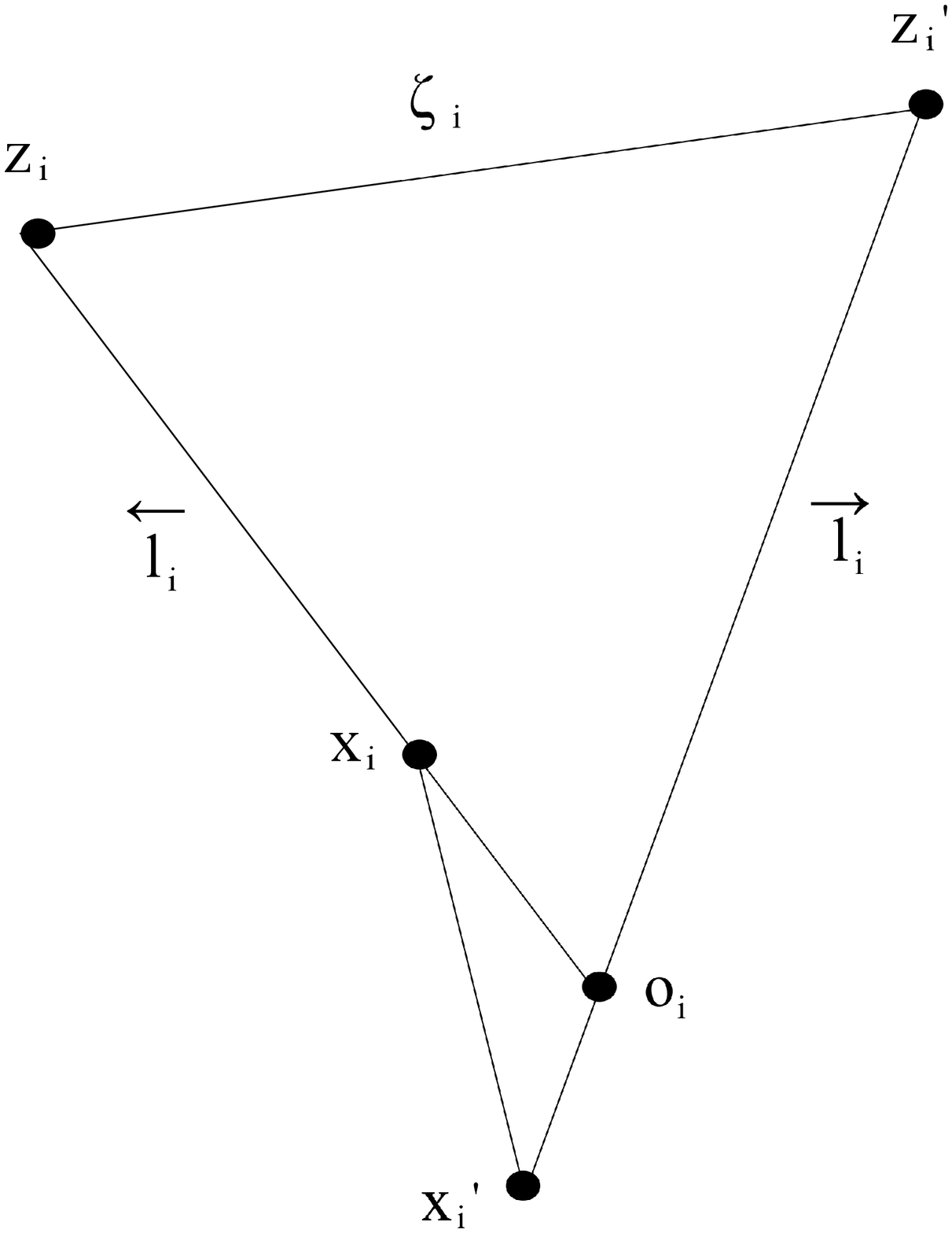}
 \centerline{Figure 2}
\end{center}

  Since $E_i$ has an induced flat metric, we can apply Euclidean geometry on it. By the sine law, we have 
  \begin{align*}
      d_{E_i}(\zeta_i,\tilde{A}\cap E_i) &\leq d_{E_i}(z_i,x_i)\leq d_{E_i}(z_i,o_i)+d_{E_i}(x_i,o_i)\\
      &=d_{E_i}(z_i,z_i')\frac{\sin{\angle z_iz_i'o_i}}{\sin{\angle z_io_iz_i'}}+d_{E_i}(x_i,x_i')\frac{\sin{\angle x_ix_i'o_i}}{\sin{\angle x_io_ix_i'}}\\
      &\le |\zeta_i|\frac{1}{\sin{c}}+D\frac{1}{\sin{c}}\\
      &=\csc{c}\cdot(|\zeta_i|+D)
  \end{align*}

  So we can modify the above $\gamma''$ by adding the shortest geodesic in $E_i$ from $\zeta_i$ to $\tilde{A}\cap E_i$ and its inverse, to get a new path $\gamma'''$. 

  Since $\gamma''$ contains all $\zeta_i$ as subpaths, we have $\sum_{i=1}^{k-1}|\zeta_i|\leq |\gamma''|$. So the length of $\gamma'''$ is bounded by a linear function on $n$, by the following estimate:
  \begin{align*}
      |\gamma'''|&\leq |\gamma''|+2\sum_{i=1}^{k-1}\csc{c}\cdot(|\zeta_i|+D)\\
      &\leq |\gamma''|+2\csc{c}\cdot \sum_{i=1}^{k-1}|\zeta_i|+2k\csc{c}\cdot D\\
      &\leq (1+2\csc{c})|\gamma''|+2k\csc{c}\cdot D\\
      &\leq \bigl [(1+2\csc{c})(\kappa^2 + 4\kappa^{4} R^2/\rho + 3\kappa^4 R^2)+2\csc{c}\cdot D\frac{\kappa^2}{\rho} \bigr ]n
  \end{align*}
  

Moreover, $\gamma'''$ can be rewritten as a concatenation of subpaths, such that each subpath connects two points in an elevation of $B_i$ for some $i$.

  Then the proof is done by following the same argument as in \cite{Ngu18a}.

\end{proof}

Now we are ready to prove the upper bound part of Theorem \ref{technical}.

\begin{proof}[Proof of Theorem~\ref{technical}, upper bound part]
  We delete those finite cover, $S^1$-bundle and geometrically finite pieces of $K$, denote their complement by $A$, and denote the components of $A$ by $A_1,\cdots, A_l$. For each $A_i$, let $S_{i1},\cdots,S_{im_i}$ be the components of $\Phi(H)$ that are contained in $A_i$. Then we have   $$\bigcup_{i=1}^l(\bigcup_{j=1}^{m_i}S_{ij})=\Phi(H).$$

  By Proposition \ref{firstreduction}, for any $n\in \mathbb{Z}_+$, there exists $i_1,\cdots,i_k\in \{1,\cdots,l\}$ and  $n_1,\cdots,n_k\in \mathbb{Z}_+$ such that $n_1+\cdots+n_k=n$ and  $$\Delta^G_H(n)\preceq \sum_{s=1}^k\Delta^G_{\pi_1(A_{i_s})}(n_s).$$

  Then by Proposition \ref{secondreduction}, (after replacing $\Phi(H)$ by $\Phi(H)'$, which does not affect the distortions) for each $s\in \{1,\cdots,k\}$, there exists $j_{s1},\cdots,j_{sl_s}\in \{1,\cdots,m_{i_s}\}$ and $m_{s1},\cdots,m_{sl_s}\in \mathbb{Z}_+$ such that $m_{s1}+\cdots+m_{sl_s}=n_s$ and $$\Delta^G_{\pi_1(A_{i_s})}(n_s)\preceq \sum_{t=1}^{l_s}\Delta^G_{\pi_1(S_{i_sj_{st}})}(m_{st}).$$

  So we have
  $$\Delta^G_H(n)\preceq \sum_{s=1}^k\Delta^G_{\pi_1(A_{i_s})}(n_s)\preceq \sum_{s=1}^k(\sum_{t=1}^{l_s}\Delta^G_{\pi_1(S_{i_sj_{st}})}(m_{st})).$$
  Since $\sum_{s=1}^k(\sum_{t=1}^{l_s}m_{st})=\sum_{s=1}^kn_s=n$, we get $$\Delta^G_H(n)\preceq \overline{\max\{\Delta^G_{\pi_1(S_i)}\ |\ S_i\text{\ is\ a\ component\ of\ }\Phi(H)\}} (n),$$ where the overline means the superadditive closure. So the proof is done.
\end{proof}

\bigskip
\bigskip

\section{Distortion of finitely generated subgroups in 3-manifold groups}
\label{sec:distortiongeometricmld}
In this section, we are going to prove Theorem \ref{toriboundary} and Theorem \ref{general}.

\begin{proof}[Proof of  Theorem \ref{toriboundary}]
At first, we pass to a finite cover of $M$, such that each Seifert piece $M_i$ of $M$ is homeomorphic to $F_i\times S^1$, and $M$ does not contain the twisted $I$-bundle over Klein bottle.

Let $f$ be the function given by Theorem~\ref{technical}.
If $\Phi(H)$ is empty then we use the convention that $f(n) =0$. Since the zero function is equivalent to a linear function by Definition~\ref{def:equivalentfunction}, it follows that $\Delta$ is linear.

We now assume that $\Phi(H)$ is non-empty. We recall that $\Phi(H)$ is a subsurface of $M_H$, and it is possible that $\Phi(H)$ is disconnected. By Theorem~\ref{technical}, it suffices to compute the distortion of each component $S_i$ of $\Phi(H)$ in $M$.  Note that by our modified definition of almost fiber surface (Definition \ref{modified}), each $S_i$ does not contain any annulus piece. 

We recall that each $S_i$ is a union of virtual fiber surfaces and partial fiber surfaces along circles. Each partial fiber surface $B_{j}$ contained in $S_i$ is mapped into a piece $M_{H,j}$ of $M_H$. Note that some boundary components of $B_{j}$ are mapped to the boundary of  $M_{H,j}$, and some boundary components of $B_{j}$ are mapped to the interior of $M_{H,j}$. However, only those boundary components of $B_{j}$ that are mapped to the boundary of  $M_{H,j}$ are important for the generalized spirality character (see Section~3.3 in \cite{Sun18}) and subgroup distortion. We remark here that it is possible that the surface $S_i$ is not a clean surface as defined in \cite{Ngu18a}, since $B_j$ may not be a proper subsurface in the corresponding piece of $M$ (c.f. Definition 3.8 (1) of \cite{Ngu18a}). Although the main theorem in \cite{Ngu18a} (Theorem~1.2) is stated for clean surfaces, the proof of the main theorem in \cite{Ngu18a} still holds for the surface $S_i$. Thus, we have the following:
\begin{enumerate}[label=(\alph*)]
    \item If $S_i$ contains a geometrically infinite piece and $H_{i} = \pi_1(S_i)$ is non-separable in $G = \pi_1(M)$ then $\Delta_{H_i}^{G} \sim e^{e^n}$.
    \item If $S_i$ contains a geometrically infinite piece and $H_i$ is seprable in $G$ then $\Delta_{H_i}^{G} \sim e^n$.
    \item Assume none of the above, if $S_i$ contains two adjacent pieces, then $\Delta_{H_i}^{G}$ is exponential if $H_i$ is non-seprable in $G$ and $\Delta_{H_i}^{G}$ is quadratic if $H_i$ is seprable in $G$.
    \item Otherwise, $\Delta_{H_i}^{G}$ is linear.
\end{enumerate}
The proof is obtained by combining (a), (b), (c), (d) and Theorem \ref{technical}.
\end{proof}

For the rest of this section, we are going to prove Proposition~\ref{prop:distgeome} and Theorem~\ref{general}. 

We first need several lemmas to compute subgroup distortion of 3-manifolds with Sol and $\widetilde{SL(2,\mathbb{R})}$ geometries. The subgroup distortion in these two cases should be well-known for experts, but we can not find the literature on it.

The following elementary lemma is useful for the proofs of both of the geometries.
\begin{lem}
\label{lem:easy}
Let $G$ and $H$ be finitely generated groups with generating sets $\mathcal{A}$ and $\mathcal{B}$ respectively. Let $\phi \colon G \to H$ be a homomorphism. Then there exists a positive number $L$ such that $\bigabs{\phi(g)}_{\mathcal{B}} \le L\abs{g}_{\mathcal{A}}$ for all $g$ in $G$.
\end{lem}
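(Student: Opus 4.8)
The statement to prove is Lemma~\ref{lem:easy}: for a homomorphism $\phi\colon G \to H$ between finitely generated groups with generating sets $\mathcal{A}$ and $\mathcal{B}$, there is $L>0$ with $\bigabs{\phi(g)}_{\mathcal{B}} \le L\abs{g}_{\mathcal{A}}$ for all $g \in G$.

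The plan is to exploit the fact that word length is the metric coming from a Cayley graph, so $\phi$ sends a generator to an element of bounded length, and lengths add up along a shortest word. First I would set $L := \max\{\,\bigabs{\phi(a)}_{\mathcal{B}} : a \in \mathcal{A}\,\}$, which is a well-defined positive number since $\mathcal{A}$ is finite (and one may harmlessly assume $L \ge 1$, or include $\mathcal{A}^{-1}$ in the max, noting $\bigabs{\phi(a^{-1})}_{\mathcal{B}} = \bigabs{\phi(a)^{-1}}_{\mathcal{B}} = \bigabs{\phi(a)}_{\mathcal{B}}$). Next, given $g \in G$ with $\abs{g}_{\mathcal{A}} = n$, write $g = a_1 a_2 \cdots a_n$ as a shortest word with each $a_i \in \mathcal{A} \cup \mathcal{A}^{-1}$. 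Since $\phi$ is a homomorphism, $\phi(g) = \phi(a_1)\phi(a_2)\cdots\phi(a_n)$, and by the triangle inequality for the word metric on $H$ together with the definition of $L$,
\[
\bigabs{\phi(g)}_{\mathcal{B}} \le \sum_{i=1}^{n} \bigabs{\phi(a_i)}_{\mathcal{B}} \le nL = L\,\abs{g}_{\mathcal{A}}.
\]
This handles all $g \ne e$; for $g = e$ both sides are $0$, so the inequality holds trivially.

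There is essentially no obstacle here — the only thing to be slightly careful about is the bookkeeping with inverses of generators (making sure $\mathcal{A}\cup\mathcal{A}^{-1}$ rather than just $\mathcal{A}$ is used when decomposing $g$, and that the maximum defining $L$ accounts for this, which it does automatically by the observation above). This lemma is a completely standard fact (a homomorphism is Lipschitz for word metrics), and it will be used in the geometric-case analysis that follows, e.g. to transfer distortion estimates along the surjections or inclusions arising for Sol and $\widetilde{SL(2,\R)}$ manifolds.
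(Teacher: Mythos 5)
Your proof is correct and uses exactly the same approach as the paper: define $L$ as the maximum of $\bigabs{\phi(a)}_{\mathcal{B}}$ over the finite generating set and then apply the homomorphism property together with the triangle inequality to a shortest word for $g$. You simply spell out the details that the paper leaves as "it is not hard to see."
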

\begin{proof}
Suppose that $\mathcal{A} = \{g_1,g_2,\dots ,g_n \}$ we define $L = \max \bigset{\abs{\phi(g_{i})}_{\mathcal{B}}}{i = 1,2,\dots,n}$. Since $\phi$ is a homomorphism, it is not hard to see that $\bigabs{\phi(g)}_{\mathcal{B}} \le L\abs{g}_{\mathcal{A}}$ for all $g \in G$.
\end{proof}

\begin{lem}[Subgroup distortion for Sol 3-manifolds]
\label{lem:distortioninSol}
Suppose that a 3-manifold $M$ has geometric structure modelled on Sol. We assume that $M$ is a torus bundle with Anosov monodromy (by passing to a double cover). Let $H$ be a finitely generated subgroup of $\pi_1(M)$. Then the distortion of $H$ in $\pi_1(M)$ is exponential if $H$ is an infinite subgroup in the fiber subgroup $\Z^2$ of $\pi_1(M)$, and it is linear otherwise.
\end{lem}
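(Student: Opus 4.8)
The plan is to work directly with an explicit model of $\pi_1(M)$. Since $M$ is a torus bundle with Anosov monodromy $A \in \SL(2,\Z)$, we have $\pi_1(M) \cong \Z^2 \rtimes_A \Z$, with the fiber subgroup $F = \Z^2$ normal and the base $\Z$ acting by $A$. Because $A$ is Anosov, it is diagonalizable over $\R$ with real eigenvalues $\lambda > 1 > \lambda^{-1} > 0$, and eigenvectors $v_+, v_-$ spanning $\R^2$. First I would dispose of the ``linear otherwise'' cases: if $H$ is finite it is trivial (torsion-free) hence undistorted; if $H$ is infinite but not contained in $F$, then $H$ surjects onto a finite-index subgroup of the base $\Z$ (via the projection $\pi_1(M) \to \Z$), and since $\pi_1(M)$ is a finitely generated (in fact, by standard facts, $\CAT(0)$-free but at least) group that is quasi-isometrically a Sol geometry, one shows $H$ has finite index in $\pi_1(M)$ or, more elementarily, that $H \cap F$ is undistorted in $H$ and $H$ is undistorted in $\pi_1(M)$; either way $\Delta_H^{\pi_1(M)}$ is linear. (The cleanest route: if $H \not\le F$, pick $h \in H$ projecting to a nonzero element of the base; then $h$ acts on $H \cap F$ hyperbolically, so $H$ contains a copy of $\Z^2 \rtimes \Z$ of finite index in $\pi_1(M)$ when $H\cap F$ has rank $2$, and when $H \cap F$ has rank $\le 1$ the group $H$ is virtually $\Z$ or $\Z^2$ and one checks undistortedness by hand.)

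The main content is the case $H \le F = \Z^2$ infinite, where I claim $\Delta$ is exponential. For the \emph{upper bound}, fix the standard generating set $\mathcal{S}$ of $\pi_1(M) = \Z^2 \rtimes_A \Z$ consisting of the two generators $a,b$ of $F$ together with $t$ (the base generator). Any $h \in H \le F$ with $|h|_{\mathcal{S}} \le n$ can be written as a word of length $\le n$ in $a^{\pm},b^{\pm},t^{\pm}$. Conjugating the $F$-letters out to the right using $t^{-1} a t = A(a)$ etc., one sees that $h$ equals a product of at most $n$ vectors in $\Z^2$, each of the form $A^{k}(e)$ with $|k| \le n$ and $e \in \{\pm e_1, \pm e_2\}$; hence $\|h\| \le n \cdot \|A^n\| \preceq \lambda^n$ in the Euclidean norm on $\Z^2$. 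Since $F = \Z^2$ is undistorted in itself with the Euclidean norm $\sim$ the word norm, and $H$ is undistorted in $F$ (every subgroup of $\Z^2$ is undistorted), we get $|h|_{\mathcal{A}} \preceq \|h\| \preceq \lambda^n$, an exponential upper bound. For the \emph{lower bound}, we must exhibit elements of $H$ realizing exponential distortion. Since $H$ is infinite, it contains a primitive vector $w \in \Z^2$; write $w = c_+ v_+ + c_- v_-$ in the eigenbasis. Because $A$ is Anosov (irrational eigen-directions), both $c_+ \ne 0$ and $c_- \ne 0$. Now consider $h_n := A^n(w) \in F$ — but $A^n(w)$ need not lie in $H$ unless $H$ is $A$-invariant, so instead I would use the commutator trick: $t^{-n} w t^{n} = A^{-n}(w)$ is conjugate to $w$, and the element $g_n = w \cdot (t^{-n} w^{-1} t^{n}) = w - A^{-n}(w) \in F$ — this again is not obviously in $H$. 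The correct device is: the distortion of $H$ in $\pi_1(M)$ is bounded below by the distortion of $H$ in $H \cdot \langle t \rangle \cong H \rtimes \Z$ (using Lemma~\ref{lem:preparationdist}, since $H \cdot \langle t\rangle$ — or a finite-index subgroup of it — is undistorted in $\pi_1(M)$ because the base direction is undistorted), and in $H \rtimes_{A} \Z$ we directly compute: for $h \in H$, $t^{-n} h t^{n} = A^n(h)$ has word length $\le 2n + |h|_{\mathcal{A}}$ in $H \rtimes \Z$, while $|A^n(h)|_{\mathcal{A}} \sim \|A^n(h)\| \ge |c_-| \lambda^n \|v_-\| - |c_+|\lambda^{-n}\|v_+\| \succeq \lambda^n$ once we choose $h$ with a nonzero $v_-$-component — which exists since $H$ is infinite and $v_-$ is irrational. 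This yields $\Delta_H^{\pi_1(M)}(2n + C) \succeq \lambda^n$, i.e.\ exponential.

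\textbf{The main obstacle.} The delicate point is the lower bound: one cannot simply take $A^n(w)$ since $H$ need not be $A$-invariant, so the argument must be routed through the subgroup $H \rtimes \Z$ and Lemma~\ref{lem:preparationdist}, and one must verify (i) that $H \cdot \langle t^k \rangle$ for a suitable $k$ is genuinely undistorted in $\pi_1(M)$ — this uses that the base $\Z$ is undistorted, which follows e.g.\ from $\pi_1(M)$ being quasi-isometric to Sol where horizontal/vertical lines are undistorted, or from an explicit retraction argument — and (ii) that \emph{every} infinite $H \le \Z^2$ contains an element whose image under $A^n$ has norm $\succeq \lambda^n$, which is exactly the statement that no nonzero vector of $\Z^2$ lies in the unstable eigenline (true because the eigenlines of an Anosov matrix have irrational slope). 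Both (i) and (ii) are standard but must be stated carefully; everything else is a routine norm estimate. Finally, the ``linear otherwise'' case when $H \not\le F$ deserves a clean uniform argument: the cleanest is to note $\pi_1(M)$ is quasi-isometric to the solvable Lie group Sol, in which any finitely generated subgroup is either virtually contained in the fiber plane (the exponential case) or acts with a quasi-isometrically embedded orbit, giving undistortedness; alternatively one argues algebraically via the short exact sequence and the fact that $\Z^2 \rtimes_A \Z$ with $A$ Anosov has no distorted finitely generated subgroups other than the infinite subgroups of the fiber.
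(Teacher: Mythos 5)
Your overall case division matches the paper's, which works through the covering space $M_H\to M$ and the topology of the induced fiber $\Sigma_H$ (plane, torus, or cylinder, in the circle-bundle and line-bundle cases). However, there are two genuine gaps in your argument, both caused by the same subtlety that your proposal flags but does not fully deploy: the eigen-directions of the Anosov matrix $A$ are irrational, so $A$ and every power $A^k$ have no nonzero rational eigenvector.

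First, in the ``linear otherwise'' case, your parenthetical claim that ``when $H\cap F$ has rank $\le 1$ the group $H$ is virtually $\Z$ or $\Z^2$ and one checks undistortedness by hand'' is not correct as stated. If $H\not\le F$ and $H\cap F$ had rank $1$, generated by $v\neq 0$, then any $h\in H$ projecting to a nonzero $t^k$ would conjugate $v$ to $A^k v\in\Z v$, forcing $v$ to be an eigenvector of $A^k$ with an integer eigenvalue --- impossible when $A$ is Anosov. So this rank-$1$ sub-case is \emph{vacuous}; the paper records this explicitly (its Case~1.3, ``$\Sigma_H$ is the cylinder''). This is not cosmetic: if such an $H$ existed it would be virtually $\Z^2$ but would contain, undistorted in itself, the infinite cyclic group $\Z v\le F$, which \emph{is} exponentially distorted in $\pi_1(M)$; hence $H$ would be distorted, contradicting the statement. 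The argument must rule the case out, not absorb it into ``undistorted by hand.''

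Second, in the exponential case, your lower bound routes through ``the distortion of $H$ in $H\cdot\langle t\rangle\cong H\rtimes\Z$.'' But $H\cdot\langle t^k\rangle$ is a subgroup only if $A^k(H)=H$. When $H\le F$ has rank $1$ this never happens for any $k$ --- again because $A^k$ has no rational eigenvector --- so there is no copy of $H\rtimes\Z$ inside $\pi_1(M)$, and your reduction via Lemma~\ref{lem:preparationdist} collapses precisely in the harder sub-case. (When $H$ has rank $2$ some power $A^k$ does preserve $H$, and your argument is fine there.) The paper sidesteps this by a direct geometric computation in the Sol metric $ds^2=e^{-2z}\,dx^2+e^{2z}\,dy^2+dz^2$: for $H=\Z b$, the lattice point $nb$ has Sol-distance $\sim\log n$ from the origin (go up in $z$ to contract one eigen-component, across, then down to contract the other), while its word length in $H$ is $n$, exhibiting exponential distortion with no normalizer hypothesis on $H$. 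Your upper bound estimate, by contrast, is correct and essentially the standard norm-growth computation.
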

\begin{proof}
We note that $G = \pi_1(M)$ is the semi-direct product $\Z^2 \rtimes_\phi \Z$ where $\phi \in GL_{2}(\Z)$ is the matrix corresponding to the monodromy. We also note that $\Z$ is undistorted in $G$ and $\Z^{2}$ is exponentially distorted in $G$.

Let $M_H \to M$ be the covering space corresponding to $H$. The bundle structure on $M$ induces a bundle structure on $M_H$ that is either a bundle over the circle or a bundle over the line.

{\bf{Case 1:}} $M_H$ is a bundle over the circle. Let $\varphi \colon M_H \to S^1$ be the induced bundle map on $M_H$, with fiber surface $\Sigma_H$. The only possibilities for $\Sigma_H$ are plane, torus, and cylinder. We consider the following sub-cases:

Case~1.1: $\Sigma_H$ is the plane. It follows that $\varphi_{*} \colon H = \pi_1(M_H) \to \Z$ is bijective. So $H$ is undistorted in $G$, by Lemma \ref{lem:easy}.

Case~1.2: $\Sigma_H$ is the torus. Then $M_H$ is compact and the covering map $M_H \to M$ is a finite cover. It follows that $H$ is a finite index subgroup of $G =\pi_1(M)$. Thus $H$ is undistorted in $G$.

Case~1.3: $\Sigma_H$ is the cylinder. We note that this case actually can not happen since the monodromy does not have any rational eigenvector.

{\bf{Case 2:}} $M_H$ is a bundle over the line. Let $\varphi \colon M_H \to \R$ be the induced bundle map on $M_H$, with fiber surface $\Sigma_H$. Again, the only possibilities for $\Sigma_H$ are plane, torus, and cylinder. We consider the following sub-cases:

Case~2.1: $\Sigma_H$ is the plane. In this case, $H =\pi_1(M_H)$ is the trivial subgroup, and thus $H$ is undistorted in $G$.

Case~2.2: $\Sigma_H$ is the torus. In this case, we note that $\pi_1(M_H)$ is a finite index subgroup of the $\Z^2$ of $G= \Z^2 \rtimes_\phi \Z$. Thus, $H$ is exponentially distorted in $G$.

Case~2.3: $\Sigma_H$ is the cylinder. Then $H =\pi_1(M)$ is an infinite cyclic group contained in the $\Z^2$ subgroup of $G= \Z^2 \rtimes_\phi \Z$. We will show that $H$ is exponentially distorted in $G$. Indeed, the universal cover $\title{M} = \R^3$ of $M$ has the Sol metric defined by $ds^2 = \frac{dx^2}{e^{2z}} + e^{2z}dy^2 + dz^2$. We recall that any point on the $x$ axis of distance $n$ from the origin can be connected to the origin via a geodesic in the hyperbolic plane in the $x$ and $z$ direction (with positive $z$-coordinate), such that the length of this geodesic is approximated $\ln(n)$. Similarly for the $y$--axis, but the geodesic travels down instead up. Let $H$ be generated by $b \in \Z^2$. For any $n \in \N$, we can connect the origin and $nb$ by two geodesics as above (one travels up and the other travels down) with length approximate $|b|\ln(n)$. Thus, $H$ is exponentially distorted in $G$.
\end{proof}

The next proposition takes care of 3-manifolds with the $\widetilde{SL(2,\mathbb{R})}$-geometry.
\begin{prop}
\label{prop:SL2}
Let $M \to \Sigma$ be a circle bundle over a compact, connected, orientable surface $\Sigma$ with $\chi(\Sigma) <0$. Let $H$ be a finitely generated subgroup of $G =\pi_1(M)$, then $H$ is undistorted in $G$.
\end{prop}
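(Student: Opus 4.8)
\textbf{Proof proposal for Proposition~\ref{prop:SL2}.}
The plan is to exploit the central extension
\[
1 \longrightarrow Z \longrightarrow G \stackrel{p}{\longrightarrow} Q \longrightarrow 1,
\]
where $Z\cong\Z$ is the subgroup carried by a regular fiber of the circle bundle (it is central because $M$ and $\Sigma$ are both orientable) and $Q=\pi_1(\Sigma)$ is a surface group with $\chi(\Sigma)<0$, and to reduce the distortion of $H$ in $G$ to that of its image $p(H)$ in $Q$, which is easy to control. I will use two input facts. First, the fiber subgroup $Z$ is undistorted in $G$: since $\chi(\Sigma)<0$ the group $Q$ is word hyperbolic, and by the boundedness criterion of Gersten and of Neumann--Reeves the center of a central extension of $Q$ by $\Z$ is undistorted exactly when the Euler class is bounded, which is automatic since every degree-two class of a hyperbolic group is bounded; alternatively $G$ is quasi-isometric to $\widetilde{SL(2,\R)}$ or to $\mathbb{H}^{2}\times\R$ and $Z$ is a quasigeodesic line there, which can be made explicit via a Gauss--Bonnet/linear-isoperimetric estimate in $\mathbb{H}^2$. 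Second, every finitely generated subgroup of a surface group is quasiconvex, hence undistorted (if $\Sigma$ has boundary then $Q$ is free; if $\Sigma$ is closed then a finitely generated subgroup is of finite index or free, and finitely generated free subgroups of closed surface groups are quasiconvex). I also record the elementary observations that a subgroup undistorted in $G$ is undistorted in every intermediate subgroup, and that undistortedness composes along a chain $A\le B\le C$.

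Now set $\bar H=p(H)\le Q$ and $P=p^{-1}(\bar H)=HZ$ (a subgroup, as $Z$ is central). \emph{Step 1: $P$ is undistorted in $G$.} Given $g\in P$, lift a $\bar H$-geodesic word for $p(g)$ to an element $g'\in P$ with $p(g')=p(g)$ and $|g'|_P\le|p(g)|_{\bar H}$; then $z:=g(g')^{-1}\in Z$ and
\[
|g|_P \;\le\; |z|_P+|g'|_P \;\lesssim\; |z|_G+|p(g)|_{\bar H} \;\lesssim\; |g|_G+|p(g)|_Q \;\lesssim\; |g|_G,
\]
using that $Z$ is undistorted in $P$ (from the first input fact), that $\bar H$ is undistorted in $Q$ (second input fact), and that $p$ is Lipschitz (Lemma~\ref{lem:easy}); hence $\Delta^{G}_{P}$ is linear. \emph{Step 2: $H$ is undistorted in $P$.} If $H\cap Z\neq 1$ then $H\cap Z$ has finite index in $Z$, so $[P:H]=[Z:H\cap Z]<\infty$ and $H$ is undistorted in $P$. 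If $H\cap Z=1$ then, $Z$ being central, the internal product $P=HZ$ is actually a direct product $H\times Z$, so $H$ is a retract of $P$ and is undistorted in it. Combining the two steps, $H$ is undistorted in $G$.

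The point where the hypothesis $\chi(\Sigma)<0$ is genuinely used — and the step I expect to be the main obstacle — is the first input fact, that the fiber subgroup is undistorted in $G$; the analogue fails for $\chi(\Sigma)=0$, since the center of a Nil $3$-manifold group is quadratically distorted, and it is precisely the word hyperbolicity (equivalently, the boundedness of every second cohomology class) of the base surface group that rescues the statement here. Everything else in the argument is formal, so once this fact is in hand the proposition follows.
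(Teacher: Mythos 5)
Your proof is correct, and it uses the same core ingredients as the paper's proof — namely that the fiber subgroup $Z$ is undistorted in $G$ (the paper's Lemma~\ref{lem:rfact}, which you re-derive via Gersten/Neumann--Reeves or via Rieffel's quasi-isometry, the latter being the paper's actual route) and that finitely generated subgroups of $\pi_1(\Sigma)$ are quasiconvex (Lemma~\ref{lem:quasiconvexsurface}). The genuine difference is organizational and worth noting: the paper splits into cases according to whether $M_H$ foliates in circles or in lines (equivalently, whether $H\cap Z$ is nontrivial or trivial), and then performs a direct length estimate in the circle case (Case~1.3) and invokes Lemma~\ref{lem:easy2} in the line case. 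You instead interpose a single auxiliary subgroup $P = HZ = p^{-1}(p(H))$ and factor the claim into ``$P$ undistorted in $G$'' plus ``$H$ undistorted in $P$.'' This packages the paper's Case~1.3 length computation as your Step~1 (applied once, to $P$, rather than case by case to $H$), and collapses the remaining case analysis into the elementary observation $[P:H] = [Z : H\cap Z]$, which is finite if $H\cap Z\neq 1$, while $P = H\times Z$ if $H\cap Z = 1$. The trade-off: the paper's version is more self-contained per case, whereas yours is shorter and makes transparent that the only nontrivial geometric input is the undistortion of $Z$ and the quasiconvexity of $p(H)$, with everything else purely group-theoretic. Your proof is also closer in spirit to Bridson's alternative argument (recorded in the paper), which likewise reduces to a retraction/quasi-retraction onto $H$ through its image in $\pi_1(\Sigma)$, though Bridson phrases it via virtual retractions rather than via the intermediate group $P$.

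One small expository caution in your Step~1: the chain of inequalities implicitly fixes a generating set for $P$ consisting of lifts of a generating set of $\bar H$ together with a generator of $Z$; stating that explicitly would make the bound $|g'|_P \le |p(g)|_{\bar H}$ immediate, and the rest then follows exactly as you indicate.
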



To prove Proposition \ref{prop:SL2}, we need a few lemmas.

The first lemma is well-known. The proof for the case of closed surfaces can be seen in Proposition 2 page 171 in \cite{NR93}. The proof for the case of surfaces with nonempty boundary is a corollary of Marshall Hall's Theorem.
\begin{lem}
\label{lem:quasiconvexsurface}
Let $S$ be a compact, connected, orientable surface with $\chi(S) <0$. Then every finitely generated subgroup of $\pi_1(S)$ is quasiconvex. In particular, this subgroup is undistorted in $\pi_1(S)$.
\end{lem}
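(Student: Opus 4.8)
The plan is to dispose of the two cases $\partial S = \emptyset$ and $\partial S \neq \emptyset$ separately, and then observe that quasiconvexity automatically yields undistortedness, which is the ``in particular'' clause.

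In the closed case, $\chi(S)<0$ forces $S$ to have genus $g\ge 2$, so after fixing a hyperbolic structure $\pi_1(S)$ is realized as a cocompact, torsion-free Fuchsian group $\Gamma<\PSL(2,\R)$; in particular $\pi_1(S)$ is word-hyperbolic. Let $H<\pi_1(S)$ be finitely generated. If $[\pi_1(S):H]<\infty$ then $H$ is quasi-isometrically embedded (finite index subgroups are), hence quasiconvex and undistorted. If $[\pi_1(S):H]=\infty$, then $H$ is a finitely generated Fuchsian group, hence geometrically finite by the classical finiteness theorems for Fuchsian groups; since $\Gamma$ is cocompact and torsion free, $H$ contains no parabolics, so $H$ is convex cocompact and therefore quasiconvex in $\pi_1(S)$. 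This is exactly Proposition~2 on p.~171 of \cite{NR93}, which I would cite rather than reprove.

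In the bounded case, $\chi(S)<0$ forces $\pi_1(S)$ to be a free group of rank $1-\chi(S)\ge 2$, in particular word-hyperbolic. Given a finitely generated subgroup $H$, Marshall Hall's theorem provides a finite index subgroup $K\le\pi_1(S)$ with $H\le K$ and $H$ a free factor of $K$. Then $K$ is quasiconvex in $\pi_1(S)$ (a finite index subgroup of a hyperbolic group is quasi-isometric to it, hence quasiconvex), and $H$, being a free factor, is a retract of $K$, hence quasiconvex in $K$. Since quasiconvexity is transitive in a word-hyperbolic group, $H$ is quasiconvex in $\pi_1(S)$. (Alternatively, one may bypass Hall's theorem: the Stallings core graph of a finitely generated $H<F_r$ is a finite graph immersed in the rose, and reading reduced words off this graph shows directly that $H$ is $\epsilon$-quasiconvex, where $\epsilon$ is the diameter of the core graph.)

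For the last assertion, a quasiconvex subgroup of a finitely generated group is quasi-isometrically embedded for the word metrics, hence undistorted: following a geodesic from $1$ to $h\in H$ and using $\epsilon$-quasiconvexity writes $h$ as a bounded-length product of elements of $H$ each of uniformly bounded ambient length, so $\abs{h}_{\mathcal A}$ is bounded by a linear function of $\abs{h}_{\mathcal S}$ (the easy reverse inequality is Lemma~\ref{lem:easy}). The only genuinely nontrivial input is the fact invoked in the closed case---that every finitely generated Fuchsian group is geometrically finite, equivalently that a finitely generated infinite-index subgroup of a closed surface group is convex cocompact---and this is the step I would expect to be the main obstacle if one insisted on a self-contained argument; here I would simply defer to \cite{NR93}.
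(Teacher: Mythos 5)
Your proof is correct and follows exactly the route the paper indicates: for closed $S$ you cite Proposition~2, p.~171 of \cite{NR93} (finitely generated Fuchsian groups are geometrically finite, hence convex cocompact in the torsion-free cocompact case), and for $\partial S\neq\emptyset$ you deduce quasiconvexity from Marshall Hall's theorem. The paper treats this lemma as well-known and gives precisely these two references without further elaboration, so your write-up is simply a fleshed-out version of the same argument.
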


\begin{lem}
\label{lem:rfact}
Let $S^1 \to M \to \Sigma$ be a circle bundle over a compact, connected, orientable surface $\Sigma$ with $\chi(\Sigma) <0$. Let \[
1 \to K \to \pi_1(M) \to \pi_1(\Sigma) \to 1
\] be the short exact sequence associated to the circle bundle where $K$ is the normal cyclic subgroup of $\pi_1(M)$ generated by a fiber. Then $K$ is undistorted in $\pi_1(M)$.
\end{lem}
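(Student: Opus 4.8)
\textbf{Proof plan for Lemma~\ref{lem:rfact}.}
The plan is to exploit the fact that $\pi_1(M)$ is a central extension of $\pi_1(\Sigma)$ by $K$, and that $\pi_1(\Sigma)$ is a hyperbolic group (since $\chi(\Sigma)<0$), so that the word length in $\pi_1(M)$ of a power $f^k$ of the fiber generator $f$ can be bounded below linearly in $|k|$. Concretely, fix a finite generating set $\mathcal{S}$ of $G=\pi_1(M)$ and let $\mathcal{S}'$ be its image in $Q=\pi_1(\Sigma)$. First I would observe that $K=\langle f\rangle$ is central in $G$: a fiber is carried around $\Sigma$ by any loop and, because $\Sigma$ is orientable and $M$ is orientable, the monodromy acts trivially on the fiber, so every element of $G$ commutes with $f$. (If $K$ is finite there is nothing to prove, so assume $K\cong\Z$.)

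Next I would set up the counting argument. Suppose $f^k = s_1 s_2 \cdots s_m$ is a geodesic word in $G$ with $m=|f^k|_{\mathcal{S}}$. Projecting to $Q$ gives $s_1' \cdots s_m' = 1$ in $Q$, i.e. a loop in the Cayley graph of $Q$ of length $m$. The key point is to recover $k$ from this loop: choose a set-theoretic section $\sigma\colon Q\to G$ and a cocycle $c\colon Q\times Q\to K\cong\Z$ representing the extension class, so that $s_1\cdots s_m$ differs from $\sigma(s_1'\cdots s_j')$-style partial products by a sum of cocycle values, and one reads off that $k = \sum_{j} c(\,\cdot\,,\,\cdot\,)$ along the loop, a sum of $m$ bounded terms; hence $|k|\le C m$ for a constant $C$ depending only on $\mathcal{S}$. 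Equivalently — and this is the cleaner route I would actually write — the Euler class of the bundle is a bounded $2$-cocycle on the hyperbolic group $Q$ (Gromov), so the value of this cocycle on a loop of length $m$ is $O(m)$; but that value is exactly the "algebraic fiber winding" $k$. Either way we get $|f^k|_{\mathcal{S}} = m \ge |k|/C$. Combined with the trivial upper bound $|f^k|_{\mathcal{S}} \le |k|\cdot|f|_{\mathcal{S}}$ from Lemma~\ref{lem:easy} (applied to the inclusion $K\hookrightarrow G$, or directly), this shows the inclusion $K\hookrightarrow G$ is a quasi-isometric embedding, i.e. $K$ is undistorted in $\pi_1(M)$.

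The main obstacle is making the "recover $k$ from the projected loop" step precise without getting bogged down in cocycle bookkeeping: one must be careful that the section $\sigma$ is chosen with bounded "defect" (e.g. $\sigma$ sends each $q$ to a fixed geodesic representative, or one works with the specific presentation of a circle-bundle group $\langle a_i,b_i,f \mid [a_i,f],[b_i,f], \prod[a_i,b_i]=f^e\rangle$ in the closed case, and the analogous presentation with free part in the bounded case). An alternative that sidesteps cocycles entirely: pass to a finite cover so that the bundle is trivial or so that $M$ fibers nicely, use that $K$ maps to a cyclic direct factor after such a cover, and invoke that distortion is preserved under finite covers; I would mention this as a fallback but carry out the Euler-cocycle argument as the main line, since it is self-contained given hyperbolicity of $\pi_1(\Sigma)$.
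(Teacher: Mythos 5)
Your proposal is correct, but it takes a genuinely different route from the paper. The paper's proof splits into two cases: if the sequence splits (after a finite cover), $K$ is a direct factor and there is nothing to prove; if not, $M$ carries $\widetilde{SL(2,\R)}$ geometry, and the paper invokes Rieffel's theorem \cite{Rie93} that there is a fiber-preserving bi-Lipschitz map $UT(\mathbb{H}^2)\to \mathbb{H}^2\times S^1$, lifting to a quasi-isometry $\widetilde{SL_2}\to\mathbb{H}^2\times\R$ carrying lines to lines, and then quotes invariance of distortion under quasi-isometry of pairs. Your bounded-Euler-cocycle argument instead proves undistortion directly: one reads off the fiber exponent $k$ of a word of length $m$ as a sum of $O(m)$ cocycle values, each bounded because the Euler class of the bundle admits a bounded representative on the hyperbolic group $\pi_1(\Sigma)$ (Milnor--Wood / Gromov). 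This is the standard Gersten-style criterion for undistortion of the center of a central extension, and it has the advantage of being self-contained modulo one well-known bounded-cohomology input, whereas the paper outsources the geometric content to Rieffel's classification theorem. Both are correct.

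One caution about the alternative you mention in passing: a nontrivial circle bundle over a \emph{closed} surface of negative Euler characteristic cannot be trivialized in any finite cover (the Euler number multiplies by the covering degree and remains nonzero), so "pass to a finite cover so the bundle is trivial" does not work in the non-split case. Your main Euler-cocycle line is the one to carry out; the fallback as stated would only handle the already-easy case $\partial\Sigma\neq\emptyset$ (where $\pi_1(\Sigma)$ is free and the sequence automatically splits).
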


\begin{proof}
Passing to finite cover if necessary, we only need to consider two cases:

Case~1: $\pi_1(M)$ is the product $\pi_1(\Sigma) \times K$. It is obvious that $K$ is undistorted in $\pi_1(M)$.

Case~2: The short exact sequence does not split, and thus $M$ has a geometry modelled on $\widetilde{SL(2,\mathbb{R})}$ that is an $\R$--bundle over $\mathbb{H}^2$. It is shown in \cite{Rie93} that there is a bi-lipschitz map from the unit tangent bundle $UT(\mathbb{H}^2) = \mathbb{H}^2 \times S^1$ of the hyperbolic plane to $\mathbb{H}^2 \times S^1$ that maps circles  to circles. By lifting to the universal cover, we have a quasi-isometry from $\widetilde{SL_2} \to \mathbb{H}^2 \times \R$ mapping lines to lines. Since subgroup distortion is invariant under quasi-isometry of pairs (see Definition~2.3 in \cite{Ngu18b}), and the $\R$ in $\mathbb{H}^2 \times \R$ is undistorted in $\mathbb{H}^2 \times \R$, it follows that the $\R$ in $\widetilde{SL_2}$ is undistorted. Thus, $K$ is undistorted in $\pi_1(M)$.
\end{proof}

\begin{lem}
\label{lem:easy2}
Let $G$ and $G'$ be two finitely generated groups. Let $H$ and $H'$ be two finitely generated subgroups of $G$ and $G'$ respectively. Suppose that $H'$ is undistorted in $G'$, and $\varphi \colon G \to G'$ is a homomorphism such that $\varphi_{|H} \colon H \to H'$ is an isomorphism. Then $H$ is undistorted in $G$.
\end{lem}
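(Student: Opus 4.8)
\textbf{Proof plan for Lemma~\ref{lem:easy2}.} The statement asserts that if $\varphi\colon G\to G'$ restricts to an isomorphism $H\xrightarrow{\sim} H'$ and $H'$ is undistorted in $G'$, then $H$ is undistorted in $G$. The plan is to bound the word length $\abs{h}_{\mathcal{A}}$ of an element $h\in H$ (with respect to a generating set $\mathcal{A}$ of $H$) from above by a linear function of $\abs{h}_{\mathcal{S}}$, where $\mathcal{S}$ is a generating set of $G$; the reverse inequality is automatic since $H\le G$. Fix finite generating sets $\mathcal{S}$ of $G$, $\mathcal{S}'$ of $G'$, $\mathcal{A}$ of $H$ and $\mathcal{A}'$ of $H'$.

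\smallskip

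First I would apply Lemma~\ref{lem:easy} to the homomorphism $\varphi\colon G\to G'$: there is a constant $L>0$ with $\abs{\varphi(g)}_{\mathcal{S}'}\le L\,\abs{g}_{\mathcal{S}}$ for all $g\in G$. Next, since $H'$ is undistorted in $G'$, there are constants $C,D$ with $\abs{h'}_{\mathcal{A}'}\le C\,\abs{h'}_{\mathcal{S}'}+D$ for all $h'\in H'$. Finally, since $\varphi|_H\colon H\to H'$ is an isomorphism, its inverse $\psi\colon H'\to H$ is a homomorphism between finitely generated groups, so a second application of Lemma~\ref{lem:easy} gives a constant $L'>0$ with $\abs{\psi(h')}_{\mathcal{A}}\le L'\,\abs{h'}_{\mathcal{A}'}$ for all $h'\in H'$.

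\smallskip

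Now chain these three inequalities. Given $h\in H$, set $h'=\varphi(h)\in H'$, so $h=\psi(h')$. Then
\[
\abs{h}_{\mathcal{A}}=\abs{\psi(h')}_{\mathcal{A}}\le L'\,\abs{h'}_{\mathcal{A}'}\le L'\bigl(C\,\abs{h'}_{\mathcal{S}'}+D\bigr)\le L'\bigl(CL\,\abs{h}_{\mathcal{S}}+D\bigr)=L'CL\,\abs{h}_{\mathcal{S}}+L'D.
\]
Thus $\Delta_H^G(n)\le L'CL\,n+L'D$, i.e. $\Delta_H^G$ is dominated by a linear function; combined with the trivial lower bound $\abs{h}_{\mathcal{S}}\le\abs{h}_{\mathcal{A}}$ (each generator of $H$ is a word in $\mathcal{S}$, up to a multiplicative constant), we conclude $\Delta_H^G$ is linear, i.e. $H$ is undistorted in $G$.

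\smallskip

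There is no real obstacle here; the only point requiring a moment's care is that $\varphi|_H$ being an isomorphism is exactly what lets us promote $\varphi$ (which a priori only gives one inequality) to a two-sided comparison, via the homomorphism $\psi=(\varphi|_H)^{-1}$ to which Lemma~\ref{lem:easy} applies again. The lemma is purely formal and will be invoked in the proof of Proposition~\ref{prop:SL2}, together with Lemmas~\ref{lem:quasiconvexsurface} and~\ref{lem:rfact}, to handle the $\widetilde{SL(2,\mathbb{R})}$ case.
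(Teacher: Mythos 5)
Your proof is correct and follows essentially the same approach as the paper: push forward along $\varphi$ using Lemma~\ref{lem:easy}, use the undistortedness of $H'$ in $G'$, then transfer back to $H$ via the isomorphism $\varphi|_H$. The only cosmetic difference is that the paper avoids your second application of Lemma~\ref{lem:easy} by choosing the generating set of $H$ to be $(\varphi|_H)^{-1}(\mathcal{A}')$, making the final transfer an equality of word lengths rather than an inequality.
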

\begin{proof}
Fix generating sets of $G$ and $G'$.
Let $L$ be the constant given by Lemma~\ref{lem:easy} with respect to the homomorphism $\varphi \colon G \to G'$.
Let $\mathcal{A}$ be a finite generating set of $H'$. Let $\mathcal{S} = (\varphi|_H)^{-1}(\mathcal{A})$, it follows that $\mathcal{S}$ is a finite generating set of $H$.
For any $n \in \mathbb{N}$, let $h$ be an arbitrary element of $H$ such that $|h|_{G} \le n$. By Lemma~\ref{lem:easy}, we have 
$$\bigabs{\varphi(h)}_{G'} \le L \bigabs {h}_{G} \le Ln.$$
Since $H'$ is undistorted in $G'$, it follows that the inclusion $H' \to G'$ is $(C,0)$--quasi-isometric embedding for some constant $C$. It follows that $\bigabs{\varphi(h)}_{\mathcal{A}} \le C \bigabs{\varphi(h)}_{G'}  \le CLn$. Thus $\bigabs{h}_{\mathcal{S}} = \bigabs{\varphi(h)}_{\mathcal{A}} \le CLn$. The lemma is confirmed. 
\end{proof}


\begin{proof}[Proof of Proposition~\ref{prop:SL2}]
We have the short exact sequence
$$1 \to \Z \to \pi_1(M) \to \pi_1(\Sigma) \to 1$$
Let $ M_H \to M$ be the covering space corresponding to the subgroup $H \le \pi_1(M)$. The foliation into circles of $f \colon M \to \Sigma$ lifts to a foliation into circles or lines in $M_H$, with the base surface $\Sigma_H$. 

{\bf{Case~1:}} $M_H$ foliates in circles.
We have the short exact sequence
\[
1 \to K  \to \pi_1(M_H) \to \pi_1(\Sigma_H) \to 1
\]
where $K$ is the cyclic subgroup of $H= \pi_1(M_H)$ generated by a fiber of $M_H$. 
If $\pi_1(\Sigma_H)$ has finite index in $\pi_1(\Sigma)$ then $H$ is a finite index subgroup  of $\pi_1(M)$, thus $H$ is undistorted in $\pi_1(M)$. We now consider the case $\pi_1(\Sigma_H)$ has infinite index in $\pi_1(\Sigma)$. If $\pi_1(\Sigma_H)$ is trivial then $\pi_1(M_H) \cong K$. Since the $\Z$ is undistorted in $\pi_1(M)$ by Lemma~\ref{lem:rfact}, it follows that $K$ is undistorted in $\pi_1(M)$, so does $H$. If $\pi_1(\Sigma_H)$ is non-trivial then $\pi_1(\Sigma_H)$ is free, and thus the short exact sequence $1 \to K \to H \to \pi_1(\Sigma_H) \to 1$ splits.
By passing to a finite index subgroup, and by abusing notation, we can assume that $H \cong \pi_1(\Sigma_H) \times K$.

We first choose generating sets for $\pi_1(\Sigma)$, $\pi_1(\Sigma_H)$, and $\pi_1(M)$. We can assume that the generating set of $\pi_1(\Sigma_H)$ is the subset of the generating set of $\pi_1(M)$.
Applying Lemma~\ref{lem:easy} to the surjection $f_{*} \colon \pi_1(M) \to \pi_1(\Sigma)$, we have a constant $L$ satisfies the property in Lemma~\ref{lem:easy}. We note that the restriction of ${f_{*}}_{|\pi_1(\Sigma_H)} \colon \pi_1(\Sigma_H) \to f_{*}(\pi_1(\Sigma_H))$ is an isomorphism. Since $f_{*}(\pi_1(\Sigma_H))$ is quasiconvex in $\pi_1(\Sigma)$ (see Lemma~\ref{lem:quasiconvexsurface}), it follows that the inclusion $f_{*}(\pi_1(\Sigma_H)) \to \pi_1(\Sigma)$ is a $(\epsilon, 0)$--quasi-isometric embedding (see Corollary~3.6 in \cite{BH99}). To see $H$ is undistorted in $\pi_1(M)$, let $h = xk$ be an arbitrary element in $H$ such that $x \in \pi_1(\Sigma)$, $k \in K$ and $|h|_{\pi_1(M)} \le n$. We have $|x|_{\pi_1(\Sigma_H)} = |f_{*}(h)|_{\pi_1(\Sigma_H)} \le \epsilon |f_{*}(h)|_{\pi_1(\Sigma)} \le \epsilon\,L |h|_{\pi_1(M)} \le \epsilon Ln$. It follows that $|k|_{\pi_1(M)} = |x^{-1}h|_{\pi_1(M)} \le |x|_{\pi_1(M)} + |h|_{\pi_1(M)} \le |x|_{\pi_1(\Sigma_H)} + |h|_{\pi_1(M)} \le \epsilon L n + n$. We recall that $K$ is undistorted in $\pi_1(M)$. Hence $|k|_{K} \le A(\epsilon L n+n)$ for some constant $A$ (does not depend on $k$). Thus $|h|_{H}= |x|_{\pi_1(\Sigma_H)} + |k|_{K}$ is bounded above by a linear function. (Here the equality holds since $H$ is a direct product of $K$ and $\pi_1(\Sigma_H)$ and we choose the generating set of $H$ accordingly.) In other words, $H$ is undistorted in $\pi_1(M)$.

{\bf{Case~2:}} $M_H$ foliates in lines.
In this case the homomorphism $\pi_1(M_H
) \to \pi_1(\Sigma_H)$ is a isomorphism.  It follows that the restriction of $f_{*} \colon \pi_1(M) \to \pi_1(\Sigma)$ to $H$ is an isomorphism from $H$ to $\pi_1(\Sigma_H)$. We note that $\pi_1(\Sigma_H)$ is quasiconvex in $\pi_1(\Sigma)$, by Lemma \ref{lem:quasiconvexsurface}. Then the fact that $H$ is undistorted in $\pi_1(M)$ follows from Lemma~\ref{lem:easy2}.
\end{proof}

After seeing an early version of this paper, Martin Bridson provided us an alternative proof of Proposition~\ref{prop:SL2} that is shorter and more elegant than our proof. We describe here Bridson's argument for Proposition~\ref{prop:SL2}. 

\begin{proof}[An alternative proof of Proposition~\ref{prop:SL2}]
Let $p_* \colon \pi_1(M)\to \pi_1(\Sigma)$ be the homomorphism induced by the Seifert fibration $p\colon M\to \Sigma$. Since $M$ is a Seifert manifold, the subgroup $H$ is an induced central cyclic extension of a subroup $K$ of $\pi_1(\Sigma)$. 

Case~1: The central cyclic subgroup is trivial. Then $p_*|_H \colon H\to \pi_1(\Sigma)$ is injective. If $K=p_*(H)$ has finite index in $\pi_1(\Sigma)$, then $M$ has the geometry of $\mathbb{H}^2 \times \R$ (in the sense of Thurston) and $H$ has finite index in a direct factor of $\pi_1(M)$, thus $H$ is undistorted in $\pi_1(M)$. If $K=p_*(H)$ has infinite index in $\pi_1(\Sigma)$, then $K$ is free and $K \le \pi_1(\Sigma)$ is a virtual retraction (i.e, there exists a finite index subgroup $V$ of $\pi_1(\Sigma)$ containing $K$ and a homomorphism $r \colon V \to K$ such that $r|_K$ is the identity). We then extend this virtual retraction to get a virtual retraction of $\pi_1(M)$ onto $H$ (note that $H \cong K$). Thus, $H$ is undistorted in $\pi_1(M)$.

Case~2: The central cylic subgroup is nontrivial, thus it is an infinite cyclic group. If $K$ has finite index in $\pi_1(\Sigma)$, then $H$ has finite index in $\pi_1(M)$, hence $H$ is undistorted in $\pi_1(M)$. If $K$ has infinite index in $\pi_1(\Sigma)$, then by passing to a subgroup of finite index we can assume that $H \cong K \times \Z$. By \cite{Sco73}, $K$ is the fundamental group of a subsurface $S_H$ of a finite cover $\Sigma'$ of $\Sigma$, and $S_H$ is a retract of $\Sigma'$. Let $M'$ be the finite cover of $M$ obtained by taking the pull-back bundle via $\Sigma'\to \Sigma$, then we have $H<\pi_1(M')$. By \cite{Rie93}, there exists a quasi-isometry $\pi_1(M') \to \pi_1(\Sigma') \times \Z$ that preserves the fiber $\Z$ subgroup and its projection to $\pi_1(\Sigma')$ is the identity (it can be viewed geometrically as a bi-lipschitz map between $\tilde M$ and $\tilde{\Sigma} \times \R$ that preserves fibers and equals identity on the base surface), and note that it is not a group homomorphism. So we have the following sequence of maps $$H \hookrightarrow \pi_1 (M') \to \pi_1(\Sigma') \times \Z \to K \times \Z \cong H,$$ which equals identity on $H$ (the second arrow is only a quasi-isometry, instead of a group homomorphism). In other words, $H \cong K \times \Z$ is a quasi-retract of $\pi_1(M)$, thus $H$ is undistorted in $\pi_1(M)$.
\end{proof}

It is well-known that finitely generated subgroups in Nil 3-manifolds are either linearly distorted or quadratically distorted (see \cite{Osin01}). The purpose of the following lemma is to  give a geometric description for determining which subgroups give linear and quadratic distortion. 
\begin{lem}[Subgroup distortion for Nil 3-manifolds]
Suppose that a 3-manifold $M$ has geometric structure modelled on Nil. We assume that $M$ is a circle bundle over torus (by passing to a finite cover). Let $H$ be a finitely generated subgroup of $\pi_1(M)$. Then the distortion of $H$ in $\pi_1(M)$ is quadratic if and only if $H$ has infinite index in $\pi_1(M)$ and $H$ intersects with the fiber subgroup nontrivially. Otherwise, the distorion is linear.
\end{lem}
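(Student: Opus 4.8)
The plan is to analyze the induced bundle structure on the cover $M_H \to M$, exactly as in the Sol case (Lemma~\ref{lem:distortioninSol}). Since we have passed to a finite cover, $M$ is a circle bundle over the torus $T^2$, so we have a short exact sequence $1 \to \Z \to \pi_1(M) \to \Z^2 \to 1$ where the $\Z$ is the fiber subgroup; this central extension is classified by a nonzero Euler number (otherwise $M$ would be $T^3$, a flat manifold), and $\pi_1(M)$ is the integer Heisenberg group up to commensurability. First I would record the standard facts that the fiber $\Z$ is quadratically distorted in $\pi_1(M)$ (the Heisenberg relation $[a,b]=c$ gives $c^{n^2}$ as a word of length $\sim 4n$ in $a,b$), while any $\Z^2$ complementary-type subgroup projecting isomorphically to a finite-index subgroup of the base $\Z^2$ is undistorted.

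\textbf{Case analysis via the cover.} Let $M_H \to M$ be the cover corresponding to $H$, with induced base surface $\Sigma_H$ (a cover of $T^2$) and fiber structure either circles or lines. If $M_H$ fibers in \emph{lines}, then $\pi_1(M_H) \to \pi_1(\Sigma_H)$ is an isomorphism, so $H$ injects into the base $\Z^2$; since $\Z^2$ is undistorted in $\pi_1(M)$ (it is the preimage of a line in the base, quasi-isometrically embedded — or one argues via Lemma~\ref{lem:easy2} using that $\pi_1(\Sigma_H)$ is quasiconvex, i.e. undistorted, in $\pi_1(T^2)=\Z^2$), $H$ is undistorted. Note that in the line case $H \cap \Z_{\text{fiber}}$ is trivial, consistent with the statement. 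If $M_H$ fibers in \emph{circles}, we get $1 \to K \to H \to \pi_1(\Sigma_H) \to 1$ with $K = H \cap \Z_{\text{fiber}}$ the (infinite cyclic, since the fiber is a circle) fiber subgroup of $M_H$. If $\Sigma_H$ has finite area, $H$ has finite index in $\pi_1(M)$ and is undistorted, and moreover it has finite index so it is not the infinite-index quadratic case. If $\Sigma_H$ is noncompact then $\pi_1(\Sigma_H)$ is free; the subcase $\pi_1(\Sigma_H)$ trivial gives $H \cong K \cong \Z$ sitting inside the fiber $\Z$, and the subcase $\pi_1(\Sigma_H)$ free nontrivial gives a split extension, so after passing to a further finite-index subgroup $H \cong \Z \times (\text{free group})$. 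In either of these last two subcases $H$ contains the fiber direction nontrivially and has infinite index; I would show directly that $H$ is quadratically distorted: the lower bound comes from restricting to the fiber $\Z < H$ which is already quadratically distorted in $\pi_1(M)$ (by Lemma~\ref{lem:preparationdist}, since $\Z$ is undistorted in $H$ — a central cyclic factor — and $H$ is... wait, we need $H$ undistorted in $G$ for that lemma, so instead one uses that the fiber element $c$ satisfies $|c^{n^2}|_G \preceq n$ directly, hence $\Delta_H^G(n) \succeq n^2$), and the upper bound is the general Osin bound, or follows because $\pi_1(M)$ itself has quadratic Dehn function / $\pi_1(M)$ is a nilpotent group of step $2$ so every subgroup has at most quadratic distortion.

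\textbf{Main obstacle.} The delicate point is the lower bound $\Delta_H^G \succeq n^2$ in the circle-fibered infinite-index case: one must exhibit elements $h_n \in H$ with $|h_n|_G \le Cn$ but $|h_n|_H \ge c n^2$. The natural candidates are the powers of a generator of $K = H \cap \Z_{\text{fiber}}$, say $h_n = c^{n^2}$ where $c$ is the (chosen) fiber generator of $M_H$; the issue is that $c$ as an element of the fiber $\Z$ of $M$ is some fixed power $c = z^d$ of the ambient fiber generator $z$, and one needs $|z^{dn^2}|_G \preceq n$, which is the Heisenberg estimate and is fine, but one also needs $|c^{n^2}|_H \succeq n^2$, i.e. that $c$ (equivalently $z^d$) is \emph{undistorted in $H$} — this holds because $K$ is a central subgroup of $H$ and $K$ is a direct factor after passing to finite index, so it is undistorted in $H$; hence $|c^m|_H \sim m$, giving $|h_n|_H \sim n^2$. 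Assembling this carefully, and confirming that the index-of-$H$ and intersection-with-fiber dichotomy matches the fibering-in-circles-vs-lines and compact-vs-noncompact-$\Sigma_H$ dichotomy, is the bulk of the work; the upper bound is immediate from step $2$ nilpotency, and the undistorted cases are routine given Lemma~\ref{lem:easy}, Lemma~\ref{lem:easy2}, and quasiconvexity of subgroups of $\Z^2$.
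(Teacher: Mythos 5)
Your proof follows essentially the same route as the paper's: pass to a finite cover so $M$ is an oriented circle bundle over $T^2$, look at the induced foliation of the cover $M_H$ into lines or circles, dispose of the line case and the finite-index circle case as undistorted, and in the remaining circle/infinite-index case get the quadratic lower bound from $K = H\cap \Z_{\mathrm{fiber}}$ being undistorted in $H$ (central, direct factor after passing to finite index) while $K$ is quadratically distorted in $\pi_1(M)$; the upper bound is Osin. Your discussion of the ``main obstacle'' is exactly the calculation hidden in the paper's chain $n^2 \preceq \Delta_K^{G} \preceq \Delta_H^{G}\circ\Delta_K^H$.

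One parenthetical in your line case is, however, false and would mislead a reader: the preimage in $\pi_1(M)$ of a cyclic direct factor of the base $\Z^2$ is indeed a subgroup isomorphic to $\Z^2$, but it contains the center $\langle c\rangle$ and is therefore \emph{quadratically} distorted (e.g.\ $c^{n^2}$ has $\pi_1(M)$-length $\sim n$ but $\Z^2$-length $n^2$), not quasi-isometrically embedded. That alternate route does not prove $H$ is undistorted; it would only give the useless upper bound $\Delta_H^G\preceq n^2$. The correct route — which you also state — is Lemma~\ref{lem:easy2} applied to the bundle projection $f_*\colon\pi_1(M)\to\Z^2$, using that $f_*|_H$ is an isomorphism onto $\pi_1(\Sigma_H)$ and that subgroups of $\Z^2$ are undistorted; this is what the paper does, and it is the claim you should keep while deleting the ``preimage of a line'' remark.
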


\begin{proof}
By taking a finite cover, we can assume that $M$ is an orientable circle bundle over the torus $T^2$. We have the short exact sequence
$$1 \to \Z \to \pi_1(M) \to \Z^2 \to 1$$
We note that the subgroup $\Z$ is quadratically distorted in $\pi_1(M)$. To see this, we write a presentation of $\pi_1(M)$ as $\langle k, a,b |[a,k] =1, [b,k]= 1, [a,b] =k^{e} \rangle$ where $e\in \Z\setminus \{0\}$ is the Euler number of the bundle and the $\Z$ subgroup is generated by $k$. It is easy to see that $\Z$ is quadratically distorted in $\pi_1(M)$ (for example, see Section~3.0.4 in \cite{Sisto}).
If $H$ has finite index in $\pi_1(M)$ then $H$ is undistorted in $\pi_1(M)$. In the rest of the proof, we will assume that $H$ has infinite index in $\pi_1(M)$.
Let $M_H \to M$ be the covering space corresponding to the subgroup $H < \pi_1(M)$. The foliation into circles of $f \colon M \to T^{2}$ lifts to a foliation into circles or lines in $M_H$, with base surface $\Sigma_H$. We note that $\pi_1(\Sigma_H)$ is undistorted in $\Z^2$ since $\Z^2$ is abelian (for example, see Proposition~8.98 in \cite{DK18}). We consider the following cases:

Case~1: $M_H$ foliates in lines. It follows that $H \cong \pi_1(\Sigma_H)$ and the restriction of the surjection $f_{*} \colon \pi_1(M) \to \pi_1(T^2)$ to $H$ is an isomorphism from $H$ to $\pi_1(\Sigma_H)$. Applying Lemma~\ref{lem:easy2} to $\varphi =f_{*}$, we have $H$ is undistorted in $\pi_1(M)$.

Case~2: $M_H$ foliates in circles. We have a short exact sequence
$$1 \to K \to H  \to \pi_1(\Sigma_H) \to 1$$ where $K$ is the cyclic subroup of $\pi_1(M_H)$ generated by a fiber of $M_H$. We note that $\Sigma_H$ is either a cylinder or plane ($\Sigma_H$ could not be a torus since $H$ has infinite index in $\pi_1(M)$). If $\Sigma_H$ is a plane, then $K \cong H$. Since the $\Z$ in $\pi_1(M)$ is quadratically distorted and $K$ is a finite index subgroup of $\Z$, it follows that $K$ is quadratically distorted in $\pi_1(M)$, and so does $H$. If $\Sigma_H$ is a cylinder, then by taking a finite index subgroup, we can assume that $H$ is the product $K \times \pi_1(\Sigma_H)$. We note that $K$ is undistored in $H$ and $K$ is quadratically distorted in $\pi_1(M)$. It follows that $$n^2 \preceq \Delta_{K}^{\pi_1(M)} \preceq \Delta_{H}^{\pi_1(M)} \circ \Delta^{H}_{K} \sim  \Delta_{H}^{\pi_1(M)}.$$ By \cite{Osin01}, $\Delta_{H}^{\pi_1(M)} \preceq n^2$ holds, so we have $n^2 \sim \Delta^{\pi_1(M)}_{H}$.

We note $H$ intersects the fiber subgroup $\Z$ of $\pi_1(M)$ trivially in Case~1 and nontrivially in Case~2. The lemma is confirmed.
\end{proof}

Now we are ready to prove  Proposition~\ref{prop:distgeome}.

\begin{proof}[Proof of Proposition~\ref{prop:distgeome}]
If the geometry of $M$ is either spherical, $S^{2} \times \R$, or Euclidean, then $G$ is virtually abelian. Since subgroup distortion is well behaved under taking finite index subgroup and subgroups in abelian groups are undistorted (for example, see Proposition~8.98 in \cite{DK18}), it follows that $\Delta_{H}^{G}$ is linear.

If the geometry of $M$ is hyperbolic, then it follows from the covering theorem (\cite{Can96}) and the subgroup tameness theorem (\cite{Agol04}, \cite{CG06}) that $\Delta_{H}^{G}$ is linear if $H$ is geometrically finite, and is exponential otherwise.

If the geometry of $M$ is Nil, it is well-known that the distortion $\Delta_{H}^{G}$ is either linear or quadratic (for example, see \cite{Osin01}).

If the geometry of $M$ is Sol then the distortion of $H$ in $\pi_1(M)$ is either linear or exponential by  Lemma~\ref{lem:distortioninSol}.

If the geometry of $M$ is $\mathbb{H}^{2} \times \R$, then $M$ is finitely covered by the trivial circle bundle $S \times S^1$ where $S$ is a hyperbolic surface. By Proposition~\ref{prop:SL2}, $\Delta_{H}^{G}$ is linear.

If the geometry of $M$ is $\widetilde{SL(2,\R)}$, then we claim that $\Delta_{H}^{G}$ is linear. Indeed, if $\partial M \neq \emptyset$, then $M$ has an $\mathbb{H}^2 \times \R$ structure, and thus $M$ is finitely covered by a trivial circle bundle over a surface with negative Euler characteristic. By Proposition~\ref{prop:SL2}, $\Delta_{H}^{G}$ is linear. In the case $M$ is closed, then $M$ is finitely covered by a nontrival circle bundle over a surface $S$ with $\chi(S) <0$. Again, by Proposition~\ref{prop:SL2}, we have that $\Delta_{H}^{G}$ is linear.
\end{proof}

We are now going to prove Theorem \ref{general}.

\begin{proof}[Proof of Theorem \ref{general}]
Since $\pi_1(M)$ is a finitely generated group, it follows from the Scott core theorem (\cite{Sco73}) that $M$ contains a compact codim-0 submanifold such that the inclusion map of the submanifold into $M$ is a homotopy equivalence. In particular, the inclusion induces an isomorphism on their fundamental groups. We therefore can assume that the manifold $M$ is compact. Since subgroup distortion does not change under taking finte index subgroups, we can assume that $M$ is orientable by passing to a double cover if necessary.

We can also assume that $M$ is irreducible and $\partial$-irreducible by the following reason:
Since $M$ is a compact, orientiable 3-manifold, it decomposes into irreducible, $\partial$-irreducible pieces $M_1, \cdots, M_k$  (by the sphere-disc decomposition).
In particular, $G =\pi_1(M)$ is a free product of fundamental groups of 3-manifolds $M_j$, i.e, $G =\pi_1(M_1) * \cdots *\pi_1(M_k)$. Let $G_i = \pi_1(M_i)$, and let $H$ be a finitely generated subgroup of $G$. It follows from Kurosh Theorem that $H \cong H_{1} * \cdots * H_{m} *F_{k}$ where $F_k$ is a free group and each $H_i$ is equals to $H \cap g_{i}G_{j_i}g_{i}^{-1}$ for some $g_i \in G$ and $j_i \in \{1,2, \cdots, k\}$. We remark here that $G$ is hyperbolic relative to the collection $\mathbb{P} = \{G_1, \cdots, G_k\}$, and $H$ is relative quasiconvex in $(G,\mathbb{P})$. According to Theorem~10.5 in \cite{Hru10}, the distortion function $\Delta_{H}^{G}$ equals the superadditive closure of the subgroup distortions of these finitely generated subgroups $H_i=H\cap g_iG_{j_i}g_i^{-1}< G_{j_i}$.  In other words, for the purpose of computing subgroup distortion, we only need to focus on the case where the manifold $M$ is compact, connected, orientable, irreducible, and $\partial$-irreducible.


For the rest, we will assume that $M$ is compact, orientable, irreducible 3-manifold. We are going to show the distortion of a finitely generated subgroup $H$ of $\pi_1(M)$ is either linear, quadratic, exponential or double exponential. Once this claim is established, we can conclude that the only possibility for subgroup distortion of finitely generated 3-manifold groups is linear, quadratic, exponential and double exponential.


{\bf{Case~1:}} $M$ has nontrivial torus decomposition.

Case~1.1: $M$ supports the Sol geometry. In this case, the distortion of $H$ in $\pi_1(M)$ is either linear or exponential by  Lemma~\ref{lem:distortioninSol}.

Case~1.2: $M$ does not support the Sol geometry. 

We first reduce to the case that $M$ has empty or tori boundary. Suppose that $M$ has a boundary component of genus at least $2$, by the proof in Section~6.3 of \cite{Sun18}, we can paste hyperbolic $3$-manifolds with totally geodesic boundaries to $M$ to get a $3$-manifold $N$ with empty or tori boundary. The new manifold $N$ satisfies the following properies:
\begin{enumerate}
    \item $M$ is a submanifold of $N$ with incompressible boundary.
    \item The torus decomposition of $M$ also gives the torus decomposition of $N$.
    \item Each piece of $M$ with a boundary component of genus at least $2$ is contained in a hyperbolic piece of $N$.
\end{enumerate}
So we have a sequence of subgroups $H<\pi_1(M)<\pi_1(N)$. Let $N_M$ be the covering space of $N$ corresponding to $\pi_1(M)<\pi_1(N)$, then $N_M$ has neither virtual fiber nor partial fiber pieces, since it only consists of finite cover pieces, geometrically finite pieces and simply connected pieces. Since $N$ has empty or tori boundary, Theorem \ref{toriboundary} implies that $\pi_1(M)$ is undistorted in $\pi_1(N)$. Then by a standard argument on distortions, we have that $\Delta^{\pi_1(M)}_H \sim \Delta^{\pi_1(N)}_H$ holds. Then by applying Theorem \ref{toriboundary} to $H<\pi_1(N)$, we have that $\Delta^{\pi_1(M)}_H \sim \Delta^{\pi_1(N)}_H$ can only be linear, quadratic, exponential or double exponential.

The distortion of a finitely generated subgroup $H$ in the fundamental group of a compact orientable irreducible $3$--manifold with empty or tori boundary and nontrivial torus decomposition has been addressed in  Theorem \ref{toriboundary}, and the only possibility of the distortion is linear, quadratic, exponential, and double exponential. 

{\bf{Case~2:}} $M$ has trivial torus decomposition. 

Case~2.1:  $M$ has empty or tori boundary.
In this case, $M$ has a geometric structure modelled on seven of the eight geometries: $S^{3}, \R^{3},  S^{2} \times \R, \mathbb{H}^{3},  Nil,   \mathbb{H}^{2} \times \R, \widetilde{SL(2,\R)}$. The distortion of $H$ in $\pi_1(M)$ is addressed in Proposition~\ref{prop:distgeome}.

Case~2.2: $M$ has a higher genus boundary. Then $M$ supports a geometrically finite hyperbolic structure with infinite volume. This case follows from a similar filling argument as in case~1.2: we paste hyperbolic 3-manifolds with totally geodesic boundaries to $M$ to get a finite volume hyperbolic $3$-manifold $N$. If $H<\pi_1(N)$ has finite index, then the distortion $\Delta^{\pi_1(M)}_H$ is linear. Otherwise, in the sequence of subgroups $H<\pi_1(M)<\pi_1(N)$, each group is an infinite index subgroup of the following one. Then $H$ must be a geometrically finite subgroup of $\pi_1(N)$.
This is true because of that $H$ has infinite index in $\pi_1(M)$, while $\pi_1(M)$ has infinite index in $\pi_1(N)$, while such a pattern can not happen if $H$ is a geometrically infinite (virtual fiber) subgroup of $\pi_1(N)$.
So $\Delta^{\pi_1(N)}_H$ must be linear, and so does $\Delta^{\pi_1(M)}_H$ (by Lemma \ref{lem:preparationdist}).
\end{proof}


\begin{thebibliography}{ZZZZ}
\bibitem[Agol04]{Agol04} {I.~Agol}, {\it Tameness of hyperbolic 3-manifolds}, \url{arXiv:math/0405568}.

\bibitem[Agol13]{Agol13} {I. Agol}, {\it The virtual Haken conjecture}, with an appendix by I. Agol, D. Groves, J.Manning,
Documenta Math. 18 (2013), 1045 - 1087.

\bibitem[BH99]{BH99} {M.~Bridson, A.~Haefliger}, {\it Metric spaces of non-positive curvature}, volume 319 of Grundlehren der Mathematischen Wissenschaften [Fundamental Principles of Mathematical Sciences]. Springer-Verlag, Berlin, 1999.

\bibitem[BN08]{BN08} {J.~Behrstock, W.~Neumann}, {\it Quasi-isometric classification of graph manifold groups}, Duke Math. J. 141 (2008), no. 2, 217 - 240.

\bibitem[CG06]{CG06} {D.~Calegari, D.~Gabai}, {\it Shrinkwrapping and the taming of hyperbolic 3-manifolds}, J. Amer. Math. Soc., 19(2):385-446, 2006.

\bibitem[Can96]{Can96} {R.~Canary}, {\it A covering theorem for hyperbolic 3-manifolds and its applications}, Topology, 35(3):751-778, 1996.

\bibitem[DK18]{DK18} {C.~Drutu, M.~Kapovich}, {\it Geometric group theory}, With an appendix by Bogdan Nica. American Mathematical Society Colloquium Publications, 63. American Mathematical Society, Providence, RI, 2018. xx+819 pp. ISBN: 978-1-4704-1104-6 

\bibitem[Hru10]{Hru10} {C.~Hruska}, {\it Relative hyperbolicity and relative quasiconvexity for countable groups}, Algebr. Geom. Topol. 10 (2010), no. 3, 1807 - 1856.

\bibitem[HK09]{HK09} {C.~Hruska, B.~Kleiner}, {\it Erratum to: "Hadamard spaces with isolated flats'' [Geom. Topol. 9 (2005), 1501 - 1538; MR2175151]}, Geom. Topol. 13 (2009), no. 2, 699 - 707. 

\bibitem[HN19]{HN19} {C.~Hruska, H.~Nguyen}, {\it Distortion of surfaces in graph manifolds}, Algebr. Geom. Topol.
Volume 19, Number 1 (2019), 363-395.

\bibitem[Leeb95]{Leeb95} {B.~Leeb}, {\it 3-manifolds with(out) metrics of nonpositive curvature}, Invent.Math., 122(2):277-289, 1995.

\bibitem[Liu17]{Liu17} {Y.~Liu}, {\it A characterization of virtually embedded subsurfaces in 3-manifolds},
Trans. Amer. Math. Soc., 369(2):1237-1264, 2017.

\bibitem[Ngu18a]{Ngu18a} {H.~Nguyen}, {\it Distortion of surfaces in $3$-manifolds}, Journal of Topology., 12(4):1115-1145, 2019.

\bibitem[Ngu18b]{Ngu18b} {H.~Nguyen}, {\it Quasi-isometry of pairs: surfaces in graph manifolds}, Internat. J. Algebra Comput., Vol. 29, No. 04, pp. 681-698 (2019).

\bibitem[NR93]{NR93}{G.~Niblo, M.~Roller, editors}, {\it Geometric group theory Vol. 1}, volume 181 of London Mathematical Society Lecture Note Series, Cambridge University Press, Cambridge, 1993.

\bibitem[Osin01]{Osin01} {D.~Osin}, {\it Subgroup distortions in nilpotent groups}, Comm. Algebra 29 (2001), no. 12, 5439 - 5463.

\bibitem[Pau05]{Pau05}{G.~Paulik}, {\it Gluing spaces and analysis}, Bonner Mathematische Schriften [Bonn
Mathematical Publications], 372. Universit¨at Bonn, Mathematisches Institut, Bonn, 2005. Dissertation, Rheinische Friedrich-Wilhelms-Universit¨at Bonn,
Bonn, 2005.

\bibitem[PW14]{PW14}{P.~Przytycki, D.~Wise}, {\it Separability of embedded surfaces in 3-manifolds}, Compos. Math., 150(9):1623-1630, 2014.

\bibitem[Rie93]{Rie93} {E. Rieffel}, {\it Groups coarse quasi-isometric to hyperbolic plane cross the real line}, PhD thesis, UCLA, 1993.

\bibitem[RW98]{RW98} {J.~Rubinstein, S~Wang}, {\it $\pi_1$-injective surfaces in graph manifolds}, Comment. Math. Helv., 73(4):499-515, 1998.

\bibitem[Sco73]{Sco73} {P.~Scott}, {\it Compact submanifolds of 3-manifolds}, J. London Math. Soc. (2) 7 (1973), no.3, 246 - 250.

\bibitem[Sisto]{Sisto}{A.~Sisto}, {\it Lectures note on geometric group theory}, \url{https://people.math.ethz.ch/~alsisto/LectureNotesGGT.pdf}.

\bibitem[Sun18]{Sun18} {H.~Sun}, {\it A characterization on separable subgroups of $3$-manifold groups}, \url{https://arxiv.org/abs/1805.08580}, to appear in Journal of Topology.
\end{thebibliography}
\end{document}